\documentclass{amsart}
\usepackage{amssymb}
\usepackage{amsmath}
\usepackage{amsthm}
\usepackage{mathrsfs}
\usepackage{verbatim}
\newtheorem{theorem}{Theorem}[section]

\newtheorem{lemma}[theorem]{Lemma}
\newtheorem{corollary}[theorem]{Corollary}
\newtheorem*{corollary*}{Corollary}
\theoremstyle{definition}
\newtheorem{definition}[theorem]{Definition}

\theoremstyle{remark}
\newtheorem{remark}[theorem]{Remark}
\numberwithin{equation}{section}
\title[Cauchy-Dirichlet problem for Complex Monge-Amp\`ere flows]{The Pluripotential Cauchy-Dirichlet problem for the Complex Monge-Amp\`ere flow with a general measure on the right-hand side}
\author{Bowoo Kang}
\address{Department of Mathematical Sciences, KAIST, 291 Daehak-ro, Yuseong-gu, Daejeon
34141, South Korea}
\email{bou704@kaist.ac.kr}
\begin{document}
\begin{abstract}
    We show that the pluripotential Cauchy-Dirichlet problem for the complex Monge-Amp\`ere flow is solvable for the right-hand side of the form $dt \wedge d\mu$ where $d\mu$ is dominated by a Monge-Amp\`ere measure of a bounded plurisubharmonic function. In particular, we remove the strict positivity assumption on $d\mu$. We use this result to prove the parabolic version of the bounded subsolution theorem due to Ko{\l}odziej in pluripotential theory.
\end{abstract}
\maketitle
\section{Introduction}
The weak solution theory of the complex Monge-Amp\`ere equation has been well developed, starting from the pioneering works by Bedford and Taylor in \cite{BT76, BT82}. In 2021, Guedj, Lu and Zeriahi \cite{GLZ21-2}  successfully extended this theory into the parabolic setting and proved the existence of a solution to the pluripotential Cauchy-Dirichlet problem for the complex Monge-Amp\`ere flow in $(0, T) \times \mathbb{B}$ and $C^{1,1}$ regularity of the solution (in $\mathbb
{B}$), where $0 < T < +\infty$, $\mathbb{B}$ is a unit ball in $\mathbb{C}^n$, measure on the right-hand side has a density function $g$ satisfying $\log g \in C^{1,1}(\overline{\mathbb
B})$ and the boundary data satisfies some conditions. This is the parabolic analogue of the famous result by Bedford and Taylor \cite{BT76}. The authors \cite{GLZ21-2} also proved the solvability for the Cauchy-Dirichlet problem on a pseudoconvex domain with the measure (on the right-hand side) having a strictly positive $L^p$ density function for some $p > 1$, which is analogous to the famous result by Ko\l odziej \cite{Ko98}. \\
\indent The interest on this parabolic PDE was first motivated by its relationship with the K\"ahler-Ricci flow. In 1985, Cao \cite{Cao85} reproved the Yau's solution to the Calabi conjecture by solving the complex Monge-Amp\`ere flow on a compact K\"ahler manifold and proving its long-time existence. Since then, various parabolic Monge-Amp\`ere equations have been extensively studied, including Monge-Amp\`ere flows, J-flows, inverse Monge-Amp\`ere flows and so on. Recently, Phong and T\^o \cite{PT21} introduced the notion of parabolic C-subsolution, originated from the work by Guan \cite{Gu14} and Sz\'ekelyhidi \cite{Sz18}, to give the solvability condition for large class of parabolic Monge-Amp\`ere equations on compact Hermitian manifolds. Moreover, Picard and Zhang \cite{PZ20} proved the solvability and long-time existence of more general parabolic Monge-Amp\`ere equations in compact K\"ahler manifolds without the concavity condition on the operator. Smith \cite{SM20} extended this result to the case of compact Hermitian manifolds.\\
\indent Let $\Omega \Subset \mathbb{C}^n$ be a bounded strictly pseudoconvex domain and $T > 0$. Let $\partial_0\Omega_T := ([0, T) \times \partial \Omega) \cup (\{0\} \times \Omega)$ denote the parabolic boundary of $\Omega_T := (0, T) \times \Omega$. The complex Monge-Amp\`ere flow on $\Omega_T$ is 
\begin{align}\label{MAF}\det\left(\frac{\partial^2u}{\partial z_j \partial \overline{z}_k}\right) = e^{\frac{\partial u}{\partial t} + F\left(t, z, u\right)}g,
\end{align}
where $(0, T) \ni t \mapsto u(t, z)$ is a smooth family of strictly plurisubharmonic functions in $\Omega$, $F \in C^{\infty}([0, T) \times \Omega \times \mathbb{R})$ is increasing in the third variable, and $g \in C^{\infty}(\Omega)$ is strictly positive.\\
\indent The Cauchy-Dirichlet problem for the complex Monge-Amp\`ere flow is finding $u$ such that
\begin{align}\label{KR}
    \begin{cases}
        u \text{ satisfies } (\ref{MAF}) \text{ in } \Omega_T, \\
        \lim_{(t, z) \rightarrow (\tau, \zeta)}u(t, z) = h(\tau, \zeta) \text{ for all } (\tau, \zeta) \in \partial_0\Omega_T,
    \end{cases}
\end{align}
where $h$ is a Cauchy-Dirichlet boundary data defined on $\partial_0\Omega_T$. Hou and Li \cite{HL10} proved that if $g \equiv 1$, $h$ is smooth and some compatibility conditions are satisfied, then (\ref{KR}) admits a unique smooth solution. Later, Do \cite{Do17a} proved that if $g \equiv 1$, $h$ is smooth on $[0, T) \times \partial \Omega$ and $h(0, \cdot)$ is bounded and plurisubharmonic on $\overline{\Omega}$, then (\ref{KR}) admits a unique smooth solution. He also proved the existence and properties of the weak solution of (\ref{KR}) when $h(0, \cdot)$ has a zero or positive Lelong number \cite{Do16, Do17b}. The viscosity solution for (\ref{KR}) was studied by Eyssidieux, Guedj and Zeriahi \cite{EGZ15} (and also in the manifold setting \cite{EGZ16}). They proved that if $g$ is continuous, $(h(0, \cdot), g)$ is admissible in some sense, and $h$ does not depend on $t$, then (\ref{KR}) admits a unique viscosity solution. Recently, Do, Le and T\^o \cite{DLT20} extended this result and proved that when $g$ and $h$ depends on time and some conditions are satisfied, (\ref{KR}) admits a unique viscosity solution. In this case, $g$ should satisfy $\{z \in \Omega \mid g(t, z) = 0\} \subset \{z \in \Omega \mid g(s, z) = 0\}$ for all $0 < s < t < T$. Later, Do and Pham \cite{DP22} proved the existence and uniqueness of the viscosity solution to (\ref{KR}) when the sets $\{z \in \Omega \mid g(t, z) = 0\}$ may be disjoint, under some additional conditions. In this paper, we will only consider the case when $g$ does not depend on $t$. \\
\indent We focus on the results of \cite{GLZ21-2} in $(0, T) \times \Omega$. From now, we fix $0 < T < +\infty$. Even though the authors \cite{GLZ21-2} extended their results to the case when $T = \infty$, we will only consider the finite maximal time case. They first proved the estimates of the first and second derivatives of the envelope of subsolutions with respect to the time variable. Using the approximation method with the time regularity of the envelope, they proved that the envelope is indeed the solution. \\
\indent We explain the basic setting of \cite{GLZ21-2} in more detail, which will be also the basic setting of this paper. Assume that $u : \Omega_T \rightarrow [-\infty, +\infty)$ is a given function. We say that $u$ is locally uniformly Lipschitz in $(0, T)$ if for any subinterval $J \Subset (0, T)$, there exists a constant $\kappa_J > 0$ satisfying
\begin{align}\label{local uniform}
    u(t, z) \leq u(s, z) + \kappa_J \lvert t-s\rvert \text{ for all } t, s \in J \text{ and for all } z \in \Omega.
\end{align}
Similarly, the family $\mathcal{A}$ of functions mapping $\Omega_T$ to $[-\infty, +\infty)$ is said to be locally uniformly Lipschitz in $(0, T)$ if for any subinterval $J \Subset (0, T)$, there exists a constant $\kappa_J > 0$ satisfying (\ref{local uniform}) for all $u \in \mathcal{A}$. \\
\indent We say that $u$ is a \textit{parabolic potential} if
\begin{itemize}
    \item the map $u(t, \cdot) : z \mapsto u(t, z)$ is plurisubharmonic in $\Omega$;
    \item $u$ is locally uniformly Lipschitz in $(0, T)$.
\end{itemize}
We denote $\mathcal{P}(\Omega_T)$ the set of parabolic potentials on $\Omega_T$. If $u \in \mathcal{P}(\Omega_T) \cap L^{\infty}_{loc}(\Omega_T)$, then $dt \wedge (dd^cu)^n$ can be defined as a positive Radon measure on $\Omega_T$ \cite[Definition 2.2]{GLZ21-2}. \\
\indent We consider the following family of Monge-Amp\`ere flows
\begin{align*}
    \tag{CMAF} dt \wedge (dd^cu)^n = e^{\frac{\partial u}{\partial t}+F(t, z, u)}dt \wedge d\mu \text{ in } \Omega_T,
\end{align*}
where $\mu$ is a positive Borel measure on $\Omega$ satisfying $PSH(\Omega) \subset L^1_{loc}(\Omega, \mu)$, $F(t, z, r) : [0, T) \times \Omega \times \mathbb{R} \rightarrow \mathbb{R}$ is
\begin{itemize}
    \item continuous in $[0, T) \times \Omega \times \mathbb{R}$;
    \item increasing in $r$;
    \item locally uniformly Lipschitz in $(t, r)$;
    \item locally uniformly semi-convex in $(t, r)$,
\end{itemize}
and $u \in \mathcal{P}(\Omega_T) \cap L^{\infty}(\Omega_T)$ is the unknown function. As usual, $d = \partial + \overline{\partial}$ and $d^c = i(\overline{\partial} - \partial)$ so that $dd^c = 2i\partial \overline{\partial}$. Note that we understand (CMAF) in the pluripotential sense as defined in \cite{GLZ21-2}.\\
\indent A \textit{Cauchy-Dirichlet boundary data} is a function $h$ defined on $\partial_0\Omega_T$. We assume that $h$ is bounded and upper semi-continuous on $\partial_0\Omega_T$, and
\begin{itemize}
    \item $h\mid_{[0, T) \times \partial \Omega}$ is continuous;
    \item $h(\cdot, z)$ is locally uniformly Lipschitz in $(0, T)$ for each $z \in \partial\Omega$;
    \item $t\left\vert \frac{\partial h}{\partial t}(t, z)\right\vert \leq \kappa_h$ for all $(t, z) \in (0, T) \times \partial \Omega$ for some constant $\kappa_h > 0$;
    \item $\frac{\partial^2 h}{\partial t^2}(t, z) \leq C_ht^{-2}$ in the sense of distributions in $(0, T)$ for some $C_h > 0$;
    \item $h_0 := h(0, \cdot)$ is bounded, plurisubharmonic in $\Omega$, and satisfies
    \begin{align*}
        \lim_{\Omega \ni z \rightarrow \zeta}h(0, z) = h(0, \zeta) \text{ for all } \zeta \in \partial \Omega.
    \end{align*}
\end{itemize}
\indent \indent We fix some further notations and expressions which will be frequently used throughout this paper. First, we fix two such functions $F$ and $h$ introduced above. We set two constants related to $F$ and $h$ as follows:
\begin{itemize}
    \item $M_h := \sup_{\partial_0\Omega_T}\lvert h\rvert;$
    \item $M_F := \sup_{\Omega_T}F(\cdot, \cdot, M_h).$
\end{itemize}
Next, for a given sequence of Borel measures $\{\nu_j\}_{j = 1}^{\infty}$, we will say for convenience that $\nu_j \rightarrow \nu$ weakly as $j \rightarrow \infty$ for some Borel measure $\nu$, when the convergence is in the weak sense of Radon measures. We let $dV = 4^nn!\beta_n$ for $\beta_n = \left(\frac{i}{2}\right)^n\prod_{j = 1}^n dz_j \wedge d \overline{z}_j$ so that $(dd^c\varphi)^n = \det\left(\frac{\partial ^2 \varphi}{\partial z_j\partial \overline{z}_k}\right)dV$ if $\varphi \in C^2(\Omega)$. We let $d\ell$ denote the Lebesgue measure on $\mathbb{R}$.\\
\indent The Cauchy-Dirichlet problem for (CMAF) with the Cauchy-Dirichlet boundary data $h$ is finding $u \in \mathcal{P}(\Omega_T) \cap L^{\infty}(\Omega_T)$ such that
\begin{align}\label{CD}
    \begin{cases}
        &u \text{ satisfies (CMAF) in the pluripotential sense}, \\
        &\lim_{(t, z) \rightarrow (\tau, \zeta)}u(t, z) = h(\tau, \zeta) \text{ for all } (\tau, \zeta) \in [0, T) \times \partial \Omega, \\
        &\lim_{t \rightarrow 0^+}u(t, \cdot) = h_0 \text{ in } L^1(\Omega, d\mu).
    \end{cases}
\end{align}
Guedj, Lu and Zeriahi \cite{GLZ21-2} established a parabolic pluripotential theory and proved the existence and uniqueness of the solution of (\ref{CD}) when $d\mu = gdV$ for some $g \in L^p(\Omega)$ where $p > 1$ and $g > 0$ almost everywhere. They also proved that the pluripotential and viscosity solutions are equivalent under some conditions \cite{GLZ21-1}. We would like to mention that Guedj, Lu and Zeriahi \cite{GLZ20} also proved analogous results in the manifold setting, and Dang \cite{Da22} recently studied the pluripotential Monge-Amp\`ere flows in big cohomology class by extending the results in \cite{GLZ20}. \\
\indent Now, let us state our results. Our goal is to extend the results of \cite{GLZ21-2} for more general measures on the right-hand side, which may not be strictly positive or absolutely continuous with respect to the Lebesgue measure. The main result is as follows:
\begin{theorem}
    If there exists a function $\varphi \in PSH(\Omega) \cap L^{\infty}(\Omega)$ satisfying
    \begin{align*}
        \begin{cases}
            &(dd^c\varphi)^n \geq d\mu \text{ in } \Omega,\\
            &\lim_{z \rightarrow \partial \Omega}\varphi(z) = 0,
        \end{cases}
    \end{align*}
    then there exists a function $u \in \mathcal{P}(\Omega_T) \cap L^{\infty}(\Omega_T)$ satisfying
    \begin{align*}
        \begin{cases}
            &dt \wedge (dd^cu)^n = e^{\frac{\partial u}{\partial t}+F(t, z, u)}dt \wedge d\mu \text{ in } \Omega_T, \\
            &\lim_{(t, z) \rightarrow (\tau, \zeta)}u(t, z) = h(\tau, \zeta) \text{ for all } (\tau, \zeta) \in [0, T) \times \partial \Omega,\\
            &\lim_{t \rightarrow 0^+}u(t, \cdot) = h_0 \text{ in } L^1(\Omega, \mu).
        \end{cases}
    \end{align*}
\end{theorem}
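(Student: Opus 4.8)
The plan is to produce $u$ as a limit of solutions of non-degenerate flows already covered by \cite{GLZ21-2}, while keeping uniform control in three respects: a global $L^\infty$ bound, the time-regularity, and — the essential point — the capacity of the spatial slices, so that one can pass to the limit in the pluripotential equation.

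\emph{Step 1 (regularizing the measure).} I would first replace $\varphi$ by the solution $\psi\in PSH(\Omega)\cap L^\infty(\Omega)$ of the elliptic Dirichlet problem $(dd^c\psi)^n=d\mu$, $\lim_{z\to\partial\Omega}\psi(z)=0$, which exists by Ko\l odziej's bounded subsolution theorem \cite{Ko98} applied with the subsolution $\varphi$; the comparison principle gives $\varphi\le\psi\le 0$, hence $\|\psi\|_\infty\le\|\varphi\|_\infty$. Fixing $\rho\in PSH(\Omega)\cap C^\infty(\overline\Omega)$ with $\rho|_{\partial\Omega}=0$ and $(dd^c\rho)^n\ge dV$, and a decreasing sequence of smooth strictly plurisubharmonic $\psi_j\downarrow\psi$ on $\Omega$ with $\psi_j\to 0$ at $\partial\Omega$ uniformly in $j$ (a routine regularization, with a little splicing near $\partial\Omega$ so that $(dd^c\psi_j)^n$ has density in $L^p(\Omega)$ up to the boundary), I would set
\[
d\mu_j:=(dd^c\psi_j)^n+\tfrac1j\,dV=g_j\,dV,\qquad \varphi_j:=\psi_j+\rho.
\]
Then $g_j>0$ a.e.\ and $g_j\in L^p(\Omega)$ for every $p$, $\sup_j\mu_j(\Omega)<\infty$, $d\mu_j\to d\mu$ weakly by the Bedford-Taylor convergence theorem, and $(dd^c\varphi_j)^n\ge(dd^c\psi_j)^n+(dd^c\rho)^n\ge d\mu_j$, with $\sup_j\|\varphi_j\|_\infty<\infty$ and $\varphi_j\to 0$ at $\partial\Omega$ uniformly in $j$.

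\emph{Step 2 (the approximating flows and uniform estimates).} For each $j$, \cite{GLZ21-2} produces $u_j\in\mathcal P(\Omega_T)\cap L^\infty(\Omega_T)$ solving (\ref{CD}) with $d\mu_j$ in place of $d\mu$. I would then establish two uniform estimates. First, $\sup_j\|u_j\|_{L^\infty(\Omega_T)}<\infty$: trap $u_j$ between the ($j$-independent) supersolution furnished by the parabolic envelope of $h$ from above and, from below, barriers of the form $A\varphi_j$ plus a $j$-independent term, where $A$ depends only on $M_F,M_h,\kappa_h,T$ so that $A^n$ dominates the exponential factor; these are genuine sub-/supersolutions of the $\mu_j$-problem, so the comparison principle of \cite{GLZ21-2} applies, and the bound is uniform precisely because $\sup_j\|\varphi_j\|_\infty<\infty$. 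Second, the family $\{u_j\}$ is locally uniformly Lipschitz in $(0,T)$, with a one-sided bound $\partial_t^2u_j\le Ct^{-2}$ ($C$ independent of $j$) on the second time-derivative; this is because the first- and second-order time estimates of \cite{GLZ21-2} depend only on $F$ and $h$ and not on the density, and can alternatively be read off by comparing $u_j$ with time-shifted versions of the barriers above. The same barriers, together with $\varphi_j\to 0$ at $\partial\Omega$ uniformly in $j$, give $j$-uniform control of the boundary and initial behavior.

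\emph{Step 3 (passing to the limit) — the main obstacle.} By Step 2 and compactness of uniformly bounded plurisubharmonic families, a subsequence of $\{u_j\}$ converges in $L^1_{loc}(\Omega_T)$, and a.e., to some $u\in\mathcal P(\Omega_T)\cap L^\infty(\Omega_T)$ trapped between the limits of the barriers; this forces $\lim_{(t,z)\to(\tau,\zeta)}u(t,z)=h(\tau,\zeta)$ on $[0,T)\times\partial\Omega$ and, by dominated convergence against the finite measure $d\mu$, $\lim_{t\to0^+}u(t,\cdot)=h_0$ in $L^1(\Omega,\mu)$. What remains, and what I expect to be the hard part, is the equation. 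Since $L^1_{loc}$ convergence of plurisubharmonic functions does not force convergence of Monge-Amp\`ere operators, one must upgrade it to convergence of $u_j(t,\cdot)$ in capacity for a.e.\ $t$; I would derive this from a parabolic analogue of Ko\l odziej's stability estimate, using exactly that the right-hand sides $e^{\partial_tu_j+F(t,z,u_j)}\,dt\wedge d\mu_j$ are, up to a fixed multiplicative constant, dominated by $dt\wedge(dd^c\varphi_j)^n$ with $\sup_j\|\varphi_j\|_\infty<\infty$ and converge weakly to $e^{\partial_tu+F(t,z,u)}\,dt\wedge d\mu$ — this domination is the substitute for the strict positivity assumption of \cite{GLZ21-2}. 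Given convergence in capacity, $dt\wedge(dd^cu_j)^n\to dt\wedge(dd^cu)^n$ weakly. For the right-hand side, the one-sided bound on $\partial_t^2u_j$ makes $\partial_tu_j$ locally uniformly of bounded variation in $t$, hence $\partial_tu_j\to\partial_tu$ a.e.; combined with the continuity of $F$ this gives $e^{\partial_tu_j+F(t,z,u_j)}\to e^{\partial_tu+F(t,z,u)}$ a.e.\ and boundedly on compacts, and the uniform domination furnishes the equi-integrability needed to pass to the limit in the product against $d\mu_j$. Equating the two limits shows that $u$ satisfies (CMAF) in the pluripotential sense, which completes the proof.
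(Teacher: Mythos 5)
Your overall strategy (solve non-degenerate flows via \cite{GLZ21-2} with regularized measures, prove $j$-uniform $L^\infty$ and time estimates, pass to the limit) is the same as the paper's, and Steps 1--2 are essentially sound, but the proposal has a genuine gap exactly at the point you yourself flag as the hard part. In Step 3 you pass to the limit in the right-hand side by arguing that $\partial_t u_j \to \partial_t u$ a.e.\ (from the one-sided bound on $\partial_t^2 u_j$), hence $e^{\partial_t u_j+F(t,z,u_j)}\to e^{\partial_t u+F(t,z,u)}$ a.e., and that ``uniform domination furnishes the equi-integrability needed to pass to the limit in the product against $d\mu_j$.'' This is not a valid convergence mechanism: Vitali/equi-integrability arguments require a \emph{fixed} measure, whereas here the measures $d\mu_j$ only converge weakly and may charge small sets where the pointwise convergence fails (``a.e.''\ with respect to which measure?). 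Pointwise a.e.\ convergence of integrands plus weak convergence of measures plus uniform bounds does not imply convergence of $\int \chi\, e^{\partial_t u_j+F}\,dt\wedge d\mu_j$. This is precisely why the paper needs its technical core (Lemma 3.3): there the time derivative is linearized by difference quotients $\delta^{-1}(u(t+\delta,\cdot)-u(t,\cdot))$, the replacement errors are controlled one-sidedly by the semi-concavity and the bound $t|\partial_t u_j|\le\kappa_0$ and integrated against $(dd^c\psi_j)^n$ with uniform Chern--Levine--Nirenberg bounds, and the remaining term is handled by quasi-continuity of the difference quotients together with the Bedford--Taylor/Xing type convergence theorem (\cite[Lemma 4.21]{GZ17}) against $(dd^c\psi_j)^n-(dd^c\psi)^n$; in addition the exceptional sets for $\partial_t u_j$ must be shown negligible for \emph{all} measures $dt\wedge(dd^c\varphi)^n$ with $\varphi$ bounded psh (Lemmas 2.7 and 2.9), not just Lebesgue-negligible. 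None of this is replaced by your equi-integrability remark.

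Two further steps are asserted rather than proved. First, the slice-wise convergence of $u_j(t,\cdot)$ in capacity is attributed to ``a parabolic analogue of Ko\l odziej's stability estimate,'' which does not exist in the generality you need (the paper explicitly notes that the stability/comparison results of \cite{GLZ21-2} require a strictly positive $L^p$ density); the paper instead obtains it slice-wise from the elliptic theory: the domination $(dd^cu_j(t,\cdot))^n\le e^{\kappa_0/t+M_F}(dd^c\tilde\psi_j)^n$, capacity convergence $\tilde\psi_j\to\psi$ coming from Ko\l odziej's \emph{elliptic} stability estimate, uniform boundary control from the barriers, and Xing's convergence theorem \cite[Theorem 2.14]{Xi08}. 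Your domination idea is the right substitute for strict positivity, but as written it is a placeholder, not an argument. Second, for the Cauchy condition you claim $u$ is ``trapped between the limits of the barriers'' and conclude $\lim_{t\to0^+}u(t,\cdot)=h_0$ in $L^1(\Omega,\mu)$ by dominated convergence; only the lower bound can be obtained by a barrier. A pointwise upper barrier forcing $\limsup_{t\to0^+}u(t,z)\le h_0(z)$ cannot exist in general (see the paper's Lemma 5.1, where $\mu=0$ forces $u\equiv0$ while $h_0<0$ inside $\Omega$); the upper bound holds only $\mu$-a.e.\ and is proved in the paper through the integral inequality of \cite[Lemma 3.10]{GLZ21-2} passed to the limit in capacity, followed by a weak-$L^2$ compactness argument and \cite{KN23} to get $L^1(\Omega,d\mu)$ convergence. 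So the architecture of your proof matches the paper, but the three decisive analytic steps — the convergence lemma for the varying right-hand sides, the slice-wise capacity convergence, and the $\mu$-a.e.\ upper bound at $t=0$ — are missing or rest on arguments that would fail as stated.
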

This theorem implies the following corollary, which is the parabolic version of the Ko{\l}odziej's bounded subsolution theorem in \cite{Ko95}.
\begin{corollary}
    Let $h_1 \in C((0, T) \times \partial \Omega)$. Assume that there exists a function $v \in \mathcal{P}(\Omega_T) \cap L^{\infty}(\Omega_T)$ satisfying
    \begin{align*}
        \begin{cases}
            &dt \wedge (dd^cv)^n \geq e^{\frac{\partial v}{\partial t}+F(t, z, v)}dt \wedge d\mu \text{ in } \Omega_T, \\
            &\lim_{(t, z) \rightarrow (\tau, \zeta)}v(t, z) = h_1(\tau, \zeta) \text{ for all } (\tau, \zeta) \in (0, T) \times \partial \Omega.
        \end{cases}
    \end{align*}
    If $\mu$ is compactly supported or $h_1 \equiv 0$, then there exists a function $u \in \mathcal{P}(\Omega_T) \cap L^{\infty}(\Omega_T)$ satisfying
    \begin{align*}
        \begin{cases}
            &dt \wedge (dd^cu)^n = e^{\frac{\partial u}{\partial t}+F(t, z, u)}dt \wedge d\mu \text{ in } \Omega_T, \\
            &\lim_{(t, z) \rightarrow (\tau, \zeta)}u(t, z) = h(\tau, \zeta) \text{ for all } (\tau, \zeta) \in [0, T) \times \partial \Omega, \\
            &\lim_{t \rightarrow 0^+}u(t, \cdot) = h_0 \text{ in } L^1(\Omega, \mu).
        \end{cases}
    \end{align*}
\end{corollary}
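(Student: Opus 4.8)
The plan is to reduce the Corollary to the Theorem by producing, from the parabolic subsolution $v$, an \emph{elliptic} bounded subsolution for $\mu$ with zero boundary values, and then invoking the Theorem. The starting point is a time-slice of $v$: writing $M:=\sup_{\Omega_T}|v|$ and fixing a closed interval $J\Subset(0,T)$ (so $|\partial_t v(t,\cdot)|\le\kappa_J$ on $J$), the disintegration over the $t$-axis of the parabolic Monge--Amp\`ere measures $dt\wedge(dd^cv)^n$ and $e^{\partial_tv+F(t,z,v)}dt\wedge d\mu$ --- part of the pluripotential framework of \cite{GLZ21-2} --- gives, for almost every $t_0\in J$,
\[
    (dd^cv(t_0,\cdot))^n\ \ge\ e^{\partial_tv(t_0,\cdot)+F(t_0,\cdot,v(t_0,\cdot))}\,d\mu\qquad\text{on }\Omega,
\]
and at such a $t_0$ the density on the right is $\ge e^{-\kappa_J}e^{F(t_0,\cdot,-M)}$, since $F$ is increasing in its last variable and $v(t_0,\cdot)\ge-M$.

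Suppose first that $\mu$ is compactly supported, and fix a compact set $K$ with $\operatorname{supp}\mu\subset K\Subset\Omega$. By continuity of $F$ on $\{t_0\}\times K\times\{-M\}$ the above density is $\ge c>0$ on $K$, hence --- $\mu$ being carried by $K$ --- $(dd^cv(t_0,\cdot))^n\ge c\,d\mu$ on all of $\Omega$. Pick $N\in\mathbb N$ with $N^nc\ge1$; then $\varphi_1:=N\bigl(v(t_0,\cdot)-\sup_\Omega v(t_0,\cdot)-1\bigr)\in PSH(\Omega)\cap L^\infty(\Omega)$ satisfies $\varphi_1\le-N$ and $(dd^c\varphi_1)^n\ge d\mu$. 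Let $\rho\in C^\infty(\overline\Omega)$ be a strictly plurisubharmonic defining function of $\Omega$ ($\rho<0$ in $\Omega$, $\rho=0$ on $\partial\Omega$), set $\delta_K:=-\sup_K\rho>0$, and take $A>0$ so large that $A\rho<\varphi_1$ on the open neighbourhood $\{\rho<-\delta_K/2\}$ of $K$. Then $\varphi:=\max(\varphi_1,A\rho)$ lies in $PSH(\Omega)\cap L^\infty(\Omega)$, equals $\varphi_1$ near $\operatorname{supp}\mu$ and equals $A\rho$ near $\partial\Omega$; hence $(dd^c\varphi)^n\ge d\mu$ on $\Omega$ (trivially where $\varphi=A\rho$, as $\mu$ vanishes there) and $\varphi\to0$ on $\partial\Omega$. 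The Theorem applied with this $\varphi$ yields the required $u$.

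Now suppose $h_1\equiv0$. Then the slicewise density of the first paragraph need not be bounded away from $\partial\Omega$, so I argue by exhaustion. Choose strictly pseudoconvex $\Omega_j\Subset\Omega_{j+1}$ with $\bigcup_j\Omega_j=\Omega$ and put $\mu_j:=\mathbf 1_{\overline{\Omega_j}}\mu$, which is compactly supported in $\Omega$ and of finite mass (constants belong to $PSH(\Omega)\subset L^1_{loc}(\Omega,\mu)$). As $\mu_j\le\mu$, $v$ is still a parabolic subsolution for $\mu_j$ with lateral data $h_1\equiv0$, so the case just treated produces $u_j\in\mathcal P(\Omega_T)\cap L^\infty(\Omega_T)$ solving the Cauchy--Dirichlet problem for $\mu_j$ with the data $h$. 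By the parabolic comparison principle of \cite{GLZ21-2}: $u_{j+1}$ is a subsolution for the $\mu_j$-problem with data $h$ (because $\mu_{j+1}\ge\mu_j$), so $u_{j+1}\le u_j$; and $\tilde v:=v-M-M_h$ --- which satisfies $(dd^c\tilde v)^n=(dd^cv)^n$, $F(t,z,\tilde v)\le F(t,z,v)$, and $\tilde v\le-M_h$, hence stays below the Cauchy--Dirichlet data of every $u_j$ --- is a parabolic subsolution for every $\mu_j$, so $\tilde v\le u_j$ for all $j$. Thus $u_j\searrow u$ for some $u$ bounded below by $\tilde v$, with $u\in\mathcal P(\Omega_T)\cap L^\infty(\Omega_T)$ (the Lipschitz-in-$t$ bounds of the $u_j$ are uniform in $j$, depending only on $F$ and $h$, by the a priori estimates of \cite{GLZ21-2}). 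Letting $j\to\infty$ in $dt\wedge(dd^cu_j)^n=e^{\partial_tu_j+F(t,z,u_j)}dt\wedge d\mu_j$: the left sides converge to $dt\wedge(dd^cu)^n$ by Bedford--Taylor convergence along the decreasing sequences $u_j(t,\cdot)$ and dominated convergence in $t$; the right sides converge to $e^{\partial_tu+F(t,z,u)}dt\wedge d\mu$ using $F(t,z,u_j)\searrow F(t,z,u)$, $\mu_j\nearrow\mu$, and $\partial_tu_j\to\partial_tu$ almost everywhere (the $u_j$ being uniformly semi-concave in $t$ on compact subintervals, from the estimate $\partial_{tt}u_j\le Ct^{-2}$ of \cite{GLZ21-2}). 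Hence $u$ solves the $\mu$-equation; one then checks that the lateral condition $u\to h$ and the initial condition $u(t,\cdot)\to h_0$ in $L^1(\Omega,\mu)$ persist --- this is where $h_1\equiv0$ is used, through a uniform lower barrier for the $u_j$ with lateral values $h$, built from $v$ (with $v\to0$) and the solution of the homogeneous flow ($\mu\equiv0$) with data $h$, $h_0$, which the Theorem itself provides.

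The main obstacle is concentrated in two places. First, the slicing step rests on the disintegration theory for the parabolic Monge--Amp\`ere operator of \cite{GLZ21-2}: one must pin down the full-measure set of times $t_0$ at which $(dd^cv(t_0,\cdot))^n$ genuinely represents the $t_0$-slice of $dt\wedge(dd^cv)^n$ and at which the slicewise inequality holds. Second --- and this is the real crux --- the passage $j\to\infty$ in the case $h_1\equiv0$ requires controlling $e^{\partial_tu_j}$ (forcing one to exploit the uniform semi-concavity in $t$, so that $\partial_tu_j$ converges off a null set) and, more seriously, verifying that the lateral and initial data survive the limit; the latter is delicate because $\mu(\Omega)$ may be infinite, so $\mu(\Omega\setminus\overline{\Omega_j})\not\to0$ in general, and one must instead exploit that $|u-h_0|$ is small near the parabolic corner $\{0\}\times\partial\Omega$ together with the barrier furnished by $h_1\equiv0$. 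Both points should be within reach of the approximation techniques already developed for the Theorem.
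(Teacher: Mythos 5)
Your first branch ($\mu$ compactly supported) is essentially the paper's own argument: slice the parabolic subsolution at a good time $t_0$ (the paper invokes [GLZ21-2, Proposition 3.2] for exactly this), use the uniform bound on $\partial_t v(t_0,\cdot)$ and the monotonicity of $F$ to turn the slice into an elliptic subsolution for $\mu$ after scaling, glue it with $A\rho$ near $\partial\Omega$ to force zero boundary values, and feed the result into the Theorem. That part is fine.

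The branch $h_1\equiv 0$ contains a genuine gap, and the detour it takes is also unnecessary. The gap: your exhaustion argument needs $u_{j+1}\le u_j$ (``$u_{j+1}$ is a subsolution for the $\mu_j$-problem, so $u_{j+1}\le u_j$'') and $\tilde v\le u_j$, and for both you invoke ``the parabolic comparison principle of [GLZ21-2].'' But that comparison principle ([GLZ21-2, Theorem 6.6]) is only proved when the right-hand measure has a strictly positive $L^p$ density, $p>1$; the paper states explicitly (Remark 1.4, and again in Section 4.2) that it cannot be applied to flows driven by general measures such as your $\mu_j=\mathbf 1_{\overline{\Omega_j}}\mu$, precisely because one lacks continuity of supersolutions and absolute continuity of the measure --- indeed even uniqueness for (\ref{CD}) with general $\mu$ is open. (This is why, in the paper's Theorem 4.5, comparison is only ever applied to the auxiliary problems with densities $\chi_k f g_{k,j}+1/j$, which \emph{are} strictly positive and in $L^p$.) Without the monotonicity and the uniform lower barrier, your decreasing limit, the Bedford--Taylor convergence of the left-hand sides, and the survival of the boundary data all fall through; and, as you yourself note, the convergence of $e^{\partial_t u_j}\,d\mu_j$ and of the initial data would need the full machinery of the paper's Section 3 anyway.

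The unnecessary part: when $h_1\equiv 0$ no truncation of $\mu$ is needed at all. At a fixed interior time $t_0$ the exponent $\partial_t v(t_0,\cdot)+F(t_0,\cdot,v(t_0,\cdot))$ is bounded below on all of $\Omega$ (uniform Lipschitz bound in $t$ at interior times, $F$ increasing in $r$ and bounded below at level $-\lVert v\rVert_{L^\infty}$ in the framework used here --- this is exactly inequality (\ref{4-1-2}) in Lemma 4.1), so $\varphi:=e^{M/n}v(t_0,\cdot)$ already satisfies $(dd^c\varphi)^n\ge d\mu$ in $\Omega$ and $\varphi\to e^{M/n}h_1(t_0,\cdot)=0$ on $\partial\Omega$. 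One then applies the Theorem directly: its whole point (Theorem 4.5 versus Theorem 4.3) is that compact support of $\mu$ is \emph{not} required once an elliptic bounded subsolution with zero boundary values is in hand. Compactness of $\operatorname{supp}\mu$ enters only to repair the boundary values of the slice when $h_1\not\equiv 0$, which is the case you already handled correctly.
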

\begin{remark}
    Note that the last boundary condition in (\ref{CD}) is weaker than
    \begin{align}\label{l1}
        \lim_{t \rightarrow 0^+}u(t, \cdot) = h_0 \text{ in } L^1(\Omega, dV),
    \end{align}
    which is the one in \cite{GLZ21-2}. In \cite{GLZ21-2}, the authors first proved the solvability under the Cauchy boundary condition in $L^1(\Omega, d\mu)$ and then improved it to (\ref{l1}) by using their assumption on the strict positivity of the density function. Since we are dealing with the more general right-hand side, we cannot improve the Cauchy boundary condition in (\ref{CD}) to (\ref{l1}) in general. Moreover, as we do not assume strict positivity of $\mu$, the parabolic Cauchy-Dirichlet problem (\ref{CD}) with the Cauchy boundary condition (\ref{l1}) is not solvable in general, which will be shown in Lemma $5.1$. Instead, we will show in Section $5.2$ that our Cauchy boundary condition in $L^1(\Omega, d\mu)$ can be improved to the one in \cite{GLZ21-2} when we assume some condition on the pair $(\mu, h_0)$.
\end{remark}
\begin{remark}
    We do not know yet about the uniqueness of the solution for (\ref{CD}). In \cite[Theorem 6.6]{GLZ21-2}, the authors proved 
    the comparison principle for complex Monge-Amp\`ere flows when the given measure has a strictly positive $L^p$ density function for some $p > 1$, which infers the uniqueness of the solution. Unfortunately, the proof of \cite[Theorem 6.6]{GLZ21-2} cannot be applied to our case, since we do not have the continuity of supersolutions and the absolute continuity of the given measure with respect to the Lebesgue measure.
\end{remark}
\indent 
\textbf{Organization}. In Section $2$, we recall some basic definitions and lemmas of the parabolic pluripotential theory in \cite{GLZ21-2}. In Section $3$, we prove some convergence lemmas related to the right-hand side of (CMAF), which contains the time derivatives of parabolic potentials. In Section $4$, we first prove our main result for compactly supported measures in Theorem $4.3$. By using the results and arguments in the proof of Theorem $4.3$, we get our main result in Theorem $4.5$. In Section $5$, we show that (\ref{CD}) is not solvable in general with the boundary condition (\ref{l1}). We then prove that we can get the boundary condition (\ref{l1}) of the solution under some conditions. \\
\mbox{}\\
\indent \textbf{Acknowledgements.} I would like to thank my advisor, Ngoc Cuong Nguyen, for suggesting this problem and providing invaluable guidance and support. I also would like to thank Do Hoang Son for his helpful comments on the issue of the Cauchy boundary condition.
\section{Preliminaries}
We briefly introduce basic definitions and lemmas related to the parabolic potentials and parabolic Monge-Amp\`ere operator. We refer the reader to \cite{GLZ21-2} for detailed proofs.
\subsection{Parabolic Monge-Amp\`ere Operator} We recall a parabolic version of the Chern-Levine-Nirenberg inequality (see e.g. \cite{Ko05}). 
\begin{lemma}[{\cite[Lemma 2.1]{GLZ21-2}}] \label{CLN} Fix $u \in \mathcal{P}(\Omega_T) \cap L^{\infty}_{loc}(\Omega_T)$ and $\chi$ a continuous test function on $\Omega_T$. Then the function
\begin{align*}
    \Gamma_{\chi} : t \mapsto \int_{\Omega}\chi(t, \cdot)(dd^cu(t, \cdot))^n
\end{align*}
is continuous in $(0, T)$. If $\text{supp}(\chi) \Subset E_1 \Subset E_2 \Subset \Omega_T$, then
\begin{align*}
    \sup_{0 \leq t < T}\left\vert \int_{\Omega}\chi(t, \cdot)(dd^cu(t, \cdot))^n\right\vert \leq C\sup_{\Omega_T}\lvert \chi\rvert (\sup_{E_2}\lvert u\rvert)^n,
\end{align*}
where $C > 0$ is a constant depending only on $(E_1, E_2, \Omega_T)$.
\end{lemma}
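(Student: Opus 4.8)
The plan is to deduce both assertions from the classical, elliptic Chern-Levine-Nirenberg inequalities of Bedford and Taylor \cite{BT82}, applied to the plurisubharmonic slices $z \mapsto u(t,z)$, and to make the resulting slice-wise estimates uniform in $t$ by using the only feature of a parabolic potential that is absent in the elliptic theory: the local uniform Lipschitz control (\ref{local uniform}) in the time variable.

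The first thing I would record is the elementary consequence of (\ref{local uniform}): if $J \Subset (0,T)$, then $|u(t,z) - u(s,z)| \le \kappa_J|t-s|$ for all $z \in \Omega$ and all $t,s \in J$; in particular $u(t,\cdot) \to u(s,\cdot)$ uniformly on $\Omega$ as $t \to s$. For the uniform bound, fix $\mathrm{supp}(\chi) \Subset E_1 \Subset E_2 \Subset \Omega_T$. For each fixed $t$ the spatial slices satisfy $\mathrm{supp}\,\chi(t,\cdot) \subseteq \overline{E_1^t} \subseteq E_2^t \Subset \Omega$, the middle inclusion holding because $E_2^t$ is the slice of the open set $E_2 \supseteq \overline{E_1}$ and hence is open, while $\overline{E_2^t}$ sits inside the compact slice $(\overline{E_2})^t \subseteq \Omega$. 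Applying the elliptic Chern-Levine-Nirenberg inequality on $E_2^t$ to the bounded plurisubharmonic function $u(t,\cdot)$ gives
\begin{align*}
\left| \int_\Omega \chi(t,\cdot)(dd^cu(t,\cdot))^n \right| &\le \sup_{\Omega_T}|\chi| \int_{\overline{E_1^t}}(dd^cu(t,\cdot))^n \\
&\le \sup_{\Omega_T}|\chi| \cdot C_t \left( \sup_{E_2^t}|u(t,\cdot)| \right)^n \\
&\le \sup_{\Omega_T}|\chi| \cdot C_t \left( \sup_{E_2}|u| \right)^n ,
\end{align*}
where the last step uses $\{t\}\times E_2^t \subseteq E_2$. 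It remains to see that the elliptic constant $C_t$ can be bounded independently of $t$: $C_t$ depends only on geometric quantities of the pair $(\overline{E_1^t}, E_2^t)$ inside $\Omega$ — essentially the diameter of $E_2^t$, the distance from $\overline{E_1^t}$ to $\partial E_2^t$, and the distance from $E_2^t$ to $\partial\Omega$ — and each of these is uniformly controlled, as $t$ ranges over $(0,T)$, by the corresponding distances attached to $E_1, E_2 \Subset \Omega_T$. This yields the displayed estimate of the lemma with $C = \sup_t C_t = C(E_1,E_2,\Omega_T)$.

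For the continuity of $\Gamma_\chi$, I would fix $s \in (0,T)$ and an arbitrary sequence $t_j \to s$, all lying in a fixed $J \Subset (0,T)$ chosen so that $u$ is bounded on some open neighbourhood of $\mathrm{supp}(\chi)$ that is relatively compact in $\Omega_T$ (possible since $u \in L^\infty_{loc}(\Omega_T)$). By the observation above, $u(t_j,\cdot) \to u(s,\cdot)$ uniformly on $\Omega$, with these functions uniformly bounded on the relevant compact subset of $\Omega$; the standard Bedford-Taylor argument — telescoping the difference of the Monge-Amp\`ere products and integrating by parts twice, using that the mixed Monge-Amp\`ere masses involved are locally uniformly bounded by the elliptic Chern-Levine-Nirenberg inequalities — then shows $(dd^cu(t_j,\cdot))^n \to (dd^cu(s,\cdot))^n$ weakly as Radon measures. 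Decomposing
\begin{align*}
\Gamma_\chi(t_j) - \Gamma_\chi(s) &= \int_\Omega \bigl(\chi(t_j,\cdot) - \chi(s,\cdot)\bigr)(dd^cu(t_j,\cdot))^n \\
&\quad + \int_\Omega \chi(s,\cdot)(dd^cu(t_j,\cdot))^n - \int_\Omega \chi(s,\cdot)(dd^cu(s,\cdot))^n ,
\end{align*}
the first term tends to $0$ since $\|\chi(t_j,\cdot)-\chi(s,\cdot)\|_{L^\infty(\Omega)} \to 0$ by uniform continuity of $\chi$, while the Monge-Amp\`ere masses on the fixed compact spatial projection of $\mathrm{supp}(\chi)$ stay bounded by the first part; the remaining two terms combine to $0$ in the limit by the weak convergence just established, tested against the fixed continuous, compactly supported function $\chi(s,\cdot)$ (after a routine regularization of $\chi(s,\cdot)$ using the uniform mass bound). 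Since $t_j \to s$ was arbitrary, $\Gamma_\chi$ is continuous at $s$.

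The only genuinely parabolic point — and hence the part I would expect to be the main (if modest) obstacle — is the bookkeeping of uniformity in $t$: one must verify that neither the Chern-Levine-Nirenberg constant for the pair of slices $(\overline{E_1^t}, E_2^t)$ nor the bounds on the mixed Monge-Amp\`ere masses used in the integration-by-parts step degenerate as $t$ approaches $0$ or $T$. This is exactly where the relative compactness $E_1 \Subset E_2 \Subset \Omega_T$ is used, as it forces a positive lower bound, uniform in $t$, on the spatial distances between the slices and between $E_2^t$ and $\partial\Omega$; everything else is the classical elliptic Monge-Amp\`ere theory applied slice by slice.
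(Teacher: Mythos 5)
Your proof is correct and follows the same route as the source: the paper itself gives no proof (it simply quotes \cite[Lemma 2.1]{GLZ21-2}), and the argument there is exactly your slice-wise application of the elliptic Chern--Levine--Nirenberg inequality, with uniformity of the constant coming from the relative compactness $E_1 \Subset E_2 \Subset \Omega_T$, and continuity of $\Gamma_\chi$ coming from the Lipschitz-in-time control, which gives uniform convergence $u(t_j,\cdot) \to u(s,\cdot)$ and hence weak convergence of the Monge--Amp\`ere measures by Bedford--Taylor. Your bookkeeping of the uniform lower bound on the spatial distance between the slices $(\overline{E_1})^t$ and $\Omega \setminus E_2^t$ is the right way to make the constant independent of $t$, so no gap remains.
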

Note that not only the uniform inequality, but also the continuity of the map $\Gamma_\chi$ is a useful result which will be used later.
\begin{definition}[{\cite[Definition 2.2]{GLZ21-2}}]
    Fix $u \in \mathcal{P}(\Omega_T) \cap L^{\infty}_{loc}(\Omega_T)$. The map
    \begin{align*}
        C_c(\Omega_T) \ni \chi \mapsto \int_{\Omega_T}\chi dt \wedge (dd^cu)^n := \int_0^T dt \left(\int_{\Omega}\chi(t, \cdot)(dd^cu(t, \cdot))^n\right)
    \end{align*}
    defines a positive Radon measure in $\Omega_T$ denoted by $dt \wedge (dd^cu)^n$.
\end{definition}
\begin{lemma}
    Fix $u \in \mathcal{P}(\Omega_T) \cap L^{\infty}_{loc}(\Omega_T)$ and let $\{u_j\}_{j= 1}^{\infty}$ be a sequence of functions in $\mathcal{P}(\Omega_T) \cap L^{\infty}_{loc}(\Omega_T)$ such that for almost every $t \in (0, T)$,
    \begin{align*}
        (dd^cu_j(t, \cdot))^n \rightarrow (dd^cu(t, \cdot))^n \text{ weakly as } j \rightarrow \infty.
    \end{align*}
    Then
    \begin{align*}
        dt \wedge (dd^cu_j)^n \rightarrow dt \wedge (dd^cu)^n \text{ weakly as } j \rightarrow \infty.
    \end{align*}
\end{lemma}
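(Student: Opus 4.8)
The plan is to unwind the definition of the parabolic Monge-Amp\`ere measures and reduce the statement to a one-variable dominated convergence argument. Since weak convergence of Radon measures on $\Omega_T$ is tested against $C_c(\Omega_T)$, I would fix an arbitrary $\chi\in C_c(\Omega_T)$ and aim to show $\int_{\Omega_T}\chi\,dt\wedge(dd^cu_j)^n\to\int_{\Omega_T}\chi\,dt\wedge(dd^cu)^n$. By the very definition of the measures $dt\wedge(dd^cu_j)^n$, these two integrals equal
\[
\int_0^T h_j(t)\,dt\quad\text{and}\quad\int_0^T h_\infty(t)\,dt,\qquad h_j(t):=\int_\Omega\chi(t,\cdot)\,(dd^cu_j(t,\cdot))^n,
\]
with $h_\infty$ defined analogously using $u$ in place of $u_j$; thus the whole lemma reduces to $\int_0^T h_j(t)\,dt\to\int_0^T h_\infty(t)\,dt$.

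First I would pin down pointwise convergence of the integrands. Because $\mathrm{supp}\,\chi$ is a compact subset of $\Omega_T=(0,T)\times\Omega$, there are a compact interval $[a,b]\subset(0,T)$ and a compact set $L\Subset\Omega$ with $\mathrm{supp}\,\chi\subset[a,b]\times L$; hence $h_j\equiv h_\infty\equiv0$ off $[a,b]$, and each slice $\chi(t,\cdot)$ lies in $C_c(\Omega)$. The hypothesis gives a null set $N\subset(0,T)$ such that for $t\notin N$ the measures $(dd^cu_j(t,\cdot))^n$ converge weakly on $\Omega$ to $(dd^cu(t,\cdot))^n$; pairing this with the fixed test function $\chi(t,\cdot)$ yields $h_j(t)\to h_\infty(t)$ for every $t\notin N$. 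In addition, Lemma~\ref{CLN} guarantees that each $h_j$ and $h_\infty$ is continuous, in particular measurable, on $(0,T)$.

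What remains, and what I expect to be the crux, is a uniform-in-$j$ integrable majorant for $\{h_j\}$, after which Lebesgue's dominated convergence theorem finishes the argument. Here I would invoke the quantitative half of Lemma~\ref{CLN}: choosing $E_1,E_2$ with $\mathrm{supp}\,\chi\Subset E_1\Subset E_2\Subset\Omega_T$,
\[
|h_j(t)|\le\sup_{0\le s<T}\Big|\int_\Omega\chi(s,\cdot)(dd^cu_j(s,\cdot))^n\Big|\le C\,\sup_{\Omega_T}|\chi|\,\Big(\sup_{E_2}|u_j|\Big)^n,
\]
with $C=C(E_1,E_2,\Omega_T)$; consequently, provided the family $\{u_j\}$ is locally uniformly bounded on $\Omega_T$ (that is, $M:=\sup_j\sup_{E_2}|u_j|<\infty$), all the $h_j$ are dominated by $C\,\sup_{\Omega_T}|\chi|\,M^n\cdot\mathbf 1_{[a,b]}$, which is integrable on $(0,T)$, and dominated convergence gives $\int_0^T h_j\,dt\to\int_0^T h_\infty\,dt$, as wanted. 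I would take care to record that this local uniform boundedness of $\{u_j\}$ is genuinely needed for the conclusion: without it one can take $u_j(t,z)=\phi_j(t)\,w(z)$ with $w$ a fixed bounded strictly plurisubharmonic function and $\phi_j\ge0$ a triangular spike of height $j$ and integral $\tfrac12$ supported in a small subinterval of $(0,T)$, so that $(dd^cu_j(t,\cdot))^n\to0$ weakly for a.e.\ $t$ while $dt\wedge(dd^cu_j)^n$ keeps a fixed positive mass; in the situations where this lemma is applied the approximants $u_j$ are always trapped in a fixed compact family (e.g.\ squeezed between fixed barriers, or decreasing to $u$), so the hypothesis is met and the scheme above is complete.
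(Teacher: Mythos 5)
Your argument is essentially the one the paper delegates to the proof of \cite[Proposition 2.3]{GLZ21-2}: slice the parabolic measure, obtain $h_j(t)\to h_\infty(t)$ for a.e.\ $t$ from the slicewise weak convergence, dominate the $h_j$ via the parabolic Chern--Levine--Nirenberg inequality (Lemma~\ref{CLN}), and conclude by dominated convergence; the continuity of each $h_j$ provided by Lemma~\ref{CLN} settles measurability, as you note. The one substantive point is the one you flag yourself: the domination step needs $\sup_j\sup_{E_2}\lvert u_j\rvert<\infty$, i.e.\ local uniform boundedness of the family, which the lemma as printed does not hypothesize, and your spike example $u_j(t,z)=\phi_j(t)w(z)$ genuinely shows the unqualified statement fails (for every $t\neq t_0$ the slice measures are eventually zero, while $dt\wedge(dd^cu_j)^n$ retains mass of order $j^{\,n-1}$ concentrating at $t_0$). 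So your proof establishes the lemma under the additional --- and clearly intended --- local uniform boundedness hypothesis, which is satisfied in every application in the paper (the approximants in Theorems 4.3 and 4.5 are uniformly bounded, squeezed between fixed barriers or decreasing), and the same caveat propagates to Lemma~\ref{continuity}, since Xing's convergence theorem also requires locally uniformly bounded sequences; making that hypothesis explicit, as you do, is the correct reading rather than a defect of your argument.
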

\begin{proof}
    The conclusion can be shown by following the proof of \cite[Proposition 2.3]{GLZ21-2}.
\end{proof}
Let us recall the following definition of the capacity by Bedford and Taylor \cite[Definition 3.1]{BT82}. 
\begin{definition}
    For any Borel subset $E \subset \Omega$, 
    \begin{align*}
        cap(E, \Omega) := \sup\left\{\int_E (dd^cu)^n \mid u \in PSH(\Omega), -1 \leq u \leq 0\right\}.
    \end{align*}
\end{definition}
\begin{definition}[{\cite[Definition 4.23]{GZ17}}]
    A sequence of Borel functions $\{f_j\}_{j = 1}^{\infty}$ converges in capacity to a Borel function $f$ in $\Omega$ if for all $\delta > 0$ and all compact subsets $K \subset \Omega$,
    \begin{align*}
        \lim_{j \rightarrow \infty}cap(K \cap \{\lvert f_j-f\rvert \geq \delta\}, \Omega) = 0.
    \end{align*}
\end{definition}
By combining \cite[Theorem 1]{Xi96} (see also \cite[Theorem 4.26]{GZ17}) and Lemma $2.3$, we have the following lemma.
\begin{lemma} \label{continuity}
    Fix $u \in \mathcal{P}(\Omega_T) \cap L^{\infty}_{loc}(\Omega_T)$ and let $\{u_j\}_{j = 1}^{\infty}$ be a sequence of functions in $\mathcal{P}(\Omega_T) \cap L^{\infty}_{loc}(\Omega_T)$ such that for almost every $t \in (0, T)$,
    \begin{align*}
        u_j(t, \cdot) \rightarrow u(t, \cdot) \text{ in capacity as } j \rightarrow \infty.
    \end{align*}
    Then
    \begin{align*}
        dt \wedge (dd^cu_j)^n \rightarrow dt \wedge (dd^cu)^n \text{ weakly as } j \rightarrow \infty.
    \end{align*}
\end{lemma}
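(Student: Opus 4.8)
The plan is to pass to the time-slices and then invoke Lemma~$2.3$. By Lemma~$2.3$ it is enough to prove that for almost every $t\in(0,T)$ one has
\begin{align*}
(dd^cu_j(t,\cdot))^n \longrightarrow (dd^cu(t,\cdot))^n \text{ weakly on } \Omega \text{ as } j\to\infty.
\end{align*}
So fix a time $t\in(0,T)$ for which $u_j(t,\cdot)\to u(t,\cdot)$ in capacity in $\Omega$; by hypothesis almost every $t$ has this property, so it suffices to treat such a $t$.

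At this fixed $t$ the slices are well behaved: since $u,u_j\in\mathcal{P}(\Omega_T)$, the maps $z\mapsto u(t,z)$ and $z\mapsto u_j(t,z)$ are plurisubharmonic in $\Omega$, and since $u,u_j\in L^{\infty}_{loc}(\Omega_T)$ while $\{t\}\times K$ is a compact subset of $\Omega_T$ for every $K\Subset\Omega$, all of these slices belong to $PSH(\Omega)\cap L^{\infty}_{loc}(\Omega)$. Combining this with the fiberwise convergence $u_j(t,\cdot)\to u(t,\cdot)$ in capacity, Xing's continuity theorem for the complex Monge-Amp\`ere operator \cite[Theorem~1]{Xi96} (see also \cite[Theorem~4.26]{GZ17}) gives $(dd^cu_j(t,\cdot))^n\to(dd^cu(t,\cdot))^n$ weakly on $\Omega$. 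Since this holds for almost every $t$, the hypothesis of Lemma~$2.3$ is met, and that lemma then yields $dt\wedge(dd^cu_j)^n\to dt\wedge(dd^cu)^n$ weakly on $\Omega_T$, as claimed.

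The one point that requires attention is the fiberwise use of Xing's theorem, i.e.\ checking that its hypotheses hold for almost every $t$. Plurisubharmonicity and local boundedness of the slices are immediate from $u,u_j\in\mathcal{P}(\Omega_T)\cap L^{\infty}_{loc}(\Omega_T)$, and convergence in capacity is part of the assumption; the only delicate issue is whether one also needs local uniform boundedness in $j$ of the family $\{u_j(t,\cdot)\}_{j}$. If the formulation of the continuity theorem that one uses requires this, it has to be supplied separately---and in the approximation schemes where this lemma is applied the relevant sequences do carry such uniform bounds---whereas Xing's statement for families converging in capacity to an $L^{\infty}_{loc}$ limit needs no extra input. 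Everything else is bookkeeping handled by Lemma~$2.3$: the passage from slicewise weak convergence to weak convergence on $\Omega_T$, and the fact that ``almost every $t$'' is enough.
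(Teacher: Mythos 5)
Your argument is exactly the paper's: the paper proves this lemma by combining Xing's continuity theorem \cite[Theorem 1]{Xi96} (or \cite[Theorem 4.26]{GZ17}) on each time slice with Lemma~2.3 to pass from slicewise weak convergence of $(dd^cu_j(t,\cdot))^n$ to weak convergence of $dt\wedge(dd^cu_j)^n$. Your side remark about local uniform boundedness in $j$ is a reasonable caveat, but it applies equally to the paper's one-line proof and does not constitute a departure from it.
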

\subsection{Time Derivatives} Because of the local uniform Lipschitzness, the time derivatives of the parabolic potentials are well-defined. We first explain this well-definedness in more detail, which is a slight improvement of the one in \cite[Lemma 1.13]{GLZ21-2}.
\begin{lemma}
    Let $u \in \mathcal{P}(\Omega_T)$. Then there exists a Borel set $E \subset \Omega_T$ such that $\frac{\partial u}{\partial t}(t, z)$ exists for all $(t, z) \in \Omega_T \setminus E$ and
    \begin{align*}
        \int_{E\cap ((0, T) \times K)}dt \wedge (dd^c\varphi)^n = 0 
    \end{align*}
    for every compact set $K \subset \Omega$ and for every $\varphi \in PSH(\Omega)$ satisfying $-1 \leq \varphi \leq 0$.
\end{lemma}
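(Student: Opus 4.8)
The plan is to take $E$ to be exactly the set of points $(t,z)\in\Omega_T$ at which $\frac{\partial u}{\partial t}$ does not exist, to verify that $E$ is Borel and that its sections behave well, and then to conclude by Tonelli's theorem together with the fact that $dt\wedge(dd^c\varphi)^n$ is a product measure.

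The first step is to show that $u$ is a Borel function on $\Omega_T$. Fix $J\Subset(0,T)$ and $\Omega'\Subset\Omega$, fix a radial mollifier $\rho$, and for $\epsilon>0$ small set $u_\epsilon(t,z):=\int_{\mathbb{C}^n}u(t,z-\epsilon w)\,\rho(w)\,dV(w)$ on $J\times\Omega'$. For each fixed $t$, $u_\epsilon(t,\cdot)$ is smooth and $u_\epsilon(t,\cdot)\searrow u(t,\cdot)$ as $\epsilon\searrow0$ by the standard monotonicity of radial mollifications of plurisubharmonic functions; moreover convolution in $z$ does not affect the Lipschitz estimate (\ref{local uniform}), so $u_\epsilon(\cdot,z)$ is $\kappa_J$-Lipschitz uniformly in $z\in\Omega'$. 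A triangle-inequality argument then shows $u_\epsilon$ is jointly continuous on $J\times\Omega'$, so $u=\inf_{\epsilon\in\mathbb{Q}\cap(0,\epsilon_0)}u_\epsilon$ on $J\times\Omega'$ is Borel (indeed upper semicontinuous). Exhausting $\Omega_T$ by such boxes shows $u$ is Borel on $\Omega_T$. Because $s\mapsto u(t+s,z)$ is continuous, the existence of $\frac{\partial u}{\partial t}(t,z)$ at a point where $u$ is finite is equivalent to the coincidence of the $\limsup$ and the $\liminf$, as $\mathbb{Q}\setminus\{0\}\ni s\to0$, of the Borel difference quotients $\frac{u(t+s,z)-u(t,z)}{s}$, which are themselves Borel functions of $(t,z)$; hence $E$ is Borel.

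The second step handles sections. Set $P:=\{z\in\Omega: u(t,z)=-\infty\text{ for some }t\in(0,T)\}$; by (\ref{local uniform}) this is the same as $\{z:u(\cdot,z)\equiv-\infty\}$, and it is pluripolar, so $(dd^c\varphi)^n(P)=0$ for every $\varphi\in PSH(\Omega)\cap L^{\infty}(\Omega)$ by Bedford--Taylor. For $z\in\Omega\setminus P$ the map $t\mapsto u(t,z)$ is finite and locally Lipschitz on $(0,T)$ by (\ref{local uniform}), hence differentiable for a.e.\ $t$, so the section $\{t\in(0,T):(t,z)\in E\}$ has Lebesgue measure zero. Now fix $\varphi$ with $-1\le\varphi\le0$ and a compact $K\subset\Omega$. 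The Radon measure $dt\wedge(dd^c\varphi)^n$ coincides with the product measure $d\ell|_{(0,T)}\otimes(dd^c\varphi)^n$, because both act on $C_c(\Omega_T)$ by the iterated integral of Definition $2.2$ (using Fubini for continuous compactly supported integrands) and a Radon measure is determined by its action on $C_c$; since $d\ell|_{(0,T)}$ is finite and $(dd^c\varphi)^n$ is finite on $K$, Tonelli's theorem applies to the indicator of $E\cap((0,T)\times K)$ and gives
\[
\int_{E\cap((0,T)\times K)}dt\wedge(dd^c\varphi)^n=\int_{K\setminus P}\ell\big(\{t:(t,z)\in E\}\big)\,(dd^c\varphi)^n(z)=0,
\]
since the contribution of $P$ vanishes as $(dd^c\varphi)^n(P)=0$.

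The main obstacle is the joint Borel measurability in the first step: $u$ is assumed only to be plurisubharmonic in $z$ and Lipschitz in $t$ separately, so one must argue — via the mollification and joint-continuity trick — that it is jointly measurable, and with it that $E$ is Borel; the other delicate point is the identification of $dt\wedge(dd^c\varphi)^n$ with the product measure, which legitimizes the use of Tonelli. Once these are in place, the remaining ingredients (a.e.\ differentiability of one-variable Lipschitz functions, and that $(dd^c\varphi)^n$ does not charge pluripolar sets) are standard.
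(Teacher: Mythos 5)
Your proof is correct and follows essentially the same route as the paper: isolate the pluripolar set $P$ of points $z$ with $u(\cdot,z)\equiv-\infty$ (using the Lipschitz estimate in $t$), use a.e.\ differentiability of the locally Lipschitz maps $t\mapsto u(t,z)$ for $z\notin P$, and conclude by Fubini/Tonelli together with the fact that $(dd^c\varphi)^n$ does not charge pluripolar (zero-capacity) sets. The only difference is that you prove the joint Borel measurability of $u$ and of the non-differentiability set $E$ directly (via mollification in $z$ and rational difference quotients), whereas the paper delegates this construction to \cite[Lemma 1.13]{GLZ21-2}.
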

\begin{proof}
    We first claim that $u^{-1}(\{-\infty\}) = (0, T) \times P$ for some pluripolar set $P \subset \Omega$. Indeed, assume that $u(t_0, z_0) = -\infty$ for some $(t_0, z_0) \in \Omega_T$. Let $P := (u(t_0, \cdot))^{-1}(\{-\infty\})$ be the polar set of the plurisubharmonic function $u(t_0, \cdot)$. We show that $u^{-1}(\{-\infty\}) = (0, T) \times P$. First, assume that $(t_1, z_1) \in u^{-1}(\{-\infty\})$. By the local uniform Lipschitzness of $u$, there exists $\kappa_1 > 0$ such that
    \begin{align*}
        u(t_0, z_1) \leq u(t_1, z_1) + \kappa_1\lvert t_0 - t_1\rvert,
    \end{align*}
    which implies that $u(t_0, z_1) = -\infty$. It infers that $(t_1, z_1) \in (0, T) \times P$. For the other direction, assume that $(t_2, z_2) \in (0, T) \times P$. Again by the local uniform Lipschitzness, there exists $\kappa_2 > 0$ such that
    \begin{align*}
        u(t_2, z_2) \leq u(t_0, z_2) + \kappa_2\lvert t_0-t_2\rvert,
    \end{align*}
    which implies that $u(t_2, z_2) = -\infty$. Therefore $(t_2, z_2) \in u^{-1}(\{-\infty\})$.
    \\
    \indent Next, we fix such a pluripolar set $P \subset \Omega$. By following the proof of \cite[Lemma 1.13]{GLZ21-2}, one can show that there exists a Borel set $E \subset \Omega_T$ such that $\frac{\partial u}{\partial t}(t, z)$ is well-defined for all $(t, z) \in \Omega_T \setminus E$ and
    \begin{align*}
        E_z := \{t \in (0, T) ~\mid~ (t, z) \in E\}
    \end{align*}
    satisfies $\ell(E_z) = 0$ for all $z \in \Omega \setminus P$. Therefore for every compact $K \subset \Omega$ and for every $\varphi \in PSH(\Omega)$ satisfying $-1 \leq \varphi \leq 0$,
    \begin{align*}
        \int_{E\cap ((0, T) \times K)}dt \wedge (dd^c\varphi)^n &= \int_{K\setminus P} (dd^c\varphi)^n \int_{E_z} d\ell + \int_{K \cap P}(dd^c\varphi)^n\int_{E_z}d\ell \\
        &= 0.
    \end{align*}
    The first equality holds by the Fubini's theorem, and the second equality holds by the fact that $cap(P, \Omega) = 0$. 
\end{proof}
We now introduce the local uniform semi-concavity, which was an important assumption on \cite[Proposition 2.9]{GLZ21-2} to obtain the convergence of the time derivatives.
\begin{definition}\mbox{}
    \begin{enumerate}
        \item [(1)] We say that $u : \Omega_T \rightarrow \mathbb{R}$ is locally uniformly semi-concave in $(0, T)$ if for any compact $J \Subset (0, T)$, there exists $C = C(J) > 0$ such that
    \begin{align}\label{concavity}
        t \mapsto u(t, z) - Ct^2 \text{ is concave in }J \text{ for all } z \in \Omega.
    \end{align}
        \item [(2)] We say that the family $\mathcal{A}$ of functions mapping from $\Omega_T$ to $\mathbb{R}$ is locally uniformly semi-concave in $(0, T)$ if for any compact $J \Subset (0, T)$, there exists $C = C(J) > 0$ such that (\ref{concavity}) holds for all $z \in \Omega$ and for all $u \in \mathcal{A}$.
    \end{enumerate}
\end{definition}
With the local uniform semi-concavity assumption, we can prove the following lemma for the convergence of the time derivatives, which is a slight improvement of \cite[Proposition 2.9]{GLZ21-2}.
\begin{lemma}
    Let $\{u_j\}_{j = 1}^{\infty}$ be a sequence of functions in $\mathcal{P}(\Omega_T)$ such that
    \begin{itemize}
        \item for a.e. $t \in (0, T)$, $u_j(t, \cdot) \rightarrow u(t, \cdot)$ in capacity as $j \rightarrow \infty$;
        \item is locally uniformly semi-concave in $(0, T)$.
    \end{itemize}
    Then there exists a Borel subset $E \subset \Omega_T$ such that for all compact subset $K \subset \Omega$ and for all $\varphi \in PSH(\Omega)$ satisfying $-1 \leq \varphi \leq 0$, $\lim_{j \rightarrow \infty}\frac{\partial u_j}{\partial t} = \frac{\partial u}{\partial t}$ in $((0, T) \times K) \setminus E$ and 
    \begin{align*}
        \int_{E \cap ((0, T) \times K)}dt \wedge (dd^c\varphi)^n = 0.
    \end{align*}
\end{lemma}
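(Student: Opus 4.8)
The plan is to combine the elementary calculus of concave functions of the time variable with the pluripotential machinery of Section~2, in the spirit of \cite[Proposition~2.9]{GLZ21-2}, the new point being to keep the exceptional set small enough to be negligible for \emph{every} measure $dt\wedge(dd^c\varphi)^n$ with $-1\le\varphi\le0$ simultaneously.

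I would first reduce to a fixed compact time interval. Exhaust $(0,T)$ by compact intervals $J_m\Subset(0,T)$, let $C_m$ be the corresponding semi-concavity constant of the family, and set $w_j^{(m)}:=u_j-C_mt^2$ and $w^{(m)}:=u-C_mt^2$, so that each $w_j^{(m)}(\cdot,z)$ is concave on $J_m$. It suffices to produce, for each $m$, a Borel set $E_m\subset J_m\times\Omega$ whose $z$-slice $(E_m)_z$ is Lebesgue-null for all $z$ outside a pluripolar set and along the complement of which $\partial_tu_j\to\partial_tu$ on $(J_m\times K)\setminus E_m$ for every compact $K\subset\Omega$. Then $E:=\bigl(\bigcup_mE_m\bigr)\cup\{(t,z):\partial_tu_j(t,z)\text{ or }\partial_tu(t,z)\text{ fails to exist for some }j\}$ has the same slice structure, using Lemma~2.7 for each $u_j$ and for $u$ and the fact that a countable union of pluripolar sets is pluripolar; and then $\int_{E\cap((0,T)\times K)}dt\wedge(dd^c\varphi)^n=0$ for all $\varphi$ with $-1\le\varphi\le0$ by the Fubini computation in the proof of Lemma~2.7, since bounded plurisubharmonic functions charge no mass on pluripolar sets.

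The core of the argument is to convert the capacity hypothesis into pointwise information adapted to concave functions. Since $(dd^c\varphi)^n(B)\le cap(B,\Omega)$ for every Borel $B$ and every $\varphi$ with $-1\le\varphi\le0$, and $c_K\,dV\le cap(\cdot,\Omega)$ on a fixed compact $K$, the hypothesis yields, for a.e.\ $t$, convergence of $u_j(t,\cdot)$ to $u(t,\cdot)$ both in $dV$-measure and in $(dd^c\varphi)^n$-measure on compacts. Fixing a countable dense set $\{r_n\}$ in this full-measure set of times and using the standard fact that convergence in capacity admits a subsequence converging quasi-everywhere, a diagonal extraction produces a subsequence $\{u_{j_k}\}$ and a pluripolar set $P$ with $u_{j_k}(r_n,z)\to u(r_n,z)$ for every $n$ whenever $z\notin P$. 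For such $z$, the concave functions $w_{j_k}^{(m)}(\cdot,z)$ converge on the dense set $\{r_n\}$ and are locally uniformly bounded on $J_m^{\circ}$ (above by Hartogs' lemma, below by concavity together with the convergence at the dense times), hence converge in $C^0_{loc}(J_m^{\circ})$; the limit is the concave representative of $w^{(m)}(\cdot,z)$, which in particular shows $u(\cdot,z)$ agrees off a pluripolar slice with a semi-concave function of $t$, so that $u\in\mathcal{P}(\Omega_T)$ and $\partial_tu$ is meaningful. Since $C^0_{loc}$-convergent concave functions have convergent one-sided derivatives at every point of differentiability of the limit, we get $\partial_tu_{j_k}(t,z)\to\partial_tu(t,z)$ for all $t\in J_m^{\circ}$ outside a countable set depending on $z$.

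The step I expect to be the main obstacle is promoting this from the subsequence $\{j_k\}$ to the full sequence while keeping the $z$-exceptional set pluripolar. Since the limit $u$ is fixed, every subsequence of $\{u_j\}$ again satisfies the hypotheses, so every subsequence of $\{\partial_tu_j(\cdot,z)\}$ has a sub-subsequence converging to $\partial_tu(\cdot,z)$ off a countable set; combined with the rigidity of concave functions — a sequence of concave functions whose every subsequence has a further subsequence converging in $C^0_{loc}$ to one and the same limit itself converges in $C^0_{loc}$ — this forces full-sequence convergence. The delicate bookkeeping is to arrange the extractions so that a \emph{single} pluripolar set serves the whole sequence rather than one per subsequence; this is where one must combine the diagonal extraction over the fixed countable dense set of times with the capacitary Borel--Cantelli property of the Bedford--Taylor capacity, so that the bad set $\{z:w_j^{(m)}(r_n,z)\not\to w^{(m)}(r_n,z)\text{ for some }n\}$ is genuinely pluripolar rather than merely of Lebesgue measure zero. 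Granting that, one sets $E_m:=(J_m\times P)\cup\{(t,z):z\notin P,\ \partial_tu_j(t,z)\not\to\partial_tu(t,z)\}$, checks that $E_m$ is Borel, and concludes.
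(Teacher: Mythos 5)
Your overall architecture matches the paper's: argue slice-by-slice in $t$ with the concavified functions $u_j-C_mt^2$, reduce to a one-variable fact about concave functions (pointwise convergence on a dense/full set of times forces convergence of derivatives at every differentiability point of the limit — this is exactly \cite[Lemma 2.8]{GLZ21-2}, which the paper cites rather than reproving via dense times and $C^0_{loc}$ convergence), handle existence of the time derivatives with Lemma 2.7, and assemble the exceptional set by the Fubini computation over slices, using that bounded plurisubharmonic functions put no mass on pluripolar sets. All of that is sound and is essentially the paper's route.

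The genuine gap is the step you flag and then ``grant'': upgrading quasi-everywhere pointwise convergence at the dense times $r_n$ from a subsequence to the full sequence, with a single pluripolar set $P$. This cannot be extracted from the stated hypotheses. Convergence in capacity yields quasi-everywhere pointwise convergence only along subsequences whose capacities are summable (Borel--Cantelli needs summability; $cap\to0$ provides it only after extraction), and the subsequence-of-subsequence trick does not repair this, because the pluripolar exceptional set produced by each extraction depends on the subsequence: at a fixed point $(t,z)$ you never verify the premise ``every subsequence has a sub-subsequence of $\partial_t u_j(t,z)$ converging to $\partial_t u(t,z)$''. In fact no bookkeeping can close it: let $v_j=\max\bigl(A_j\log(\lvert z-c_j\rvert/R),-1\bigr)$ with $A_j\to0$ and the small balls $\{v_j=-1\}$ sweeping $\Omega$ so that every point lies in infinitely many of them; then $v_j\to0$ in capacity while $v_j(z)$ oscillates between values near $0$ and $-1$ at every $z$. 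Taking $u_j(t,z):=t\,v_j(z)$ and $u\equiv0$, all hypotheses of the lemma hold (uniformly Lipschitz and concave in $t$, convergence in capacity for every $t$), yet $\partial_t u_j=v_j$ converges at no point of $\Omega_T$; so the full-sequence statement you are after is simply unavailable, and only a subsequence version of the conclusion can be proved from these hypotheses. Note that the paper's own proof does not resolve this either: its first step asserts outright a Borel set $E_1$, negligible for every $dt\wedge(dd^c\varphi)^n$, off which $u_j\to u$ pointwise for the full sequence — precisely the statement your diagonal extraction can only deliver along a subsequence — and then applies \cite[Lemma 2.8]{GLZ21-2} plus Fubini. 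So you have correctly isolated the crux, but ``granting'' it is not a legitimate move: one must either settle for the subsequence formulation (which is what your argument actually proves, and which suffices for the way the lemma is used later, where subsequences are extracted anyway), or add a hypothesis such as pointwise a.e.\ convergence, monotonicity, or summability of the capacities.
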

\begin{proof}
    Since $u_j(t, \cdot) \rightarrow u(t, \cdot)$ in capacity for a.e. $t \in (0, T)$ as $j \rightarrow \infty$, there exists a Borel subset $E_1 \subset \Omega_T$ such that for all compact subset $K \subset \Omega$ and for all $\varphi \in PSH(\Omega)$ satisfying $-1 \leq \varphi \leq 0$, $u_j \rightarrow u$ as $j \rightarrow \infty$ in $((0, T) \times K) \setminus E_1$ and
    \begin{align*}
        \int_{E_1 \cap ((0, T) \times K)}dt \wedge (dd^c\varphi)^n = 0.
    \end{align*}
     By following the proof of \cite[Proposition 2.9]{GLZ21-2}, one can show that there exists a Borel set $E_1 \subset E \subset \Omega_T$ satisfying all the conditions. Indeed, fix $K$ and $\varphi$. By Lemma $2.7$, there exists a Borel subset $E_2 \subset \Omega_T$ such that $\frac{\partial u_j}{\partial t}(t, z)$, $\frac{\partial u}{\partial t}(t, z)$ exist for all $j$ and $(t, z) \in \Omega_T \setminus E_2$, and
    \begin{align*}
        \int_{E_2 \cap ((0, T) \times K)}dt \wedge (dd^c\varphi)^n = 0,
    \end{align*}
    where $E_2$ is independent with the choice of $K$ and $\varphi$.
    Let $\tilde{E} := E_1 \cup E_2$ and $\tilde{E}_z := \{t \in (0, T) ~\mid~ (t, z) \in \tilde{E}\}$. By \cite[Lemma 2.8]{GLZ21-2}, for each $z \in \Omega$, there exists a Borel subset $E_z \subset (0, T)$ such that $\tilde{E}_z \subset E_z$, $\ell(E_z \setminus \tilde{E}_z) = 0$, and $\lim_{j \rightarrow \infty}\frac{\partial u_j}{\partial t}(t, z) = \frac{\partial u}{\partial t}(t, z)$ in $((0, T) \times K)\setminus (E_z \times \Omega)$. We define $E := E_z \times \Omega$. By the Fubini's theorem, we have
    \begin{align*}
        \int_{E \cap ((0, T) \times K)}dt \wedge (dd^c\varphi)^n &= \int_K(dd^c\varphi)^n \int_{E_z}d\ell\\ &= \int_{K}(dd^c\varphi)^n\int_{\tilde{E}_z}d\ell \\ &=\int_{\tilde{E} \cap (0, T) \times K)}dt \wedge (dd^c\varphi)^n = 0. 
    \end{align*}
    Since the above equality holds independently with the choice of $K$ and $\varphi$, we get the conclusion.
\end{proof}
\section{Convergence Lemmas}
Let $\{u_j\}_{j = 1}^{\infty}$ be a sequence of uniformly bounded parabolic potentials and let $\{\mu_j\}_{j = 1}^{\infty}$ be a sequence of positive Borel measures in $\Omega$. Assume that $d\mu_j \rightarrow d\mu$ weakly as $j \rightarrow \infty$.
We study about the sufficient condition for the weak convergence of
\begin{align}\label{goal}
    e^{\frac{\partial u_j}{\partial t}+F(t, z, u_j)}d\mu_j \rightarrow e^{\frac{\partial u}{\partial t}+F(t, z, u)}d\mu \text{ as } j \rightarrow \infty.
\end{align}
When $d\mu_j$'s and $d\mu$ all have strictly positive density functions which are in $L^p(\Omega)$ for some $p > 1$, $u_j \rightarrow u$ in $L^1(\Omega_T, \ell \otimes \mu)$ and $\{u_j\}_{j = 1}^{\infty}$ is locally uniformly semi-concave in $(0, T)$,  the convergence $(\ref{goal})$ is proved in \cite[Proposition 2.9]{GLZ21-2}. Our goal is to prove the similar result when $d\mu_j$'s and $d\mu$ are all Monge-Amp\`ere measures of some bounded plurisubharmonic functions.
\begin{lemma}\label{integration by parts}
    Let $\psi \in PSH(\Omega) \cap L^{\infty}(\Omega)$ and $u \in \mathcal{P}(\Omega_T) \cap L^{\infty}(\Omega_T)$. Then for every smooth test function $\chi$ on $\Omega_T$,
    \begin{align*}
        \int_{\Omega_T}\chi \frac{\partial u}{\partial t} dt \wedge (dd^c\psi)^n = -\int_{\Omega_T}u\frac{\partial \chi}{\partial t} dt \wedge (dd^c\psi)^n.
    \end{align*}
\end{lemma}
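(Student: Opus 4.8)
The starting point is that, by Bedford--Taylor theory, $(dd^c\psi)^n$ is a fixed positive Radon measure $\nu$ on $\Omega$ which is finite on compact subsets (and charges no pluripolar set), so that $dt\wedge(dd^c\psi)^n$ is nothing but the product measure $\ell\otimes\nu$ on $\Omega_T=(0,T)\times\Omega$. The plan is therefore to use Fubini's theorem to reduce the asserted identity to a one–dimensional integration by parts in $t$ performed for each fixed $z\in\Omega$, and then to integrate the result back in $z$ against $\nu$.

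First I would record the regularity of $u$ in the time variable. Since $u\in\mathcal P(\Omega_T)$, for each subinterval $J\Subset(0,T)$ the estimate $(\ref{local uniform})$, used together with its counterpart obtained by interchanging $t$ and $s$, shows that $t\mapsto u(t,z)$ is $\kappa_J$-Lipschitz on $J$ for \emph{every} $z\in\Omega$, hence absolutely continuous on $J$; in particular $\tfrac{\partial u}{\partial t}(t,z)$ exists for $\ell$-a.e.\ $t$, and by Lemma~$2.7$, applied to a normalization of $\psi$, it exists off a Borel set $E\subset\Omega_T$ which is $dt\wedge(dd^c\psi)^n$-negligible. The function $\tfrac{\partial u}{\partial t}$ is Borel measurable (being, where it exists, the pointwise limit of the Borel difference quotients $k\bigl(u(t+\tfrac1k,z)-u(t,z)\bigr)$, and $u$ itself is jointly Borel, being upper semicontinuous in $z$ and continuous in $t$). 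Moreover $u$ is bounded on $\Omega_T$ (an essentially bounded plurisubharmonic slice is bounded, since $\{u(t,\cdot)<-M\}$ is open), and if we fix $K:=\operatorname{supp}\chi\Subset\Omega_T$ and choose $[a,b]\Subset(0,T)$ and a compact $L\subset\Omega$ with $K\subset(a,b)\times L$, then $\bigl|\tfrac{\partial u}{\partial t}\bigr|\le\kappa_{[a,b]}$ on $\bigl((a,b)\times\Omega\bigr)\setminus E$. Hence both $\chi\,\tfrac{\partial u}{\partial t}$ and $u\,\tfrac{\partial\chi}{\partial t}$ are bounded Borel functions supported in $K$, and they are $\ell\otimes\nu$-integrable because $\nu(L)<\infty$.

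Now Fubini's theorem gives
\[
\int_{\Omega_T}\chi\,\frac{\partial u}{\partial t}\,dt\wedge(dd^c\psi)^n=\int_{\Omega}\Bigl(\int_a^b\chi(t,z)\,\frac{\partial u}{\partial t}(t,z)\,dt\Bigr)\,d\nu(z),
\]
and for every $z$ the function $u(\cdot,z)$ is absolutely continuous on $[a,b]$ while $\chi(\cdot,z)\in C^1([a,b])$ vanishes at $t=a$ and $t=b$ (because $\operatorname{proj}_tK\subset(a,b)$); the classical integration-by-parts formula for the product of an absolutely continuous function and a $C^1$ function then yields
\[
\int_a^b\chi(t,z)\,\frac{\partial u}{\partial t}(t,z)\,dt=\bigl[\chi u\bigr]_{t=a}^{t=b}-\int_a^b u(t,z)\,\frac{\partial\chi}{\partial t}(t,z)\,dt=-\int_a^b u(t,z)\,\frac{\partial\chi}{\partial t}(t,z)\,dt.
\]
Integrating over $\Omega$ against $\nu$ and applying Fubini once more produces the desired identity. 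The argument is not conceptually hard; the only points requiring care — and hence the main obstacle — are the measure-theoretic bookkeeping: the joint measurability and integrability of $\tfrac{\partial u}{\partial t}$ on $\Omega_T$ (so that the exceptional set $E$ is genuinely negligible for $\ell\otimes\nu$, which is what Lemma~$2.7$ provides) and the consequent legitimacy of Fubini's theorem in both directions.
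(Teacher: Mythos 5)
Your argument is correct, and it reaches the identity by a different route than the paper. You use that, $\psi$ being independent of $t$, the measure $dt\wedge(dd^c\psi)^n$ is literally the product $d\ell\otimes(dd^c\psi)^n$, and you integrate by parts slice-wise in time: for every fixed $z$ the map $t\mapsto u(t,z)$ is locally Lipschitz, hence absolutely continuous, $\chi(\cdot,z)$ is smooth and vanishes at the endpoints of an interval $[a,b]$ containing the time-projection of $\mathrm{supp}\,\chi$, and two applications of Fubini (justified since $\chi\,\partial_t u$ and $u\,\partial_t\chi$ are bounded Borel functions supported in a compact subset of $\Omega_T$, and the set where $\partial_t u$ fails to exist has $\ell$-null slices for every $z$, as you note via Lemma 2.7 or directly from the Lipschitz property of $u(\cdot,z)$) convert the slice-wise identity into the stated one. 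The paper integrates in the opposite order: it sets $S(t)=\int_\Omega\chi(t,\cdot)u(t,\cdot)(dd^c\psi)^n$, proves $S$ is Lipschitz with compact support in $(0,T)$ using the uniform time-Lipschitz constant of $u$ and the finiteness of $(dd^c\psi)^n$ on compacts, and then combines differentiation under the integral sign with $\int_0^T S'(t)\,dt=0$. Both proofs rest on the same two ingredients (the locally uniform Lipschitz regularity of $u$ in $t$ and the fact that the spatial measure does not move in time), so neither is deeper; yours is more elementary and makes the measure-theoretic hypotheses (joint measurability of $\partial_t u$, identification with the product measure) fully explicit, while the paper's use of the auxiliary function $S$ bypasses the slice-wise bookkeeping at the cost of an implicit Leibniz-rule step. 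One small point: to upgrade the essential bound $u\in L^\infty(\Omega_T)$ to a pointwise bound on slices, the lower bound follows from openness of $\{u(t,\cdot)<-M\}$ as you say, but the upper bound requires the sub-mean value inequality rather than openness; this does not affect the argument.
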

\begin{proof}
    Fix $\chi$ a smooth test function on $\Omega_T$. Let 
    \begin{align*}
        S(t) := \int_{\Omega}\chi(t, \cdot)u(t, \cdot)(dd^c\psi)^n.
    \end{align*}
    Assume that $\text{supp}(\chi) \subset J \times K \Subset \Omega_T$, where $J$ and $K$ are compact subsets of $(0, T)$ and $\Omega$ respectively. Since $u \in \mathcal{P}(\Omega_T)$, there exists a constant $\kappa_J(u) > 0$ such that
    \begin{align*}
        u(t, z) \leq u(s, z) + \kappa_J(u)\lvert t-s\rvert
    \end{align*}
    for all $t, s \in J$ and $z \in \Omega$. Thus for every $t_0, t_1 \in J$,
    \begin{align*}
        \lvert S(t_0) - S(t_1) \rvert &= \left\vert \int_K (\chi(t_0, \cdot)u(t_0, \cdot) - \chi(t_1, \cdot)u(t_1, \cdot))(dd^c\psi)^n\right\vert \\
        &\leq \int_K \lvert \chi(t_0, \cdot) - \chi(t_1, \cdot)\rvert \lvert u(t_0, \cdot)\rvert (dd^c\psi)^n \\
        &\quad+\int_K \lvert \chi(t_1, \cdot)\rvert\lvert u(t_0, \cdot) - u(t_1, \cdot) \rvert (dd^c\psi)^n \\
        &\leq C\lvert t-t_0\rvert
    \end{align*}
    where $C = \left(\int_K(dd^c\psi)^n\right)\left(\left\Vert \frac{\partial \chi}{\partial t}\right\Vert_{L^{\infty}(J \times K)}\lVert u\rVert_{L^{\infty}(\Omega_T)}+\lVert \chi\rVert_{L^{\infty}(\Omega_T)}\kappa_J(u)\right) > 0$,
    which implies that $S$ is Lipschitz. Therefore
    \begin{align*}
        \int_{\Omega_T}\chi \frac{\partial u}{\partial t} dt \wedge (dd^c\psi)^n &= \int_0^T dt \int_{\Omega}\chi(t, \cdot)\frac{\partial u}{\partial t}(t, \cdot)(dd^c\psi)^n \\
        &= \int_0^TS'(t)dt - \int_0^T dt \int_{\Omega}\frac{\partial \chi}{\partial t}(t, \cdot)u(t, \cdot)(dd^c\psi)^n \\
        &= -\int_0^T dt \int_{\Omega}\frac{\partial \chi}{\partial t}(t, \cdot)u(t, \cdot)(dd^c\psi)^n \\
        &= -\int_{\Omega_T}u\frac{\partial \chi}{\partial t} dt \wedge (dd^c\psi)^n.
    \end{align*}
    Here we used the integration by parts for the second identity. The third identity holds by the Lipschitzness of $S$.
\end{proof}
By Lemma \ref{integration by parts}, we get the following corollary which is similar to  \cite[Lemma 4.21]{GZ17}.
\begin{corollary}
    Assume that $\psi_j$, $\psi \in PSH(\Omega) \cap L^{\infty}(\Omega)$ satisfy
    \begin{align*}
        (dd^c\psi_j)^n \rightarrow (dd^c\psi)^n \text{ weakly as } j \rightarrow \infty.
    \end{align*}
    Then for $u \in \mathcal{P}(\Omega_T) \cap L^{\infty}(\Omega_T)$,
    \begin{align*}
        \frac{\partial u}{\partial t}dt \wedge (dd^c\psi_j)^n \rightarrow \frac{\partial u}{\partial t} dt \wedge (dd^c\psi)^n \text{ weakly as } j \rightarrow \infty.
    \end{align*}
\end{corollary}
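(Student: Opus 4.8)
The plan is to test the target measures against an arbitrary smooth, compactly supported $\chi$ on $\Omega_T$ and to use the integration-by-parts identity of Lemma \ref{integration by parts} to replace the only-a.e.-defined time derivative of $u$ by the time derivative of the smooth function $\chi$. Fix such a $\chi$, with $\operatorname{supp}\chi \subset J \times K \Subset \Omega_T$, $J \Subset (0,T)$, $K \Subset \Omega$. Since $\psi_j \in PSH(\Omega) \cap L^\infty(\Omega)$, Lemma \ref{integration by parts} applies, and using that $(dd^c\psi_j)^n$ does not depend on $t$ (so $dt \wedge (dd^c\psi_j)^n$ is a product measure),
\begin{align*}
    \int_{\Omega_T} \chi\, \frac{\partial u}{\partial t}\, dt \wedge (dd^c\psi_j)^n = -\int_{\Omega_T} u\, \frac{\partial \chi}{\partial t}\, dt \wedge (dd^c\psi_j)^n = -\int_J \left( \int_\Omega u(t,\cdot)\, \frac{\partial \chi}{\partial t}(t,\cdot)\, (dd^c\psi_j)^n \right) dt,
\end{align*}
and the same identity holds with $\psi$ in place of $\psi_j$. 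Hence it suffices to show the right-hand sides converge as $j \to \infty$. The inner integrals are bounded uniformly in $t \in J$ and in $j$, by $\lVert u \rVert_{L^\infty(\Omega_T)} \lVert \partial_t\chi \rVert_{L^\infty}$ times the local mass $(dd^c\psi_j)^n(K)$, which stays bounded as $j \to \infty$ because $(dd^c\psi_j)^n \to (dd^c\psi)^n$ weakly (and is finite for each $j$ by Lemma \ref{CLN}). So, by dominated convergence in $t$ over the compact set $J$, the problem reduces to showing, for each fixed $t \in J$, that
\begin{align*}
    \int_\Omega v\, \rho\, (dd^c\psi_j)^n \longrightarrow \int_\Omega v\, \rho\, (dd^c\psi)^n \qquad (j \to \infty),
\end{align*}
where $v := u(t,\cdot) \in PSH(\Omega) \cap L^\infty(\Omega)$ and $\rho := \frac{\partial \chi}{\partial t}(t,\cdot) \in C^\infty_c(\Omega)$ are now fixed.

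This last convergence I would prove along the lines of \cite[Lemma 4.21]{GZ17}, using the quasicontinuity of bounded plurisubharmonic functions. Given $\varepsilon > 0$, choose an open set $U \subset \Omega$ with $cap(U, \Omega) < \varepsilon$ and a continuous function $v_\varepsilon$ on $\Omega$ with $\lvert v_\varepsilon \rvert \le \lVert v \rVert_{L^\infty(\Omega)}$ and $v_\varepsilon = v$ on $\Omega \setminus U$. Then $v_\varepsilon \rho$ is continuous with compact support, so $\int_\Omega v_\varepsilon \rho\, (dd^c\psi_j)^n \to \int_\Omega v_\varepsilon \rho\, (dd^c\psi)^n$ by the assumed weak convergence of the Monge--Amp\`ere measures, whereas
\begin{align*}
    \left\lvert \int_\Omega (v - v_\varepsilon)\, \rho\, (dd^c\psi_j)^n \right\rvert \le 2 \lVert v \rVert_{L^\infty} \lVert \rho \rVert_{L^\infty}\, (dd^c\psi_j)^n(U \cap K),
\end{align*}
and similarly with $\psi$ in place of $\psi_j$. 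Since the Monge--Amp\`ere measure of a bounded plurisubharmonic function assigns to a Borel set a mass controlled by (a power of its sup-norm times) the capacity of that set, these error terms should be $O(\varepsilon)$, and letting first $j \to \infty$ and then $\varepsilon \to 0$ would give the claim. Finally, specialising to $\chi \ge 0$ shows that the masses of $\frac{\partial u}{\partial t}\, dt \wedge (dd^c\psi_j)^n$ are locally bounded, so convergence against smooth test functions upgrades to weak convergence of Radon measures, which is the assertion.

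The step I expect to be the main obstacle is making the error estimates above uniform in $j$: the capacity bound $(dd^c\psi_j)^n(U) \le C\, \lVert \psi_j \rVert_{L^\infty(\Omega)}^n\, cap(U, \Omega)$ is uniform only if the $\psi_j$ are uniformly bounded in $L^\infty$. One therefore has to secure such uniform control — either from a common normalisation of the $\psi_j$ (note the conclusion depends only on the measures $(dd^c\psi_j)^n$ and is unaffected by adding constants to the $\psi_j$), or directly from the weak convergence, by bounding $\limsup_j (dd^c\psi_j)^n(U \cap K)$ in terms of $(dd^c\psi)^n$ of a slightly enlarged open set tested against a continuous cutoff. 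Once this uniform smallness is in hand, every remaining ingredient is standard, and the only genuinely new input beyond \cite{GLZ21-2} and \cite{GZ17} is the integration-by-parts identity of Lemma \ref{integration by parts}, which is precisely what lets us dispense with the time derivative of $u$.
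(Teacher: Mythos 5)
Your proposal follows the same route as the paper's proof: use Lemma \ref{integration by parts} to move the time derivative onto $\chi$, bound the slice integrals uniformly and apply dominated convergence in $t$, and reduce to the slice-wise statement that $\int_\Omega v\,\rho\,(dd^c\psi_j)^n \to \int_\Omega v\,\rho\,(dd^c\psi)^n$ for the fixed bounded plurisubharmonic function $v=u(t,\cdot)$. The paper disposes of this last step in one line by citing \cite[Lemma 4.21]{GZ17} (integration of bounded quasi-continuous functions against such weakly convergent Monge--Amp\`ere measures), whereas you attempt to re-derive it via quasi-continuity; and the obstacle you flag there is real and is not closed by either of your suggested fixes. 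Normalizing the $\psi_j$ by additive constants does nothing, since the estimate $(dd^c\psi_j)^n(E) \leq C\,\lVert\psi_j\rVert_{L^\infty}^n\, cap(E,\Omega)$ is governed by the oscillation of $\psi_j$, which an additive normalization does not control. The cutoff argument only gives $\limsup_j (dd^c\psi_j)^n(U\cap K) \leq (dd^c\psi)^n(V)$ for open sets $V$ containing $\overline{U\cap K}$, and smallness of $cap(U,\Omega)$ says nothing about $\overline{U\cap K}$ or $V$: an open set of arbitrarily small capacity can be dense (a union of tiny balls centered at a dense sequence), in which case $\overline{U\cap K}=K$ and the bound is useless. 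So as written your proof of the slice-wise convergence has a genuine gap exactly at the point you identified.

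The repair is either to invoke \cite[Lemma 4.21]{GZ17} as the paper does, or to add the hypothesis that $\{\psi_j\}_{j=1}^\infty$ is uniformly bounded, which makes your quasi-continuity argument go through verbatim via the uniform capacity estimate. Note that the corollary as stated in the paper does not include this uniformity, but every place where this circle of ideas is actually used (Lemma $3.3$, and its application in Theorem $4.3$, where the $\tilde\psi_j$ are constructed with uniform $L^\infty$ bounds) does assume or provide uniformly bounded potentials, so adding that hypothesis costs nothing in the applications. One small point in your favor: your observation that the local masses $(dd^c\psi_j)^n(K)$ are bounded directly by testing the weak convergence against a fixed continuous cutoff is cleaner than appealing to the elliptic Chern--Levine--Nirenberg inequality, which again implicitly wants uniform local bounds on the $\psi_j$.
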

\begin{proof}
    Fix $\chi$ a smooth test function on $\Omega_T$ and $t \in (0, T)$.
    Since $u(t, \cdot) \in PSH(\Omega) \cap L^{\infty}(\Omega)$, it follows from \cite[Lemma 4.21]{GZ17} that 
    \begin{align*}
    \int_{\Omega}\frac{\partial \chi}{\partial t}(t, \cdot)u(t, \cdot)(dd^c\psi_j)^n \rightarrow \int_{\Omega}\frac{\partial \chi}{\partial t}(t, \cdot)u(t, \cdot)(dd^c\psi)^n \text{ as } j \rightarrow \infty.
    \end{align*}
     By the elliptic Chern-Levine-Nirenberg inequality, $\left\{\int_{\Omega}\frac{\partial \chi}{\partial t}(t, \cdot)u(t, \cdot)(dd^c\psi_j)^n\right\}_{j = 1}^{\infty}$ is uniformly bounded. Therefore by the Lebesgue's convergence theorem for the integration with respect to $dt$ and Lemma $3.1$, we have
    \begin{align*}
        -\int_0^T dt \int_{\Omega}\frac{\partial \chi}{\partial t}(t, \cdot)u(t, \cdot)(dd^c\psi_j)^n \rightarrow &-\int_0^Tdt\int_{\Omega}\frac{\partial \chi}{\partial t}(t, \cdot)u(t, \cdot)(dd^c\psi)^n \\
        &= \int_{\Omega_T}\chi \frac{\partial u}{\partial t} dt \wedge (dd^c\psi)^n
    \end{align*}
    as $j \rightarrow \infty$. Therefore $\frac{\partial u}{\partial t}dt \wedge (dd^c\psi_j)^n \rightarrow \frac{\partial u}{\partial t} dt \wedge (dd^c\psi)^n$ as $j \rightarrow \infty$ in the weak sense of distributions in $\Omega_T$.
\end{proof}
We emphasize that the following lemma is the technical core of this paper. This improves the convergence result in \cite[Proposition 2.9]{GLZ21-2}. The proof is inspired by the linearization of the time derivatives used in \cite[Section 4]{GLZ21-2}. 
\begin{lemma}
    Assume that $\psi_j, \psi \in PSH(\Omega) \cap L^{\infty}(\Omega)$ satisfy
    \begin{itemize}
        \item $(dd^c\psi_j)^n \rightarrow (dd^c\psi)^n \text{ weakly as } j \rightarrow \infty$;
        \item $\{\psi_j\}_{j = 1}^{\infty}$ is uniformly bounded.
    \end{itemize}
    Assume that $u_j, u \in \mathcal{P}(\Omega_T) \cap L^{\infty}(\Omega_T)$ satisfy
    \begin{itemize}
        \item for almost every $t \in (0, T)$, $u_j(t, \cdot) \rightarrow u(t, \cdot)$ in capacity as $j \rightarrow \infty$;
        \item $\{u_j\}_{j = 1}^{\infty}$ is locally uniformly bounded;
        \item there exists a constant $\kappa_0 > 0$ satisfying
        \begin{align*}
            t\left\vert \frac{\partial u_j}{\partial t}(t, z)\right\vert \leq \kappa_0 \text{ for all } j
        \end{align*}
        for all $(t, z) \in \Omega_T$ where $\frac{\partial u_j}{\partial t}$ is well-defined;
        \item there exists a constant $C_0>0$ satisfying
        \begin{align*}
            \frac{\partial^2u_j}{\partial t^2}(t, z) \leq C_0t^{-2} \text{ for all } j
        \end{align*}
        in the sense of distributions in $(0, T)$ for all $z \in \Omega$.
    \end{itemize}
    Then
    \begin{align*}
        e^{\frac{\partial u_j}{\partial t}+F(t, z, u_j)}dt \wedge (dd^c\psi_j)^n \rightarrow e^{\frac{\partial u}{\partial t} + F(t, z, u)}dt \wedge (dd^c\psi)^n \text{ weakly }
    \end{align*}
    as $j \rightarrow \infty$.
\end{lemma}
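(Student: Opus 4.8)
The plan is to test the measures against an arbitrary nonnegative $\chi\in C_c(\Omega_T)$; by linearity of both sides in $\chi$ this reduces the lemma to proving
\[
\int_{\Omega_T}\chi\, e^{\frac{\partial u_j}{\partial t}+F(t,z,u_j)}\,dt\wedge(dd^c\psi_j)^n\ \longrightarrow\ \int_{\Omega_T}\chi\, e^{\frac{\partial u}{\partial t}+F(t,z,u)}\,dt\wedge(dd^c\psi)^n.
\]
Fix such a $\chi$, pick a compact box $J\times K\Subset(0,T)\times\Omega$ containing $\text{supp}(\chi)$ with $t\geq\delta>0$ on $J$, and put $N:=\sup_j\sup_{J\times K}|u_j|+\|u\|_{L^{\infty}(\Omega_T)}<\infty$ (finite since $\{u_j\}$ is locally uniformly bounded). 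On $\text{supp}(\chi)$ one has $0<e^{\frac{\partial u_j}{\partial t}}\leq e^{\kappa_0/\delta}$ and $|F(t,z,u_j)|\leq N_F$ for some $N_F=N_F(F,J,K,N)$, while $\sup_j\int_K(dd^c\psi_j)^n<\infty$ by the Chern--Levine--Nirenberg inequality applied to the uniformly bounded $\psi_j$; so all the measures in play have uniformly bounded mass on $\text{supp}(\chi)$, and the displayed convergence is what must be shown. The central point, in the spirit of Section $4$ of \cite{GLZ21-2}, is to linearize the time derivative: since $\frac{\partial^2 u_j}{\partial t^2}\leq C_0t^{-2}$ distributionally and $\frac{d^2}{dt^2}(C_0\log t)=-C_0t^{-2}$, the function $v_j(t,z):=u_j(t,z)+C_0\log t$ is concave in $t\in(0,T)$ for every $z\in\Omega$.

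For $\epsilon>0$ small enough that $t-\epsilon\geq\delta/2$ and $t\pm\epsilon$ stay in a fixed compact subinterval of $(0,T)$ for $t\in J$, introduce the backward and forward difference quotients $Q^-_\epsilon w(t,z):=\tfrac1\epsilon\bigl(w(t,z)-w(t-\epsilon,z)\bigr)$ and $Q^+_\epsilon w(t,z):=\tfrac1\epsilon\bigl(w(t+\epsilon,z)-w(t,z)\bigr)$. The tangent-line inequality for the concave map $t\mapsto v_j(t,z)$, at any $t$ of differentiability, together with $\frac{\partial v_j}{\partial t}=\frac{\partial u_j}{\partial t}+C_0/t$, gives pointwise (a.e.\ in $t$, every $z$)
\[
e^{-\tilde R_\epsilon(t)}\,e^{Q^+_\epsilon u_j(t,z)}\ \leq\ e^{\frac{\partial u_j}{\partial t}(t,z)}\ \leq\ e^{R_\epsilon(t)}\,e^{Q^-_\epsilon u_j(t,z)},
\]
where $R_\epsilon(t):=C_0\bigl(\tfrac{\log t-\log(t-\epsilon)}{\epsilon}-\tfrac1t\bigr)$ and $\tilde R_\epsilon(t):=C_0\bigl(\tfrac1t-\tfrac{\log(t+\epsilon)-\log t}{\epsilon}\bigr)$ are nonnegative by concavity of $\log$ and converge to $0$ as $\epsilon\to0^+$, uniformly on $J$. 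By Lemma $2.7$ the set of $(t,z)$ where $\frac{\partial u_j}{\partial t}$ is undefined is negligible for $dt\wedge(dd^c\psi_j)^n$, so these inequalities may be inserted inside the integrals against $\chi$; and from $t|\frac{\partial u_j}{\partial t}|\leq\kappa_0$ one gets a bound $|Q^{\pm}_\epsilon u_j(t,z)|\leq 2\kappa_0(\log 2)/\delta$ on $\text{supp}(\chi)$, uniformly in $j$ and in all small $\epsilon$.

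Now fix $\epsilon$ and let $j\to\infty$. Since $u_j(t,\cdot)\to u(t,\cdot)$ in capacity for a.e.\ $t$, differences of such sequences converge in capacity, so $Q^{\pm}_\epsilon u_j(t,\cdot)\to Q^{\pm}_\epsilon u(t,\cdot)$ in capacity for a.e.\ $t$; composing with the continuous $F(t,\cdot,\cdot)$ and with $\exp$ and using the uniform bounds above, $e^{Q^{\pm}_\epsilon u_j(t,\cdot)+F(t,\cdot,u_j(t,\cdot))}\to e^{Q^{\pm}_\epsilon u(t,\cdot)+F(t,\cdot,u(t,\cdot))}$ in capacity for a.e.\ $t$, with a common sup-bound. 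Because $\psi_j$ is uniformly bounded we have $(dd^c\psi_j)^n(E)\leq C_M\, cap(E,\Omega)$ for all Borel $E$, and the same for $\psi$; combining this uniform domination by capacity with the weak convergence $(dd^c\psi_j)^n\to(dd^c\psi)^n$ and a quasicontinuity approximation — in the spirit of the proof of Corollary $3.2$ via \cite[Lemma 4.21]{GZ17} — yields, for a.e.\ $t$,
\[
\int_\Omega\chi(t,\cdot)\,e^{Q^{\pm}_\epsilon u_j(t,\cdot)+F(t,\cdot,u_j)}(dd^c\psi_j)^n\ \longrightarrow\ \int_\Omega\chi(t,\cdot)\,e^{Q^{\pm}_\epsilon u(t,\cdot)+F(t,\cdot,u)}(dd^c\psi)^n.
\]
The inner integrals are bounded uniformly in $t$ and $j$ (Chern--Levine--Nirenberg again), so integrating in $t$ by dominated convergence and using the sandwich inequality gives
\[
\limsup_{j\to\infty}\int_{\Omega_T}\chi\,e^{\frac{\partial u_j}{\partial t}+F(t,z,u_j)}dt\wedge(dd^c\psi_j)^n\ \leq\ \int_{\Omega_T}\chi\,e^{R_\epsilon(t)}\,e^{Q^-_\epsilon u+F(t,z,u)}\,dt\wedge(dd^c\psi)^n,
\]
together with the matching bound $\liminf_{j\to\infty}\geq\int_{\Omega_T}\chi\,e^{-\tilde R_\epsilon(t)}\,e^{Q^+_\epsilon u+F(t,z,u)}\,dt\wedge(dd^c\psi)^n$.

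Finally, let $\epsilon\to0^+$. One has $R_\epsilon,\tilde R_\epsilon\to0$ uniformly on $J$, and $Q^{\pm}_\epsilon u(t,z)\to\frac{\partial u}{\partial t}(t,z)$ for $dt\wedge(dd^c\psi)^n$-a.e.\ $(t,z)$: for $z$ outside a pluripolar set $P$ the map $t\mapsto u(t,z)$ is locally Lipschitz, hence differentiable for a.e.\ $t$ with difference quotients converging to $\frac{\partial u}{\partial t}$, while $(dd^c\psi)^n(P)=0$ and the exceptional $t$'s are controlled by Lemma $2.7$ (with $\varphi=\psi/\|\psi\|_{L^\infty}$), as in its proof. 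Since $|Q^{\pm}_\epsilon u|$ is bounded uniformly in $\epsilon$ on $\text{supp}(\chi)$, dominated convergence with respect to $\chi\,dt\wedge(dd^c\psi)^n$ shows that both right-hand sides above tend to $\int_{\Omega_T}\chi\,e^{\frac{\partial u}{\partial t}+F(t,z,u)}dt\wedge(dd^c\psi)^n$, whence $\limsup_j\leq\int_{\Omega_T}\chi\,e^{\frac{\partial u}{\partial t}+F(t,z,u)}dt\wedge(dd^c\psi)^n\leq\liminf_j$, so equality holds and the weak convergence follows. I expect the main obstacle to be the middle step — transporting the capacity-convergence of the merely quasicontinuous, exponential-type integrands through the weak convergence of the Monge--Amp\`ere measures $(dd^c\psi_j)^n$ — which I plan to resolve using the uniform domination $(dd^c\psi_j)^n\leq C_M\, cap(\cdot,\Omega)$ granted by the uniform bound on $\psi_j$, together with the quasicontinuity argument already used for Corollary $3.2$; a secondary technical nuisance is the bookkeeping of the null sets on which $\frac{\partial u_j}{\partial t}$ and $\frac{\partial u}{\partial t}$ may fail to exist, which is precisely the role of Lemma $2.7$.
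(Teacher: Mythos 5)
Your proposal is correct, and it follows the same general strategy as the paper (linearizing $\frac{\partial u_j}{\partial t}$ by difference quotients via the concavity of $v_j=u_j+C_0\log t$, and handling the change of measure through quasicontinuity in the spirit of \cite[Lemma 4.21]{GZ17}), but the execution is genuinely different in a way worth noting. The paper splits the problem into two halves: first it compares $e^{\partial_t u_j+F(\cdot,\cdot,u_j)}$ with $e^{\partial_t u+F(\cdot,\cdot,u)}$ against the moving measures $dt\wedge(dd^c\psi_j)^n$, choosing one $\delta=\delta(\varepsilon)$ and estimating the terms $I_1$--$I_4$ uniformly in $j$, which forces it to invoke the integration-by-parts identity (Lemma $3.1$) to convert $\int\chi\bigl(\partial_t v_j(t,\cdot)-\partial_t v_j(t+\delta,\cdot)\bigr)dt\wedge(dd^c\psi_j)^n$ into an $O(\delta)$ quantity, and to invoke Lemma $2.9$ to transfer the bounds $t\lvert\partial_t u\rvert\le\kappa_0$ and the semi-concavity to the limit $u$; only afterwards does it replace $(dd^c\psi_j)^n$ by $(dd^c\psi)^n$ with the integrand frozen ($I_5$--$I_7$). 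You instead use the two-sided sandwich $e^{-\tilde R_\epsilon}e^{Q^+_\epsilon u_j}\le e^{\partial_t u_j}\le e^{R_\epsilon}e^{Q^-_\epsilon u_j}$ with $j$-independent error factors, pass $j\to\infty$ at fixed $\epsilon$, and only then let $\epsilon\to0$ acting on $u$ alone by dominated convergence; this order of limits removes any need for Lemma $3.1$ and Lemma $2.9$, at the price of needing the slightly stronger convergence statement "$f_j\to f$ in capacity, uniformly bounded and quasicontinuous, $(dd^c\psi_j)^n\to(dd^c\psi)^n$ weakly with the uniform domination $(dd^c\psi_j)^n\le C_M\,cap(\cdot,\Omega)$ implies $\int\chi f_j(dd^c\psi_j)^n\to\int\chi f(dd^c\psi)^n$", whereas the paper only uses the fixed-integrand version. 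Your two-term decomposition of that statement (capacity smallness of $\{|f_j-f|\ge\delta\}$ against the uniformly dominated $\nu_j$, plus Tietze-extension of $f$ off an open set of small capacity and weak convergence) is sound, since the uniform bound on $\psi_j$ does give the Borel-set domination after normalizing $\psi_j$ into $[-1,0]$; the remaining bookkeeping (negligibility of the non-differentiability sets via Lemma $2.7$, a.e.\ choice of $t$ with $t\pm\epsilon$ outside the exceptional time set, uniform bounds on $Q^\pm_\epsilon u_j$ from $t\lvert\partial_t u_j\rvert\le\kappa_0$ and local Lipschitzness) is handled correctly. In short: same key ideas, but a cleaner two-sided/limsup--liminf organization that trades the paper's uniform-in-$j$ $O(\delta)$ estimates for an interchanged order of limits.
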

\begin{proof}
    Fix $\chi$ a smooth positive test function in $\Omega_T$ and assume that $supp(\chi) \Subset J \times K \Subset \Omega_T$. By Lemma $2.7$, there exists a sequence $\{E_j\}_{j = 1}^{\infty}$ of Borel subsets of $\Omega_T$ such that for each $j$, $\frac{\partial u_j}{\partial t}(t, z)$ exists for all $(t, z) \in \Omega_T \setminus E_j$ and
    \begin{align*}
        \int_{E_j\cap ((0, T) \times K)} dt \wedge (dd^c\varphi)^n = 0
    \end{align*} for all $\varphi \in PSH(\Omega)$ satisfying $-1 \leq \varphi \leq 0$. Let $E = \bigcup_{j = 1}^{\infty}E_j$. Then $E \subset \Omega_T$ is a Borel subset such that for all $j$,
    \begin{align}\label{property of E}
        \int_{E \cap ((0, T) \times K)} dt \wedge (dd^c\psi_j)^n = \int_{E \cap ((0, T) \times K)}dt \wedge (dd^c\psi)^n = 0
    \end{align}
    and $\frac{\partial u_j}{\partial t}(t, z)$ exists for all $(t, z) \in \Omega_T \setminus E$. Hence we have 
    \begin{align}\label{core inequality 3}
        t\left\vert \frac{\partial u_j}{\partial t}(t, z)\right\vert \leq \kappa_0
    \end{align}
    for all $j$ and for all $(t, z) \in \Omega_T \setminus E$.\\
    \indent Note that $u_j(t, \cdot) \rightarrow u(t, \cdot)$ in capacity as $j \rightarrow \infty$ for a.e. $t \in (0, T)$ and the family $\{u_j\}_{j = 1}^{\infty}$ is locally uniformly semi-concave in $(0, T)$. Thus by Lemma $2.9$, there exist a Borel subset $G \subset \Omega_T$ such that for all $j$,
    \begin{align*}
        \int_{G \cap ((0, T) \times K)}dt \wedge (dd^c\psi_j)^n = \int_{G \cap ((0, T) \times K)}dt \wedge (dd^c\psi)^n = 0
    \end{align*} 
        and $\frac{\partial u_j}{\partial t}$ pointwisely converges to $\frac{\partial u}{\partial t}$ in $((0, T) \times K) \setminus G$ as $j \rightarrow \infty$.    
    Hence we have $t\left\vert \frac{\partial u}{\partial t}(t, z)\right\vert \leq \kappa_0$ for all $(t, z) \in ((0, T) \times K) \setminus G$. \\
    \indent Next, since $u_j \rightarrow u$ weakly as $j \rightarrow \infty$, $\frac{\partial^2 u}{\partial t^2}(t, z) \leq C_0t^{-2}$ holds in the sense of distributions in $(0, T)$ for all $z \in \Omega$. Indeed, let $\alpha$ and $\eta$ be smooth positive test functions on $(0, T)$ and $\Omega$ respectively. As $\frac{\partial^2 u_j}{\partial t^2}(t, z) \leq C_0t^{-2}$ holds in the sense of distributions in $(0, T)$ for all $z \in \Omega$ and $j$, we have
    \begin{align*}
         \int_{\Omega}\eta(z)dV\left(\int_{0}^T \frac{\partial^2\alpha}{\partial t^2}(t)u_j(t, \cdot)dt\right) \leq \int_{\Omega}\eta(z)dV\left(\int_0^T C_0\alpha(t)t^{-2}dt\right).
    \end{align*}
    It follows from the weak convergence of $u_j \rightarrow u$ that
    \begin{align*}
        \int_{\Omega}\eta(z) dV \left(\int_0^T\frac{\partial^2\alpha}{\partial t^2}(t) u(t, \cdot)dt\right) \leq \int_{\Omega}\eta(z)dV\left(\int_0^T C_0\alpha(t)t^{-2}dt\right).
    \end{align*}
    This implies that for $dV$-a.e. $z \in \Omega$,
    \begin{align}\label{semi concavity}
        \int_0^T \frac{\partial^2\alpha}{\partial t^2}(t)u(t, z)dt \leq \int_0^T C_0\alpha(t)t^{-2}dt.
    \end{align}
    As $U(z) := \int_0^T \frac{\partial ^2\alpha}{\partial t^2}(t)u(t, z)dt$ is a plurisubharmonic function in $\Omega$, (\ref{semi concavity}) holds for all $z \in \Omega$.
    \\
    \indent Fix $\varepsilon > 0$. We first show that
    \begin{align}\label{R1}
        \left(e^{\frac{\partial u_j}{\partial t}+F(t, z, u_j)} - e^{\frac{\partial u}{\partial t}+F(t, z, u)}\right)dt \wedge (dd^c\psi_j)^n \rightarrow 0 \text{ weakly }
    \end{align}
    as $j \rightarrow \infty$. \\
    \indent In fact, for sufficiently small $0 < \delta < T - \sup_{t \in J}t$ which will be chosen later, 
    \begin{align*}
        &\left\vert \int_{\Omega_T}\chi \left(e^{\frac{\partial u_j}{\partial t}+F(t, z, u_j)}-e^{\frac{\partial u}{\partial t}+F(t, z, u)}\right) dt \wedge (dd^c\psi_j)^n \right\vert \\
        &\quad \leq C_1\int_{\Omega_T\setminus (E \cap G)}\chi \left\vert \frac{\partial u_j}{\partial t} - \frac{\partial u}{\partial t}\right\vert dt \wedge (dd^c\psi_j)^n \\ &\quad \quad +C_1\int_{\Omega_T\setminus (E\cap G)}\chi \lvert F(t, z, u_j) - F(t, z, u)\rvert dt \wedge (dd^c\psi_j)^n\\
        &\quad \leq C_1\int_{(J \times K) \setminus E}\chi \left\vert \frac{\partial u_j}{\partial t}(t, z) - \frac{u_j(t+\delta, z) - u_j(t, z)}{\delta}\right\vert dt \wedge (dd^c\psi_j)^n \\
        &\quad \quad + C_1\int_{(J \times K)\setminus G}\chi\left\vert\frac{\partial u}{\partial t}(t, z) - \frac{u(t+\delta, z) - u(t, z)}{\delta} \right\vert dt \wedge (dd^c\psi_j)^n \\
        &\quad \quad + C_1\int_{(J \times K)
        }\chi \left\vert \frac{u_j(t+\delta, z) - u_j(t, z)}{\delta} - \frac{u(t+\delta, z) - u(t, z)}{\delta}\right\vert dt \wedge (dd^c\psi_j)^n \\
        &\quad \quad + C_1\int_{\Omega_T
        }\chi \lvert F(t, z, u_j) - F(t, z, u)\rvert dt \wedge (dd^c\psi_j)^n\\
        &\quad =: I_1 + I_2 + I_3 + I_4,
    \end{align*}
    where $C_1 = \sup_{t \in J}e^{\frac{\kappa_0}{t}+M_F} > 0$. We used the Lipschitzness of the exponential function for the first inequality.\\
    \indent For each $j$, let $v_j(t, z) := u_j(t, z) + C_0 \log t$. By the assumption on $\frac{\partial^2u_j}{\partial t^2}$, $v_j(t, z)$ is (weakly) concave in $(0, T)$ for all $z \in \Omega$. Hence, \begin{align}\label{core inequality}
        \delta\frac{\partial v_j}{\partial t}(t, z) \geq v_j(t+\delta, z) - v_j(t, z)
    \end{align} for all $(t, z) \in \Omega_T\setminus E$ since for all $j$, $\frac{\partial u_j}{\partial t}(t, z)$ exists for all $(t, z) \in \Omega_T \setminus E$. Recall that $E$ satisfies (\ref{property of E}). Similarly, there exists a Borel subset $\tilde E \subset \Omega_T$ such that for all $j$, 
    \begin{align*}
        \int_{\tilde E \cap ((0, T) \times K)}dt \wedge (dd^c\psi_j)^n = \int_{\tilde E \cap ((0, T) \times K)}dt \wedge (dd^c\psi)^n = 0
    \end{align*}
    for all $j$, $\frac{\partial u_j}{\partial t}(t+\delta, z)$ exists for all $(t, z) \in \Omega_T \setminus \tilde E$, and
    \begin{align}\label{core inequality2}
        v_j(t+\delta, z) - v_j(t, z) \geq \delta \frac{\partial v_j}{\partial t}(t+\delta, z)
    \end{align}
    holds for all $(t, z) \in \Omega_T \setminus \tilde E$.
    \\ \indent First, using (\ref{core inequality}) and (\ref{core inequality2}), 
    \begin{align*}
        I_1 &= C_1\int_{(J \times K)\setminus E}\chi \left\vert \frac{\partial v_j}{\partial t}(t, z) - \frac{C_0}{t} - \frac{u_j(t+\delta, z) - u_j(t, z)}{\delta}\right\vert dt \wedge (dd^c\psi_j)^n \\
        &\leq C_1\int_{(J \times K) \setminus E}\chi\left(\frac{\partial v_j}{\partial t}(t, z) - \frac{v_j(t+\delta, z) - v_j(t, z)}{\delta}\right)dt \wedge (dd^c\psi_j)^n\\
        &\quad + C_1C_0\int_{(J \times K)\setminus E}\chi \left\vert \frac{1}{t}-\frac{\log(t+\delta) - \log t}{\delta}\right\vert dt \wedge  (dd^c\psi_j)^n \\
        &\leq C_1\int_{(J \times K) \setminus (E \cap \tilde E)}\chi \left(\frac{\partial v_j}{\partial t}(t, z) - \frac{\partial v_j}{\partial t}(t+\delta, z)\right)dt \wedge (dd^c\psi_j)^n \\
        &\quad + C_1C_0\int_{J \times K}\chi \left\vert \frac{1}{t}-\frac{\log(t+\delta) - \log t}{\delta}\right\vert dt \wedge  (dd^c\psi_j)^n.
    \end{align*}
    By Lemma \ref{CLN}, there exists a constant $C_2 > 0$ such that
    \begin{align} \label{CLN1}
        \int_{J \times K}dt \wedge (dd^c\psi_j)^n \leq C_2.
    \end{align}
    Now we choose
    \begin{align} \label{h1}
        \delta = \min\left\{\frac{\varepsilon\inf_{t \in J}t^2}{8C_0C_1C_2\lVert \chi\rVert_{L^{\infty}(\Omega_T)}}, \frac{\varepsilon \inf_{t \in J}t}{8C_1C_2(\kappa_0+C_0)\left\Vert \frac{\partial \chi}{\partial t}\right\Vert_{L^{\infty}(\Omega_T)}}, T-\sup_{t \in J}t\right\}.
    \end{align}
    By (\ref{CLN1}) and (\ref{h1}),
    \begin{align*}
        &C_0C_1\int_{J \times K}\chi \left\vert \frac{1}{t}-\frac{\log(t+\delta) - \log t}{\delta}\right\vert dt \wedge  (dd^c\psi_j)^n \\&\quad \leq C_0C_1\int_{J \times K}\chi \left\vert \frac{1}{t}-\frac{1}{t+\delta}\right\vert dt \wedge (dd^c\psi_j)^n \\&\quad \leq C_0C_1\delta\int_{J \times K} \frac{\chi}{t^2} dt \wedge (dd^c\psi_j)^n \leq \frac{\varepsilon}{8}.
    \end{align*}
    Using (\ref{core inequality 3}), (\ref{core inequality}),
    (\ref{core inequality2}), (\ref{CLN1}), (\ref{h1}) and Lemma $3.1$, we get
    \begin{align*}
        &C_1\int_{(J \times K) \setminus (E \cap \tilde E)}\chi \left(\frac{\partial v_j}{\partial t}(t, z) - \frac{\partial v_j}{\partial t}(t+\delta, z)\right) dt \wedge (dd^c\psi_j)^n \\
        \quad &= C_1\int_{(J \times K) \setminus (E \cap \tilde E)}\frac{\partial \chi}{\partial t} (v_j(t+\delta, z) - v_j(t, z))dt \wedge (dd^c\psi_j)^n \\
        \quad &\leq C_1\delta\int_{(J \times K) \setminus (E \cap \tilde 
        E)}\left\vert \frac{\partial \chi}{\partial t}\right\vert \max\left(\left\vert \frac{\partial v_j}{\partial t}(t, z)\right\vert, \left\vert \frac{\partial v_j}{\partial t}(t+\delta, z)\right\vert\right)  dt \wedge (dd^c\psi_j)^n \\
        \quad &\leq C_1\delta\int_{(J \times K) \setminus (E \cap \tilde E)}\left\vert \frac{\partial \chi}{\partial t} \right\vert \left\vert \frac{\kappa_0+C_0}{t}\right\vert dt \wedge (dd^c\psi_j)^n \leq \frac{\varepsilon}{8}.
    \end{align*}
    Indeed, the first equality holds by Lemma $3.1$. The first inequality holds by (\ref{core inequality}) and (\ref{core inequality2}), and the second inequality holds by (\ref{core inequality 3}). The last inequality holds by (\ref{CLN1}) and (\ref{h1}).
    Hence for all $j$,
    \begin{align} \label{I1}
        I_1 \leq \frac{\varepsilon}{8}+\frac{\varepsilon}{8} = \frac{\varepsilon}{4}.
    \end{align}
    \indent Secondly, we estimate $I_2$ by the same argument for the estimate of $I_1$. For all $j$, we have
    \begin{align} \label{I2}
        I_2 = C_1\int_{(J \times K)\setminus G}\chi \left\vert \frac{\partial u}{\partial t}(t, z) - \frac{u(t+\delta, z) - u(t, z)}{\delta}\right\vert dt \wedge (dd^c\psi_j)^n \leq \frac{\varepsilon}{4}.
    \end{align}
    \indent Next, we estimate $I_3$ as follows.
    \begin{equation}\label{I3-1}
        \begin{aligned}
            I_3 &\leq \frac{C_1}{\delta}\int_{J \times K}\chi \lvert u_j(t+\delta, z) - u(t+\delta, z)\rvert dt \wedge (dd^c\psi_j)^n\\ &\quad + \frac{C_1}{\delta}\int_{J \times K}\chi \lvert u_j(t, z) - u(t, z)\rvert dt \wedge (dd^c\psi_j)^n.
        \end{aligned}
    \end{equation}
    Since $u_j(t, \cdot) \rightarrow u(t, \cdot)$ in capacity as $j \rightarrow \infty$ for a.e. $t \in (0, T)$, by \cite[Theorem 4.26]{GZ17}, there exists $j_1 > 0$ such that for all $j > j_1$, 
    \begin{equation}
        \begin{aligned} \label{I3-2}
            &\frac{C_1}{\delta}\int_{J \times K}\chi \lvert u_j(t+\delta, z) - u(t+\delta, z)\rvert dt \wedge (dd^c\psi_j)^n\\ &\quad + \frac{C_1}{\delta}\int_{J \times K}\chi \lvert u_j(t, z) - u(t, z)\rvert dt \wedge (dd^c\psi_j)^n \leq \frac{\varepsilon}{4}.
        \end{aligned}
    \end{equation}
    Therefore by (\ref{I3-1}) and (\ref{I3-2}), for all $j > j_1$,
    \begin{align}\label{I3}
        I_3 \leq \frac{\varepsilon}{4}.
    \end{align}
    \indent Again by the convergence of $u_j(t, \cdot) \rightarrow u(t, \cdot)$ in capacity for a.e. $t \in (0, T)$ and continuity of $F$, there exists $j_2 > 0$ such that for all $j > j_2$, 
    \begin{align} \label{I4}
        I_4 \leq \frac{\varepsilon}{4}.
    \end{align}
    \indent Combining the estimates of $I_1$, $I_2$, $I_3$ and $I_4$ in (\ref{I1}), (\ref{I2}), (\ref{I3}) and (\ref{I4}), we get
    \begin{align*}
        \left\vert \int_{\Omega_T}\chi \left(e^{\frac{\partial u_j}{\partial t}+F(t, z, u_j)}-e^{\frac{\partial u}{\partial t}+F(t, z, u)}\right) dt \wedge (dd^c\psi_j)^n \right\vert \leq \varepsilon
    \end{align*}
    for all $j > \max\{j_1, j_2\}$, which implies (\ref{R1}). \\
    \indent Now, we show that
    \begin{align}\label{R2}
        e^{\frac{\partial u}{\partial t}+F(t, z, u)}dt \wedge (dd^c\psi_j)^n \rightarrow e^{\frac{\partial u}{\partial t}+F(t, z, u)}dt \wedge (dd^c\psi)^n \text{ weakly }
    \end{align}
    as $j \rightarrow \infty$. Indeed, we use the similar arguments in the proof of (\ref{R1}). For $\delta$ chosen in (\ref{h1}), we have
    \begin{align*}
        &\left\vert \int_{\Omega_T}\chi e^{\frac{\partial u}{\partial t}+F(t, z, u)}dt \wedge \{(dd^c\psi_j)^n - (dd^c\psi)^n\}\right\vert \\
        &\quad\leq \left\vert \int_{J \times K}\chi \left(e^{\frac{\partial u}{\partial t}+F(t, z, u)} - e^{\frac{u(t+\delta, z) - u(t, z)}{\delta} + F(t, z, u)}\right) dt \wedge (dd^c\psi_j)^n\right\vert \\
        &\quad \quad + \left\vert \int_{J \times K}\chi \left(e^{\frac{\partial u}{\partial t}+F(t, z, u)} - e^{\frac{u(t+\delta, z) - u(t, z)}{\delta} + F(t, z, u)}\right) dt \wedge (dd^c\psi)^n\right\vert \\
        &\quad \quad + \left\vert \int_{J \times K}\chi e^{\frac{u(t+\delta, z) - u(t, z)}{\delta}+F(t, z, u)}dt \wedge \{(dd^c\psi_j)^n - (dd^c\psi)^n\}\right\vert \\
        &\quad =: I_5 + I_6 + I_7.
    \end{align*}
    \indent We start with the estimate for $I_5$.
    \begin{align*}
        I_5 &= \left\vert \int_{J \times K}\chi e^{F(t, z, u)}\left(e^{\frac{\partial u}{\partial t}} - e^{\frac{u(t+\delta, z) - u(t, z)}{\delta}}\right) dt \wedge (dd^c\psi_j)^n \right\vert\\
        &\leq e^{M_F}\int_{J \times K}\chi\left\vert e^{\frac{\partial u}{\partial t}} - e^{\frac{u(t+\delta, z) - u(t, z)}{\delta}}\right\vert dt \wedge (dd^c\psi_j)^n\\
        &\leq C_3\int_{(J \times K)\setminus G}\chi\left\vert \frac{\partial u}{\partial t} - \frac{u(t+\delta, z) - u(t, z)}{\delta}\right\vert dt \wedge (dd^c\psi_j)^n,
    \end{align*}
    where $C_3 = \sup_{t \in J}e^{\frac{\kappa_0}{t}+\frac{2\lVert u\rVert_{L^{\infty}(\Omega_T)}}{\delta}+M_F} > 0$.
    By repeating the proof for the estimate of $I_1$, one can show that for all $j$,
    \begin{align}\label{I5}
        I_5 \leq \frac{\varepsilon}{4}.
    \end{align}
    \indent Similarly, for all $j$,
    \begin{align}\label{I6}
        I_6 \leq \frac{\varepsilon}{4}. 
    \end{align}
    \indent Finally, we estimate $I_7$ as follows. 
    \begin{align*}
        &I_7 = \left\vert \int_{J \times K}\chi(t, z)e^{\frac{u(t+\delta, z) - u(t, z)}{\delta} + F(t, z, u)}dt \wedge \{(dd^c\psi_j)^n - (dd^c\psi)^n\}\right\vert \\
        &\quad \leq \int_J dt \left\vert \int_{K}\chi(t, z)e^{\frac{u(t+\delta, z) - u(t, z)}{\delta}+F(t, z, u)}\{(dd^c\psi_j)^n - (dd^c\psi)^n\}\right\vert. 
    \end{align*}
    Let $\Phi_j(t) := \left\vert \int_{K}\chi(t, z)e^{\frac{u(t+\delta, z) - u(t, z)}{\delta}+F(t, z, u)}\{(dd^c\psi_j)^n - (dd^c\psi)^n\}\right\vert$.
    By the definition, $\Phi_j(t) \geq 0$ for all $t \in J$ and
    \begin{equation}
        \begin{aligned}\label{bdd}
            \Phi_j(t) &\leq \int_{K}\chi(t, z)e^{\frac{u(t+\delta, z) - u(t, z)}{\delta}+F(t, z, u)}(dd^c\psi_j)^n \\&\quad + \int_{K}\chi(t, z)e^{\frac{u(t+\delta, z) - u(t, z)}{\delta}+F(t, z, u)}(dd^c\psi)^n \\
        &\leq e^{\frac{2\lVert u\rVert_{L^{\infty}(\Omega_T)}}{\delta}+M_F}\lVert \chi\rVert_{L^{\infty}(\Omega_T)}\left(\int_{K}(dd^c\psi_j)^n+\int_{K}(dd^c\psi)^n\right) \\
         &\leq 2C_4e^{\frac{2\lVert u\rVert_{L^{\infty}(\Omega_T)}}{\delta}+M_F}\lVert \chi\rVert_{L^{\infty}(\Omega_T)}
        \end{aligned}
    \end{equation}
    for some uniform constant $C_4$ by the elliptic Chern-Levine-Nirenberg inequality. Note that for each $t \in (0, T)$, the map $z \mapsto \frac{u(t+\delta, z) - u(t, z)}{\delta}+F(t, z, u)$ is quasi-continuous in $\Omega$ since $u(t+\delta, \cdot)$, $u(t, \cdot) \in PSH(\Omega)$ and $F$ is continuous. By \cite[Lemma 4.21]{GZ17}, for every $t \in (0, T)$ we have
    \begin{equation}\label{con}
        \begin{aligned}
            \Phi_j(t) = \left\vert \int_{\Omega}\chi(t, \cdot)e^{\frac{u(t+\delta, z) - u(t, z)}{\delta}+F(t, z, u)}\{(dd^c\psi_j)^n - (dd^c\psi)^n\}\right\vert \rightarrow 0
        \end{aligned}
    \end{equation}
    as $j \rightarrow \infty$. It follows from (\ref{bdd}) and (\ref{con}) that there exists $j_3 > 0$ such that for all $j > j_3$,
    \begin{align}\label{I7}
        I_7 \leq \int_J \Phi_j(t)dt \leq \frac{\varepsilon}{2}.
    \end{align}
    \indent Combining the estimates for $I_5$, $I_6$ and $I_7$ in (\ref{I5}), (\ref{I6}) and (\ref{I7}), we have
    \begin{align*}
        \left\vert \int_{\Omega_T}\chi e^{\frac{\partial u}{\partial t}+F(t, z, u)}dt \wedge \{(dd^c\psi_j)^n - (dd^c\psi)^n\} \right\vert \leq \varepsilon
    \end{align*}
    for all $j > j_3$, which implies (\ref{R2}). \\
    \indent Finally, it follows from (\ref{R1}) and (\ref{R2}) that
    \begin{align*}
        &\int_{\Omega_T}\chi e^{\frac{\partial u_j}{\partial t}+F(t, z, u_j)}dt \wedge (dd^c\psi_j)^n \rightarrow \int_{\Omega_T}\chi e^{\frac{\partial u}{\partial t}+F(t, z, u)}dt \wedge (dd^c\psi)^n 
    \end{align*}
    as $j \rightarrow \infty$. 
\end{proof}
\section{Bounded Subsolution Theorem}
\subsection{Compactly Supported Measures}
We first observe the relationship between complex Monge-Amp\`ere flows and complex Monge-Amp\`ere equations.
\begin{lemma}
    Let $h_1 \in C((0, T) \times \partial \Omega)$. Assume that there exists a function $v \in \mathcal{P}(\Omega_T) \cap L^{\infty}(\Omega_T)$ satisfying
    \begin{align*}
        \begin{cases}
            &dt \wedge (dd^cv)^n \geq e^{\frac{\partial v}{\partial t}+F(t, z, v)}dt \wedge d\mu \text{ in } \Omega_T, \\
            &\lim_{(t, z) \rightarrow (\tau, \zeta)}v(t, z) = h_1(\tau, \zeta) \text{ for all } (\tau, \zeta) \in (0, T) \times \partial \Omega.
        \end{cases}
    \end{align*}
    If $\mu$ is compactly supported or $h_1 \equiv 0$, then there exists a function $\varphi \in PSH(\Omega) \cap L^{\infty}(\Omega)$ satisfying
    \begin{align}\label{4-1-0}
        \begin{cases}
            &(dd^c\varphi)^n \geq d\mu \text{ in } \Omega, \\
            &\lim_{z \rightarrow \partial \Omega}\varphi(z) = 0.
        \end{cases}
    \end{align}
    \begin{proof}
        Fix $t_0 \in (0, T)$ satisfying
        \begin{align}\label{4-1-1}
            (dd^cv(t_0, \cdot))^n \geq e^{\frac{\partial v}{\partial t}(t_0, \cdot)+F(t_0, \cdot, v)}d\mu.
        \end{align}
        Indeed, such $t_0$ exists by \cite[Proposition 3.2]{GLZ21-2}. Since $v \in \mathcal{P}(\Omega_T) \cap L^{\infty}(\Omega_T)$, 
        \begin{align}\label{4-1-2}
            \sup_{z \in \Omega}\left\vert \limsup_{s \rightarrow 0}\frac{v(t_0+s, z) - v(t_0, z)}{s} \right\vert - \inf_{z \in \Omega}F(t_0, z, -\lVert v\rVert_{L^{\infty}(\Omega_T)})\leq M
        \end{align}
        for some constant $M > 0$.
        Let $\varphi(z) := e^{\frac{M}{n}}v(t_0, z) \in PSH(\Omega) \cap L^{\infty}(\Omega)$. Combining (\ref{4-1-1}) and (\ref{4-1-2}), we have
        \begin{align*}
            \begin{cases}
                &(dd^c\varphi)^n = e^M(dd^cv(t_0, \cdot))^n \geq e^{M+\frac{\partial v}{\partial t}(t_0, \cdot)+F(t_0, \cdot, v)}d\mu \geq d\mu, \\
                &\lim_{z \rightarrow \zeta}\varphi(z) = \lim_{z \rightarrow \zeta} e^{\frac{M}{n}}v(t_0, z) = e^{\frac{M}{n}}h_1(t_0, \zeta) \text{ for all } \zeta \in \partial \Omega.
            \end{cases}
        \end{align*}
        If $h_1 \equiv 0$, the proof is done. If $\mu$ is compactly supported, one can modify $\varphi$ so that $(dd^c\varphi)^n \geq d\mu$ and $\lim_{z \rightarrow \partial \Omega}\varphi(z) = 0$.
    \end{proof}
\end{lemma}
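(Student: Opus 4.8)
The plan is to freeze the flow at a suitable time slice and thereby reduce the statement to an elliptic one. First I would invoke \cite[Proposition 3.2]{GLZ21-2} to pick a time $t_0\in(0,T)$ at which the parabolic subsolution inequality for $v$ restricts to an honest elliptic inequality of Monge-Amp\`ere measures,
\[
  (dd^cv(t_0,\cdot))^n\ \geq\ e^{\frac{\partial v}{\partial t}(t_0,\cdot)+F(t_0,\cdot,v)}\,d\mu\quad\text{in }\Omega.
\]
Since $v\in\mathcal{P}(\Omega_T)\cap L^\infty(\Omega_T)$ is locally uniformly Lipschitz in $t$, the time difference quotients of $v$ are uniformly bounded near $t_0$, so $\frac{\partial v}{\partial t}(t_0,\cdot)$ is bounded below; because $F$ is continuous and increasing in its last variable, the whole exponent is bounded below by a constant $-M$. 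Hence $(dd^cv(t_0,\cdot))^n\geq e^{-M}d\mu$, and rescaling by the factor $e^{M/n}$ shows that $\varphi:=e^{M/n}v(t_0,\cdot)\in PSH(\Omega)\cap L^\infty(\Omega)$ satisfies $(dd^c\varphi)^n\geq d\mu$ in $\Omega$.

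It then remains only to fix the boundary values. The function $\varphi$ has boundary values $e^{M/n}h_1(t_0,\cdot)$ on $\partial\Omega$, so if $h_1\equiv0$ the proof is complete. If instead $\mu$ is compactly supported, I would glue $\varphi$ with a boundary barrier. Let $\rho$ be a smooth strictly plurisubharmonic defining function of the strictly pseudoconvex domain $\Omega$ (so $\rho<0$ in $\Omega$, $\rho=0$ on $\partial\Omega$, and $\rho$ is bounded), fix an open set $U$ with $\text{supp}(\mu)\Subset U\Subset\Omega$, and set
\[
  \tilde\varphi:=\max\bigl(\varphi-S,\ A\rho\bigr),
\]
where $S>\sup_\Omega\varphi$ and $A>0$ is chosen so large that $A\rho<\varphi-S$ on $U$, which is possible since $\rho$ is bounded away from $0$ on $\overline{U}$ while $\varphi-S$ is bounded. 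Then $\tilde\varphi\in PSH(\Omega)\cap L^\infty(\Omega)$ and it coincides with $\varphi-S$ on $U$, so $(dd^c\tilde\varphi)^n=(dd^c\varphi)^n\geq d\mu$ there; since $\mu$ is carried by $U$, this gives $(dd^c\tilde\varphi)^n\geq d\mu$ on all of $\Omega$ by locality of the Monge-Amp\`ere operator. Near $\partial\Omega$, where $A\rho$ is close to $0$ while $\varphi-S$ stays uniformly negative, we have $\tilde\varphi=A\rho$, hence $\lim_{z\to\partial\Omega}\tilde\varphi(z)=0$.

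The only point that takes real care is the compactly supported case: the constants $S$ and $A$ must be chosen in the right order so that the $\max$-gluing genuinely equals $\varphi-S$ on a neighbourhood of $\text{supp}(\mu)$ (which preserves the measure estimate) and genuinely equals the barrier $A\rho$ near $\partial\Omega$ (which forces the boundary values to vanish). This is entirely quantitative, using only the uniform bounds on $\varphi$ together with $\rho<0$ on $\overline{U}$ and $\rho\to0$ at $\partial\Omega$; everything else — the existence of a good slice $t_0$, the uniform boundedness of the time difference quotients furnished by the parabolic-potential hypothesis, and the scaling law $(dd^c(\lambda\psi))^n=\lambda^n(dd^c\psi)^n$ for $\lambda>0$ — is routine.
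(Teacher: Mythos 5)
Your argument is correct and follows essentially the same route as the paper's: freeze the flow at a good time slice $t_0$ via \cite[Proposition 3.2]{GLZ21-2}, bound the exponent $\frac{\partial v}{\partial t}(t_0,\cdot)+F(t_0,\cdot,v)$ below by $-M$ using the local uniform Lipschitzness of $v$ and the monotonicity of $F$, and rescale to $\varphi=e^{M/n}v(t_0,\cdot)$. The only difference is that where the paper merely remarks that in the compactly supported case ``one can modify $\varphi$,'' you carry out this modification explicitly with the gluing $\tilde\varphi=\max(\varphi-S,\,A\rho)$, which is exactly the intended standard construction and is done correctly.
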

\begin{remark}
    By \cite[Theorem A]{Ko95}, (\ref{4-1-0}) implies that there exists a function $\psi \in PSH(\Omega) \cap L^{\infty}(\Omega)$ satisfying
    \begin{align*}
    \begin{cases}
        &(dd^c\psi)^n = d\mu \text{ in } \Omega,\\
        &\lim_{z \rightarrow \partial \Omega}\psi(z) = 0.
    \end{cases}
    \end{align*}
\end{remark}
The result of Lemma $4.1$ infers that under the assumption of the compact support for the given measure, existence of a bounded plurisubharmonic function satisfying (\ref{4-1-0}) is a necessary condition for the solvability of (\ref{CD}). We will now prove that it is also a sufficient condition.
\begin{theorem}
    Assume that $\mu$ is compactly supported in $\Omega$. If there exists a function $\varphi \in PSH(\Omega) \cap L^{\infty}(\Omega)$ satisfying
    \begin{align}\label{4-2-1}
        \begin{cases}
            &(dd^c\varphi)^n \geq d\mu \text{ in } \Omega, \\
            &\lim_{z \rightarrow \partial \Omega}\varphi(z) = 0,
        \end{cases}
    \end{align}
    then there exists a function $u \in \mathcal{P}(\Omega_T) \cap L^{\infty}(\Omega_T)$ satisfying
    \begin{align*}
        \begin{cases}
            &dt \wedge (dd^cu)^n = e^{\frac{\partial u}{\partial t}+F(t, z, u)}dt \wedge d\mu \text{ in } \Omega_T, \\
            &\lim_{(t, z) \rightarrow (\tau, \zeta)}u(t, z) = h(\tau, \zeta) \text{ for all } (\tau, \zeta) \in [0, T) \times \partial \Omega,\\
            &\lim_{t \rightarrow 0^+}u(t, \cdot) = h_0 \text{ in } L^1(\Omega, \mu).
        \end{cases}
    \end{align*}
\end{theorem}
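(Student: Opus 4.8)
The plan is to reduce to the $L^p$-density case of \cite{GLZ21-2} by approximating the measure on the right-hand side, extracting the required \emph{uniform} a priori bounds from hypothesis \eqref{4-2-1}. By Remark 4.2 (Ko\l odziej's Theorem A \cite{Ko95} applied to \eqref{4-2-1}) we may fix $\psi\in PSH(\Omega)\cap L^\infty(\Omega)$ with $\psi\le 0$, $(dd^c\psi)^n=d\mu$, and $\lim_{z\to\partial\Omega}\psi(z)=0$. By a standard approximation (the theory of Cegrell's class $\mathcal E_0$, or Richberg smoothing combined with the Dirichlet problem on strictly pseudoconvex domains) there is a decreasing sequence $\psi_j$ of smooth strictly plurisubharmonic functions on $\overline\Omega$ with $\psi_j|_{\partial\Omega}=0$ and $\psi_j\downarrow\psi$; note $\|\psi_j\|_{L^\infty(\Omega)}\le\|\psi\|_{L^\infty(\Omega)}$ automatically. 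Put $\mu_j:=(dd^c\psi_j)^n=g_j\,dV$, so that $g_j=\det(\partial^2\psi_j/\partial z_{k}\partial\bar{z}_{l})$ is continuous and strictly positive, hence $g_j\in L^\infty(\Omega)\subset L^p(\Omega)$, and $\mu_j\to\mu$ weakly by the monotone convergence theorem for the Monge--Amp\`ere operator.

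For each $j$, the theorem of \cite{GLZ21-2} (the $L^p$-density case of the Cauchy--Dirichlet problem \eqref{CD}) produces $u_j\in\mathcal P(\Omega_T)\cap L^\infty(\Omega_T)$ solving $dt\wedge(dd^cu_j)^n=e^{\partial_tu_j+F(t,z,u_j)}\,dt\wedge\mu_j$ in $\Omega_T$ with lateral boundary value $h$ on $[0,T)\times\partial\Omega$ and $u_j(t,\cdot)\to h_0$ as $t\to0^+$. The crucial point is that the a priori estimates of \cite{GLZ21-2} for the envelope do not involve the density $g_j$: one has $\|u_j\|_{L^\infty(\Omega_T)}\le M_0$ (from below, $u_j\ge w:=\psi_j-M_h-M_Ft$, which is a parabolic subsolution of the $\mu_j$-flow with $w\le h$ on $\partial_0\Omega_T$, since $(dd^cw)^n=\mu_j\ge e^{\partial_tw+F(t,z,w)}\mu_j$ as $\partial_tw+F(t,z,w)\le-M_F+M_F=0$; from above, an $h$-barrier not involving the measure), $t\,|\partial_tu_j(t,z)|\le\kappa_0$, and $\partial_t^2u_j\le C_0t^{-2}$ in the distributional sense, with $M_0,\kappa_0,C_0$ independent of $j$. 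Thus $\{u_j\}$ is locally uniformly Lipschitz and locally uniformly semi-concave in $(0,T)$, and for any compact $J\Subset(0,T)$ the frozen elliptic measures satisfy the uniform bound $(dd^cu_j(t,\cdot))^n=e^{\partial_tu_j+F}\mu_j\le e^{\kappa_0/\inf J+\sup_{\Omega_T}F(\cdot,\cdot,M_0)}\,\|\psi\|_{L^\infty}^n\,\mathrm{cap}(\cdot,\Omega)$ for $t\in J$.

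Passing to a subsequence, $u_j\to u$ in $L^1_{\mathrm{loc}}(\Omega_T)$ with $u\in\mathcal P(\Omega_T)\cap L^\infty(\Omega_T)$. One then upgrades this to $u_j(t,\cdot)\to u(t,\cdot)$ in capacity for almost every $t\in(0,T)$, using the uniform $L^\infty$ and time-Lipschitz bounds together with the uniform domination of $(dd^cu_j(t,\cdot))^n$ by capacity and the comparison principle for the elliptic equations $(dd^cu_j(t,\cdot))^n=e^{\partial_tu_j(t,\cdot)+F}\mu_j$ (in the spirit of Ko\l odziej-type stability). Granting this, the hypotheses of Lemma 2.9, Lemma~\ref{continuity} and Lemma 3.4 are met: Lemma~\ref{continuity} gives $dt\wedge(dd^cu_j)^n\to dt\wedge(dd^cu)^n$ weakly, while Lemma 3.4 (applied with $\psi_j,\psi$, which satisfy $(dd^c\psi_j)^n\to(dd^c\psi)^n$ weakly and are uniformly bounded, together with the uniform bounds just recorded) gives $e^{\partial_tu_j+F(t,z,u_j)}\,dt\wedge(dd^c\psi_j)^n\to e^{\partial_tu+F(t,z,u)}\,dt\wedge(dd^c\psi)^n$ weakly. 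Since $(dd^c\psi_j)^n=\mu_j$ and $(dd^c\psi)^n=\mu$, passing to the limit in the equation for $u_j$ yields $dt\wedge(dd^cu)^n=e^{\partial_tu+F(t,z,u)}\,dt\wedge\mu$ in $\Omega_T$.

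Finally one verifies the boundary conditions for $u$. The lateral condition $\lim_{(t,z)\to(\tau,\zeta)}u(t,z)=h(\tau,\zeta)$ for $(\tau,\zeta)\in[0,T)\times\partial\Omega$ passes to the limit because the sub- and super-barriers of \cite{GLZ21-2} squeezing $u_j$ near $[0,T)\times\partial\Omega$ depend only on $h$, $F$ and $\|\psi\|_{L^\infty}$ — the Monge--Amp\`ere inequality being automatic for any function obtained from $\psi_j$ by adding a plurisubharmonic term — hence are uniform in $j$; the compact support of $\mu$ is used in this boundary analysis, following \cite{GLZ21-2}. The Cauchy condition $u(t,\cdot)\to h_0$ in $L^1(\Omega,\mu)$ as $t\to0^+$ is obtained by combining the lower barrier $\varphi+h_0-M_Ft$ (a subsolution of the $\mu$-flow, since $(dd^c(\varphi+h_0))^n\ge(dd^c\varphi)^n\ge\mu$, with a correction for the lateral boundary as in \cite{GLZ21-2}) with the uniform upper estimates; we only claim convergence in $L^1(\Omega,\mu)$, which — as explained in the introductory remarks and in Lemma 5.1 — cannot be improved to $L^1(\Omega,dV)$ in general. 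The step I expect to be the main obstacle is precisely the passage from $L^1_{\mathrm{loc}}$-convergence to convergence in capacity of $u_j(t,\cdot)$ for a.e.\ $t$, which is what makes Lemmas 2.9, \ref{continuity} and 3.4 applicable, hand in hand with checking that the estimates imported from \cite{GLZ21-2} are genuinely uniform in the approximating densities $g_j$; the sharp Cauchy condition in $L^1(\Omega,\mu)$ is the secondary delicate point.
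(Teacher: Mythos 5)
Your overall strategy (replace $\mu$ by approximating measures with strictly positive $L^p$ densities, solve each approximate flow by \cite{GLZ21-2}, extract uniform $L^\infty$, $t\lvert\partial_t u_j\rvert$ and $\partial_t^2u_j$ estimates, then pass to the limit via convergence in capacity and the convergence lemma of Section~3) is the same as the paper's. However, the very first step of your reduction is a genuine gap: you assert that the bounded potential $\psi$ with $(dd^c\psi)^n=d\mu$ admits a \emph{decreasing} sequence of \emph{smooth strictly plurisubharmonic} functions $\psi_j$ on $\overline\Omega$ with $\psi_j\vert_{\partial\Omega}=0$, calling this ``standard.'' It is not. Richberg-type smoothing and the Forn\ae ss--Wiegerinck approximation require continuity of $\psi$, which you do not have ($\psi$ is merely bounded), and Cegrell's approximation in $\mathcal E_0$ only yields \emph{continuous} approximants, whose Monge--Amp\`ere measures need not have any density, let alone a strictly positive $L^p$ one. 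The paper circumvents exactly this obstruction by using the compact support of $\mu$: it extends the dominating potential $\varphi$ across $\partial\Omega$ by gluing with a defining function away from $\operatorname{supp}\mu$ (so that the domination $(dd^c\varphi)^n\ge d\mu$ is untouched), regularizes by convolution to get smooth $w_j\downarrow\varphi$, writes $d\mu=f(dd^c\varphi)^n$ by Radon--Nikodym, and approximates by the densities $fg_j+1/j$ (first for $f$ continuous with compact support, removing that assumption at the end). In your scheme the compact support of $\mu$ plays no real role, which is a sign that the claimed approximation of $\psi$ is doing work it cannot do.

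Two further steps are asserted rather than proved, and they are precisely the delicate ones. First, you ``grant'' the upgrade from $L^1_{loc}$ convergence of $u_j$ to convergence in capacity of $u_j(t,\cdot)$ for a.e.\ $t$; the paper obtains this from the domination $(dd^cu_j(t,\cdot))^n\le\bigl(dd^c\bigl(e^{\kappa_0/(nt)+M_F/n}\tilde\psi_j\bigr)\bigr)^n$ with $\tilde\psi_j\to\psi$ in capacity (via Ko{\l}odziej's stability estimate), together with the uniform boundary control coming from the Dirichlet barriers, and then Xing's stability theorem \cite{Xi08}; without this the applications of Lemmas~2.6, 2.9 and 3.3 do not get off the ground. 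Second, for the Cauchy condition your lower barrier $\varphi+h_0-M_Ft$ is a subsolution of the $\mu$-flow, but you must compare it with $u_j$, i.e.\ with the $\mu_j$-flow, and $(dd^c\varphi)^n\ge d\mu$ does not dominate $(dd^c\psi_j)^n$; comparison with the limit flow is not available since no comparison principle is known for general $\mu$ (Remark~1.4). The paper instead builds the barrier from the approximating potential itself, $v_j=h_0-\varepsilon+t(\tilde\psi_j-C)+n[t\log(t/T)-t]$, and compares with $u_j$ using \cite{GLZ21-2}. Moreover the other half of the Cauchy condition, $\limsup_{t\to0^+}u(t,\cdot)\le h_0$ $\mu$-a.e., which you pass over with ``the uniform upper estimates,'' requires the inequality of \cite[Lemma 3.10]{GLZ21-2} for the approximants and the $L^2$-weak/$L^1$ convergence arguments of \cite{KN23}; this is a substantive part of the proof, not a formality.
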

    \begin{proof}
        Assume that $\text{supp}\mu \Subset U \Subset \Omega$ for some open set $U$. Let $\Omega'$ be an open neighborhood of $\overline{\Omega}$. We choose $\rho$ a defining function of $\Omega$ which is smooth on $\Omega'$. We may assume that $\rho \leq \varphi$ on $\overline{U}$. Define
        \begin{align*}
            \hat{\varphi}(z) = \begin{cases}
                \max\{\varphi(z), \rho(z)\} &\text{ for } z \in \overline{\Omega}, \\
                \rho(z) &\text{ for } z \in \Omega' \setminus \Omega.
            \end{cases}
        \end{align*}
        Note that $\hat{\varphi}$ is a plurisubharmonic function on $\Omega'$ which satisfies (\ref{4-2-1}). From now, we replace $\varphi$ by $\hat{\varphi}$. \\
        \indent Let $\eta \geq 0$ be a smooth radial function on $\mathbb{C}^n$ with a compact support on a unit ball $\mathbb{B} \subset \mathbb{C}^n$ which satisfies $\int_{\mathbb{B}}\eta dV = 1$. Let $\eta_j(z) := j^{2n}\eta(jz)$ for each $j \in \mathbb{Z}^+$. Fix $j_0 \in \mathbb{Z}^+$ satisfying $\Omega \Subset \Omega_{j_0} \Subset \Omega'$, where $\Omega_{j_0} := \{z \in \Omega' ~\mid~ d(z, \partial\Omega') > 1/j_0\}$. For each $j > j_0$, we define $w_j(z) := (\eta_j * \varphi)(z)$ on $\Omega_{j_0}$, so that $w_j \downarrow \varphi$ on $\Omega_{j_0}$ as $j \rightarrow \infty$. From now, we always pick $j > j_0$. \\
        \indent Let $g_j := \det\left(\frac{\partial ^2 w_j}{\partial z_k \partial \overline{z}_l}\right)$ so that $(dd^cw_j)^n = g_jdV$ in $\Omega_{j_0}$ for each $j$. Let $d\nu := (dd^c\varphi)^n$ in $\Omega$. By Radon-Nikodym theorem, $d\mu = fd\nu$ for some $f \in L^1(\Omega, d\nu)$ satisfying $0 \leq f \leq 1$. Let us assume that $f$ is a continuous function with compact support in $\Omega$, as in the proof of \cite[Theorem 3.1]{KN23}. We will remove the continuity assumption on $f$ at the end of the proof. Note that $fg_j \in L^p(\Omega)$ for every $p > 1$. By \cite[Theorem B]{Ko98}, there exist functions $\psi_j \in PSH(\Omega) \cap C(\overline{\Omega})$ such that
        \begin{align*}
            \begin{cases}
                &(dd^c\psi_j)^n = fg_jdV \text{ in } \Omega, \\
                &\psi_j = 0 \text{ on } \partial \Omega.
            \end{cases}
        \end{align*}
        By the proof of \cite[Theorem 4.7]{Ko05}, $\psi := (\limsup_{j \rightarrow \infty}\psi_j)^*$ satisfies
        \begin{align*}
            \begin{cases}
                &(dd^c\psi)^n = d\mu \text{ in } \Omega, \\
                &\lim_{z \rightarrow \partial\Omega}\psi(z) = 0.
            \end{cases}
        \end{align*}
        Furthermore, $\psi_j \rightarrow \psi$ as $j \rightarrow \infty$ in capacity \cite[Corollary 2.15]{Xi08}. \\
        \indent Let us consider the following Cauchy-Dirichlet problem : 
        \begin{align}\label{4-2-2}
            \begin{cases}
                &u_j \in \mathcal{P}(\Omega_T) \cap C((0, T) \times \overline{\Omega}), \\
                &dt \wedge (dd^cu_j)^n = e^{\frac{\partial u_j}{\partial t}+F(t, z, u_j)}(fg_j+1/j)dt \wedge dV \text{ in } \Omega_T, \\
                &\lim_{(t, z) \rightarrow (\tau, \zeta)}u_j(t, z) = h(\tau, \zeta) \text{ for all } (\tau, \zeta) \in [0, T) \times \partial \Omega, \\
                &\lim_{t \rightarrow 0^+}u_j(t, z) = h_0(z) \text{ for all } z \in \Omega.
            \end{cases}
        \end{align}
        Since $G_j := fg_j+1/j >0$ on $\Omega$ and $G_j \in L^p(\Omega)$ for all $p \geq 1$, (\ref{4-2-2}) is solvable by \cite[Theorem 6.4]{GLZ21-2}. Again by \cite[Theorem B]{Ko98}, there exist functions $\Tilde{\psi}_j \in PSH(\Omega) \cap C(\overline{\Omega})$ satisfying
        \begin{align*}
            \begin{cases}
                &(dd^c\Tilde{\psi}_j)^n = G_jdV \text{ in } \Omega,\\
                &\Tilde{\psi}_j = 0 \text{ on } \partial \Omega.
            \end{cases}
        \end{align*}
        By Ko{\l}odziej's stability estimate (see e.g. \cite[Proposition 5.22]{GZ17}),
        \begin{align}\label{stability}
            \lVert \psi_j - \Tilde{\psi}_j\rVert_{L^{\infty}(\Omega)} \leq (\text{diam}(\Omega))^2 \times \left(\frac{1}{j}\right)^{1/n},
        \end{align}
        which implies that $\Tilde{\psi}_j \rightarrow \psi$ in capacity as $j \rightarrow \infty$. Since $w_j \downarrow \varphi$ as $j \rightarrow \infty$,
        \begin{align}\label{Linfinitynorm}
            \varphi - \sup_{\partial \Omega}w_j \leq w_j - \sup_{\partial \Omega}w_j \leq \psi_j \leq 0
        \end{align}
        holds for all $j$ by the elliptic comparison principle (see e.g. \cite[Corollary 3.30]{GZ17}). Combining \cite[Lemma 3.6]{GLZ21-2}, (\ref{stability}) and (\ref{Linfinitynorm}),
        \begin{equation}\label{Linfinitynorm2}
            \begin{aligned}
                &e^{M_F/n}\left(\varphi - \sup_{\partial \Omega}w_j - (\text{diam}(\Omega))^2 \times \left(\frac{1}{j}\right)^{1/n}\right) + M_h \\ &\quad\leq e^{M_F/n}\Tilde{\psi}_j + M_h \leq u_j \leq M_h \text{ for all }j.
            \end{aligned}
        \end{equation}
        Since $w_j \rightarrow 0$ uniformly on $\partial \Omega$ as $j \rightarrow \infty$, (\ref{Linfinitynorm2}) implies that $\{u_j\}_{j = 1}^{\infty}$ is uniformly bounded. 
        \\
        \indent Pick a subsequence of $\{u_j\}_{j = 1}^{\infty}$ which converges in $L^1(\Omega_T)$. By an abuse of notation, we again denote it as $\{u_j\}_{j = 1}^{\infty}$. Let 
        \begin{align*}
            u(t, z) := \lim_{r \rightarrow 0}\sup_{\zeta \in B_r(z) \setminus \{z\}}(\limsup_{j \rightarrow \infty}u_j(t, \zeta)).    
        \end{align*}
        Note that since $u_j \rightarrow u$ in $L^1(\Omega_T)$, $u$ remains same even if we replace $\{u_j\}_{j = 1}^{\infty}$ by its subsequence. It follows from \cite[Theorem 4.2, Theorem 4.8]{GLZ21-2} and the uniform boundedness of $\{u_j\}_{j = 1}^{\infty}$ that there exist constants $\kappa_0 > 0$ and $C_0 > 0$ which do not depend on $j$ and satisfy
        \begin{align}\label{4-2-4}
            t\left\vert \frac{\partial u_j}{\partial t}(t, z)\right\vert \leq \kappa_0 \text{ for all } j
        \end{align}
        for all $(t, z) \in \Omega_T$ where $\frac{\partial u_j}{\partial_t}$ is well-defined and
        \begin{align}\label{4-2-5}
            \frac{\partial^2u_j}{\partial t^2}(t, z) \leq C_0t^{-2} \text{ for all j}
        \end{align}
        in the sense of distributions in $(0, T)$ for all $z \in \Omega$. 
        \\
        \indent We first show that $u$ satisfies the Dirichlet boundary condition. Here we follow the construction in the proof of \cite[Lemma 2.4]{Nc13}. Fix $\varepsilon > 0$ and $t_0 \in (0, T)$ such that
        \begin{align*}
            (dd^cu_j(t_0, \cdot))^n = e^{\frac{\partial u_j}{\partial t}(t_0, \cdot) + F(t_0, \cdot, u_j)}(dd^c\Tilde{\psi}_j)^n \text{ in } \Omega.
        \end{align*}
        We show that $\lim_{z \rightarrow \zeta}u(t_0, z) = h(t_0, \zeta)$ for all $\zeta \in \partial \Omega$. Since $h(t_0, \cdot)$ is continuous on $\partial \Omega$, there exists a function $\Tilde{h}_{t_0} \in C^2(\overline{\Omega})$ which is an approximant of the continuous extension of $h(t_0, \cdot)$ so that $\lvert h(t_0, \cdot) - \Tilde{h}_{t_0}\rvert \leq \varepsilon$ on $\partial \Omega$. Choose $A > 0$ large enough so that $\Tilde{h}_{t_0} + A\rho \in PSH(\Omega)$. Let $B = e^{(\kappa_0/t+M_F)/n}$ and $V_j(t, z) := \Tilde{h}_{t_0}(z)+A\rho(z)+B\Tilde{\psi}_j(z)$. Then
        \begin{align*}
            \begin{cases}
                &(dd^cV_j(t_0, \cdot))^n \geq B^n(dd^c\Tilde{\psi}_j)^n \geq e^{\frac{\partial u_j}{\partial t}(t_0, \cdot)+F(t_0, \cdot, u_j)}(dd^c\Tilde{\psi}_j)^n, \\
                &V_j(t_0, z) - \varepsilon \leq h(t_0, z) \text{ on } \partial \Omega. 
            \end{cases}
        \end{align*}
        By the elliptic comparison principle, $V_j(t_0, z) - \varepsilon \leq u_j(t_0, z)$ in $\Omega$. 
        \\
        \indent By (\ref{stability}), $\lVert \psi_j - \Tilde{\psi}_j\rVert_{L^{\infty}(\Omega)} \leq \varepsilon$ holds for sufficiently large $j$. By the elliptic comparison principle, $w_j - \sup_{z \in\partial \Omega}w_j(z) \leq \psi_j $ holds
        for all $j$. Hence 
        \begin{equation}\label{dirichlet}
            \begin{aligned}
                u_j(t_0, z) &\geq \Tilde{h}_{t_0}(z) + A\rho(z) + B\left(w_j(z)-\sup_{z \in \partial\Omega} w_j(z) - \varepsilon\right) -\varepsilon\\
                &\geq h(t_0, z) + A\rho(z) + B(\varphi(z) - 2\varepsilon)-2\varepsilon    
            \end{aligned}
        \end{equation}
        holds for sufficiently large $j$ in a small neighborhood of $\partial \Omega$. Therefore
        \begin{align*}
            h(t_0, z) + A\rho(z) + B\varphi(z) - (2B+2)\varepsilon \leq \liminf_{j \rightarrow \infty}u_j(t_0, z) \leq u(t_0, z)
        \end{align*}
        holds for $z$ in a small neighborhood of $\partial \Omega$. This implies that $\lim_{z \rightarrow \zeta}u(t_0, z) = h(t_0, \zeta)$ for all $\zeta \in \partial \Omega$. By  the local uniform Lipschitzness of $u$ and $h$ on $(0, T)$, $\lim_{(t, z) \rightarrow (t_0, \zeta)}u(t, z) = h(t_0, \zeta)$ holds for all $(t_0, \zeta) \in (0, T) \times \partial \Omega$. For $t_0 = 0$, we may use the uniform continuity of $h(t, z)$ on $[0, \epsilon] \times \partial \Omega$ for some small $\epsilon > 0$.
        \\
\indent Next, we prove that $dt \wedge (dd^cu)^n = e^{\frac{\partial u}{\partial t}+F(t, z, u)}dt \wedge d\mu$ in $\Omega_T$. First, we show that $dt \wedge (dd^cu_j)^n \rightarrow dt \wedge (dd^cu)^n$ weakly as $j \rightarrow \infty$. By Lemma \ref{continuity}, it suffices to show that for a.e. $t \in (0, T)$,
        \begin{align*}
            (dd^cu_j(t, \cdot))^n \rightarrow (dd^cu(t, \cdot))^n \text{ weakly as } j \rightarrow \infty.
        \end{align*}
        In fact, for a.e. $t \in (0, T)$, 
        \begin{align*}
            (dd^cu_j(t, \cdot))^n &= e^{\frac{\partial u_j}{\partial t}+F(t, z, u_j)}(fg_j+1/j)dV\\ &\leq e^{\frac{\kappa_0}{t}+M_F}(dd^c\Tilde{\psi}_j)^n = \left(dd^c\left(e^{\frac{\kappa_0}{nt}+\frac{M_F}{n}}\Tilde{\psi}_j\right)\right)^n.
        \end{align*}
        Fix such a $t$. Then $u_j(t, \cdot) \rightarrow u(t, \cdot)$ weakly as $j \rightarrow \infty$ and $e^{\frac{\kappa_0}{nt}+\frac{M_F}{n}}\Tilde{\psi}_j \rightarrow e^{\frac{\kappa_0}{nt}+\frac{M_F}{n}}\Tilde{\psi}$ in capacity as $j \rightarrow \infty$. Moreover, note that $A$ and $B$ in (\ref{dirichlet}) are independent with $j$, which implies that for fixed $t_0 \in (0, T)$, $\liminf_{z \rightarrow \partial\Omega}(u_j(t_0, z)-u(t_0, z)) \geq 0$ uniformly for all $j$. Therefore by \cite[Theorem 2.14]{Xi08}, $u_j(t, \cdot) \rightarrow u(t, \cdot)$ in capacity as $j \rightarrow \infty$. \\
        \indent Next, we show that $e^{\frac{\partial u}{\partial t}+F(t, z, u_j)}dt \wedge (dd^c\Tilde{\psi}_j)^n \rightarrow e^{\frac{\partial u}{\partial t}+F(t, z, u)}dt \wedge (dd^c\psi)^n$ weakly as $j \rightarrow \infty$. We have already shown that $u_j(t, \cdot) \rightarrow u(t, \cdot)$ as $j \rightarrow \infty$ in capacity. Thus it follows from (\ref{4-2-4}), (\ref{4-2-5}) and Lemma $3.3$ that
\begin{align*}
    e^{\frac{\partial u_j}{\partial t}+F(t, z, u_j)}dt \wedge (dd^c\Tilde{\psi}_j)^n \rightarrow e^{\frac{\partial u}{\partial t}+F(t, z, u)}dt \wedge (dd^c\psi)^n \text{ weakly}
\end{align*}
as $j \rightarrow \infty$. \\
 \indent Lastly, we need to show that $u$ satisfies the Cauchy boundary condition in $L^1(\Omega, d\mu)$. Fix $t_1 \in (0, T)$ and $\varepsilon > 0$. Since $h$ is uniformly continuous in $[0, t_1] \times \partial \Omega$, there exists $t_2 \in (0, t_1)$ such that
\begin{align*}
    h(0, z) \leq h(t, z) + \varepsilon \text{ for all } (t, z) \in [0, t_2] \times \partial \Omega.
\end{align*}
Following the construction in \cite[Lemma 3.8]{GLZ21-2}, we define
\begin{align*}
    v_j(t, z) := h_0(z) - \varepsilon + t(\Tilde{\psi}_j(z) - C) + n[t\log(t/T) - t]
\end{align*}
where $C := M_F - \min(n\log T, 0)$. Then 
\begin{align*}
    dt \wedge (dd^cv_j)^n \geq t^n dt \wedge (dd^c\Tilde{\psi}_j)^n \geq e^{\frac{\partial v_j}{\partial t}+F(t, z, v_j)}dt \wedge (dd^c\Tilde{\psi}_j)^n
\end{align*}
and $v_j^* \leq h$ on $\partial_0\Omega_{t_1}$, where $\Omega_{t_1} = (0, t_1) \times \Omega$. By \cite[Proposition 4.1]{GLZ21-2}, 
\begin{align*}
    v_j(t, z) \leq u_j(t, z) \text{ on } \Omega_{t_1}.
\end{align*}
Since $\Tilde{\psi}_j$ is uniformly bounded, 
\begin{align*}
    h_0(z) - \varepsilon + t(\lVert \Tilde{\psi}_j\rVert_{L^{\infty}(\Omega_T)} - C) + n[t\log(t/T) - t] \leq v_j(t, z) \leq u(t, z),
\end{align*}
so that $h_0(z) - \varepsilon \leq \lim_{t \rightarrow 0^+}u(t, z)$. Since this holds for every $\varepsilon > 0$, it implies that $h_0(z) \leq \lim_{t \rightarrow 0^+}u(t, z)$. \\
\indent Next, fix an open set $U \Subset \Omega$ and let $\chi$ be a positive continuous test function on $\Omega$. Since $\{u_j\}_{j = 1}^{\infty}$ is uniformly bounded, $\int_U (dd^cu_j(t, \cdot))^n \leq C$ for every $t \in [0, T)$, for some constant $C > 0$ which does not depend on $j$. By \cite[Lemma 3.10]{GLZ21-2}, there exists a uniform constant $A > 0$ such that
\begin{align*}
    \int_U \chi u_j(t, \cdot)(dd^c\Tilde{\psi}_j)^n \leq \int_U \chi h_0(dd^c\Tilde{\psi}_j)^n + At
\end{align*}
holds for all $t \in (0, T)$ and for all $j$. Letting $j \rightarrow \infty$, 
\begin{align}\label{4-2-21}
    \int_U \chi u(t, \cdot)(dd^c\psi)^n \leq \int_U \chi h_0(dd^c\psi)^n + At 
\end{align}
for a.e. $t \in (0, T)$. Indeed, for a.e. $t \in (0, T)$, $u_j(t, \cdot) \rightarrow u(t, \cdot)$ in capacity and $\tilde
\psi_j \rightarrow \psi$ also in capacity as $j \rightarrow \infty$. Therefore by \cite[Theorem 4.26]{GZ17}, we have (\ref{4-2-21}).\\
\indent Since $u(t, z)$ is bounded, there exist a sequence of positive numbers $\{t_k\}_{k = 1}^{\infty}$ and a function $w \in PSH(\Omega)$ such that $t_k \rightarrow 0$ as $k \rightarrow \infty$ and for a.e. $z \in \Omega$ with respect to the Lebesgue measure, $u(t_k, z) \rightarrow w(z)$ as $k \rightarrow \infty$. One will show that for $\mu$-a.e. $z \in U$, $u(t_k, z) \rightarrow w(z)$ as $k \rightarrow \infty$. Indeed, we have $\sup_{k}\lVert u(t_k, \cdot)\rVert_{L^2(U, (dd^c\psi)^n)} < \infty$. Hence there exists a function $W \in L^2(U, (dd^c\psi)^n)$ such that by passing to a subsequence, $u(t_k, \cdot) \rightarrow W$ weakly as $k \rightarrow \infty$. Then by \cite[Lemma 2.1]{KN23}, 
\begin{align*}
    \lim_{k \rightarrow \infty}\int_U  u(t_{k}, \cdot)(dd^c\psi)^n = \int_U W(dd^c\psi)^n = \int_U  w(dd^c\psi)^n
\end{align*}
again by passing to a subsequence. Let us denote this subsequence by $\{t_k\}_{k = 1}^{\infty}$. Then since $\psi$ is bounded,
\begin{align}\label{l1convergence}
    \lim_{k \rightarrow \infty}\int_U \lvert u(t_k, \cdot) - w\rvert (dd^c\psi)^n = 0
\end{align}
by \cite[Corollary 2.2]{KN23}. Combining (\ref{4-2-21}) and (\ref{l1convergence}), we have $w(z) \leq h_0(z)$ for $\mu$-a.e. $z \in U$. Since $U$ is chosen arbitrarily, $w \leq h_0$ $\mu$-a.e. in $\Omega$. Therefore $\lim_{t \rightarrow 0^+}u(t, \cdot) = h_0$ in $L^1(\Omega, d\mu)$.\\
\indent Finally, we remove the continuity assumption on $f$. Since the Radon-Nikodym derivative $f$ is in $L^1(d\nu)$, there exists a sequence $\{f_j\}_{j = 1}^{\infty}$ of functions in $C_c(\Omega)$ such that
\begin{align*}
    f_j(dd^c\varphi)^n \rightarrow f(dd^c\varphi)^n \text{ weakly }
\end{align*}
as $j \rightarrow \infty$. For each $j$, we get a solution $\Tilde{u}_j \in \mathcal{P}(\Omega_T) \cap L^{\infty}(\Omega_T)$ for the Cauchy-Dirichlet problem 
\begin{align*}
    \begin{cases}
        &dt \wedge (dd^c\Tilde{u}_j)^n = e^{\frac{\partial \Tilde{u}_j}{\partial t}+F(t, z, \Tilde{u}_j)}f_jdt \wedge (dd^c\varphi)^n, \\
        &\lim_{(t, z) \rightarrow (\tau, \zeta)}\Tilde{u}_j(t, z) = h(\tau, \zeta) \text{ for all } (\tau, \zeta) \in [0, T) \times \partial \Omega, \\
        &\lim_{t \rightarrow 0^+}u(t, \cdot) = h_0 \text{ in } L^1(\Omega, f_j(dd^c\varphi)^n),
    \end{cases}
\end{align*} by the above arguments. Note that for each $j$, there exist $\phi_j, \phi \in PSH(\Omega) \cap L^{\infty}(\Omega)$ satisfying
\begin{align*}
    \begin{cases}
        &(dd^c\phi_j)^n = f_j(dd^c\varphi)^n  \text{ in } \Omega,\\
        &\lim_{z \rightarrow \partial\Omega}\phi_j(z) = 0,
    \end{cases}
\end{align*}
and 
\begin{align*}
    \begin{cases}
        &(dd^c\phi)^n = f(dd^c\varphi)^n = d\mu \text{ in } \Omega,\\
        &\lim_{z \rightarrow \partial \Omega}\phi(z) = 0,
    \end{cases}
\end{align*}
respectively. Since $(dd^c\phi_j)^n \leq (dd^c\varphi)^n$ for all $j$, $\varphi \leq \phi_j \leq 0$ and $\phi_j \rightarrow \phi$ in capacity as $j \rightarrow \infty$. Hence we may repeat all the arguments again to show that by passing to a subsequence if necessary, the limit of the sequence $\{\Tilde{u}_j\}_{j = 1}^{\infty}$ converges to the desired solution. The local uniform Lipschitzness and the local uniform semi-concavity of the family $\{\Tilde{u}_j\}_{j = 1}^{\infty}$ on $(0, T)$ follows from the construction of each $\Tilde{u}_j$.
\end{proof}
\indent Combining Lemma $4.1$ and Theorem $4.3$, we get the following corollary which is the parabolic version of \cite[Theorem A]{Ko95} for compactly supported measures on the right-hand side.
\begin{corollary}
    Assume that $\mu$ is compactly supported in $\Omega$. Assume that there exists a function $v \in \mathcal{P}(\Omega_T) \cap L^{\infty}(\Omega_T)$ satisfying
    \begin{align*}
        \begin{cases}
            &dt \wedge (dd^cv)^n \geq e^{\frac{\partial v}{\partial t}+F(t, z, v)}dt \wedge d\mu \text{ in } \Omega_T, \\
            &\lim_{(t, z) \rightarrow (\tau, \zeta)}v(t, z) = h(\tau, \zeta) \text{ for all } (\tau, \zeta) \in (0, T) \times \partial \Omega.
        \end{cases}
    \end{align*}
    Then there exists a function $u \in \mathcal{P}(\Omega_T) \cap L^{\infty}(\Omega_T)$ satisfying
    \begin{align*}
        \begin{cases}
            &dt \wedge (dd^cu)^n = e^{\frac{\partial u}{\partial t}+F(t, z, u)}dt \wedge d\mu \text{ in } \Omega_T, \\
            &\lim_{(t, z) \rightarrow (\tau, \zeta)}u(t, z) = h(\tau, \zeta) \text{ for all } (\tau, \zeta) \in [0, T) \times \partial \Omega, \\
            &\lim_{t \rightarrow 0^+}u(t, \cdot) = h_0 \text{ in } L^1(\Omega, \mu).
        \end{cases}
    \end{align*}
\end{corollary}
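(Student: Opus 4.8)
The plan is to obtain the statement as an immediate combination of Lemma $4.1$ and Theorem $4.3$, so the only real task is to verify that the hypotheses line up. First I would note that, by the standing assumptions on the Cauchy--Dirichlet boundary data, $h\mid_{[0,T)\times\partial\Omega}$ is continuous; in particular $h_1 := h\mid_{(0,T)\times\partial\Omega}$ belongs to $C((0,T)\times\partial\Omega)$, which is exactly the regularity demanded by Lemma $4.1$. The function $v$ furnished by the hypothesis of the corollary lies in $\mathcal{P}(\Omega_T)\cap L^{\infty}(\Omega_T)$, satisfies the subsolution inequality $dt\wedge(dd^cv)^n \geq e^{\frac{\partial v}{\partial t}+F(t,z,v)}dt\wedge d\mu$ in $\Omega_T$, and has boundary values $\lim_{(t,z)\to(\tau,\zeta)}v(t,z) = h(\tau,\zeta) = h_1(\tau,\zeta)$ for $(\tau,\zeta)\in(0,T)\times\partial\Omega$, so $v$ is admissible in Lemma $4.1$.

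Since $\mu$ is compactly supported in $\Omega$, Lemma $4.1$ applies in its ``$\mu$ compactly supported'' branch and produces a function $\varphi \in PSH(\Omega)\cap L^{\infty}(\Omega)$ with $(dd^c\varphi)^n \geq d\mu$ in $\Omega$ and $\lim_{z\to\partial\Omega}\varphi(z) = 0$. This is precisely the input required by Theorem $4.3$, and $\mu$ is compactly supported, so Theorem $4.3$ yields $u \in \mathcal{P}(\Omega_T)\cap L^{\infty}(\Omega_T)$ with $dt\wedge(dd^cu)^n = e^{\frac{\partial u}{\partial t}+F(t,z,u)}dt\wedge d\mu$ in $\Omega_T$, with $\lim_{(t,z)\to(\tau,\zeta)}u(t,z) = h(\tau,\zeta)$ for all $(\tau,\zeta)\in[0,T)\times\partial\Omega$, and with $\lim_{t\to0^+}u(t,\cdot) = h_0$ in $L^1(\Omega,\mu)$. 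This is the desired conclusion.

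At this level there is no substantial obstacle: all the analytic content has already been expended, in Lemma $4.1$ (passing from a parabolic subsolution to a stationary bounded subsolution $\varphi$ by freezing a suitable time slice, rescaling, and then closing $\varphi$ up near $\partial\Omega$, which is where compact support of $\mu$ is used) and in Theorem $4.3$ (the regularization $fg_j+1/j$, the capacity-convergence lemmas of Section $3$, and the barrier constructions enforcing the Dirichlet and Cauchy conditions). The single point deserving care is the bookkeeping of the compact-support hypothesis, which is invoked twice --- once inside Lemma $4.1$ to make $\varphi$ vanish on $\partial\Omega$, and once as a standing hypothesis of Theorem $4.3$ --- together with the observation that the continuity of $h\mid_{[0,T)\times\partial\Omega}$ is exactly what guarantees $h_1\in C((0,T)\times\partial\Omega)$, so that Lemma $4.1$ is literally applicable.
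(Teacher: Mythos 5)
Your proposal is correct and follows exactly the paper's route: the corollary is stated as an immediate combination of Lemma $4.1$ (using the compact support of $\mu$ and the continuity of $h$ on $[0,T)\times\partial\Omega$ to produce the bounded subsolution $\varphi$ with zero boundary values) and Theorem $4.3$. Your hypothesis-checking matches the paper's intent, so there is nothing to add.
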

\subsection{Measures with a general support}
In the proof of the bounded subsolution theorem for the complex Monge-Amp\`ere equation \cite{Ko95, Ko05} and for the complex Hessian equation \cite{Nc13}, both authors removed the compact support assumption by using the comparison principle for the Monge-Amp\`ere equations and the Hessian equations, respectively. Even though we have a comparison principle \cite[Theorem 6.6]{GLZ21-2} for some complex Monge-Amp\`ere flows, this theorem holds only when the given measure has a strictly positive $L^p$ density function for some $p > 1$. Thus we need to use the approximating sequence of measures in the proof of Theorem 4.3 to apply this comparison principle.
\begin{theorem}
    If there exists a function $\varphi \in PSH(\Omega) \cap L^{\infty}(\Omega)$ satisfying
    \begin{align*}
        \begin{cases}
            &(dd^c\varphi)^n \geq d\mu \text{ in } \Omega, \\
            &\lim_{z \rightarrow \partial\Omega}\varphi(z) = 0,
        \end{cases}
    \end{align*}
    then there exists a function $u \in \mathcal{P}(\Omega_T) \cap L^{\infty}(\Omega_T)$ satisfying
    \begin{align*}
        \begin{cases}
            &dt \wedge (dd^cu)^n = e^{\frac{\partial u}{\partial t}+F(t, z, u)}dt \wedge d\mu \text{ in } \Omega_T, \\
            &\lim_{(t, z) \rightarrow (\tau, \zeta)}u(t, z) = h(\tau, \zeta) \text{ for all } (\tau, \zeta) \in [0, T) \times \partial \Omega, \\
            &\lim_{t \rightarrow 0^+}u(t, \cdot) = h_0 \text{ in } L^1(\Omega, \mu).
        \end{cases}
    \end{align*}
\end{theorem}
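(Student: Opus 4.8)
The plan is to reduce to the compactly supported case already settled in Theorem 4.3 by exhausting $\Omega$ and gluing the resulting solutions. By the Radon--Nikodym theorem write $d\mu=f(dd^c\varphi)^n$ with a Borel function $0\le f\le1$. Fix an exhaustion $\Omega_1\Subset\Omega_2\Subset\cdots$ of $\Omega$ and cut-offs $\chi_k\in C_c(\Omega)$ with $0\le\chi_k\le\chi_{k+1}\le1$, $\operatorname{supp}\chi_k\Subset\Omega$ and $\chi_k\uparrow1$ pointwise, and set $d\mu_k:=\chi_kf(dd^c\varphi)^n$. Each $\mu_k$ is compactly supported, $d\mu_k\le d\mu\le(dd^c\varphi)^n$, and $d\mu_k\uparrow d\mu$, so $d\mu_k\to d\mu$ weakly. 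By Theorem 4.3 (applied with the same $\varphi$) there are $u_k\in\mathcal P(\Omega_T)\cap L^\infty(\Omega_T)$ with
\[
dt\wedge(dd^cu_k)^n=e^{\frac{\partial u_k}{\partial t}+F(t,z,u_k)}dt\wedge d\mu_k,
\]
$u_k\to h$ on $[0,T)\times\partial\Omega$, and $\lim_{t\to0^+}u_k(t,\cdot)=h_0$ in $L^1(\Omega,\mu_k)$. By \cite[Theorem A]{Ko95} pick $\psi_k,\psi\in PSH(\Omega)\cap L^\infty(\Omega)$, vanishing on $\partial\Omega$, with $(dd^c\psi_k)^n=d\mu_k$ and $(dd^c\psi)^n=d\mu$; the elliptic comparison principle gives $\varphi\le\psi_k\le0$, so $\{\psi_k\}$ is uniformly bounded and $(dd^c\psi_k)^n\to(dd^c\psi)^n$ weakly. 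Theorem 4.3 also yields the uniform two-sided bound $e^{M_F/n}\varphi+M_h\le u_k\le M_h$ (since $\varphi$ is fixed), and hence, by \cite[Theorems 4.2, 4.8]{GLZ21-2}, constants $\kappa_0,C_0>0$ independent of $k$ with $t\left\vert\frac{\partial u_k}{\partial t}\right\vert\le\kappa_0$ on the set where $\frac{\partial u_k}{\partial t}$ exists, and $\frac{\partial^2 u_k}{\partial t^2}\le C_0t^{-2}$ in the distributional sense.

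Next I would show that $\{u_k\}$ is decreasing. Since $d\mu_k\le d\mu_{k+1}$, $u_{k+1}$ is a supersolution of the flow driven by $d\mu_k$, and if $\mu_k$ had a strictly positive $L^p$ density the comparison principle \cite[Theorem 6.6]{GLZ21-2} would give $u_{k+1}\le u_k$. To bypass the lack of such a density I would re-enter the construction of Theorem 4.3: there each $u_k$ arises as an (iterated) limit of solutions $u_{k,\bullet}$ of the flow with strictly positive $L^p$ densities $G_{k,\bullet}$, and since $\operatorname{supp}\chi_k\Subset\Omega$ one may choose the regularizing data so that on $\operatorname{supp}\chi_k$ the relevant Jacobians coincide with those of a common mollification of $\varphi$; this makes the densities nested, $G_{k,\bullet}\le G_{k+1,\bullet}$, so $u_{k,\bullet}$ is a subsolution of the flow with the strictly positive $L^p$ density $G_{k+1,\bullet}$ and \cite[Theorem 6.6]{GLZ21-2} gives $u_{k,\bullet}\ge u_{k+1,\bullet}$; a diagonal passage to the limit preserves $u_k\ge u_{k+1}$. (That $f$ is only Borel is absorbed by a further common approximation by continuous densities, as in the last step of the proof of Theorem 4.3.) Set $u:=\lim_k u_k\in\mathcal P(\Omega_T)\cap L^\infty(\Omega_T)$.

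Passing to the limit: for every $t$, $u_k(t,\cdot)\downarrow u(t,\cdot)$ are uniformly bounded plurisubharmonic functions, so $u_k(t,\cdot)\to u(t,\cdot)$ in capacity, whence $dt\wedge(dd^cu_k)^n\to dt\wedge(dd^cu)^n$ weakly by Lemma~\ref{continuity}. With $\psi_k,\psi$ as above and the uniform $\kappa_0,C_0$, all hypotheses of Lemma 3.3 hold, so
\[
e^{\frac{\partial u_k}{\partial t}+F(t,z,u_k)}dt\wedge(dd^c\psi_k)^n\to e^{\frac{\partial u}{\partial t}+F(t,z,u)}dt\wedge(dd^c\psi)^n\quad\text{weakly},
\]
i.e.\ the right-hand sides converge, because $(dd^c\psi_k)^n=d\mu_k$ and $(dd^c\psi)^n=d\mu$. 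Equating the two limits gives $dt\wedge(dd^cu)^n=e^{\frac{\partial u}{\partial t}+F(t,z,u)}dt\wedge d\mu$ in $\Omega_T$. For the Dirichlet condition, $u\le u_1$ gives $\limsup_{z\to\zeta}u(t,z)\le h(t,\zeta)$, while the barrier of the proof of Theorem 4.3, whose constants do not depend on $k$, yields after $k\to\infty$ a bound $u(t_0,z)\ge h(t_0,z)+A\rho(z)+B\varphi(z)-C_1\varepsilon$ near $\partial\Omega$, hence $\liminf_{z\to\zeta}u(t_0,z)\ge h(t_0,\zeta)$; local uniform Lipschitz continuity in $t$ (and uniform continuity of $h$ near $t=0$) then give $\lim_{(t,z)\to(\tau,\zeta)}u(t,z)=h(\tau,\zeta)$ on $[0,T)\times\partial\Omega$.

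For the Cauchy condition, the subsolutions $v_k(t,z)=h_0(z)-\varepsilon+t(\psi_k(z)-C)+n[t\log(t/T)-t]$ (cf.\ \cite[Lemma 3.8]{GLZ21-2}) satisfy $v_k\le u_k$ by \cite[Proposition 4.1]{GLZ21-2}, and $\psi_k\ge\varphi$ makes the lower barrier uniform in $k$, so $\liminf_{t\to0^+}u(t,z)\ge h_0(z)$. Writing $d\mu=(dd^c\psi)^n$ and applying \cite[Lemma 3.10]{GLZ21-2} to $u$ gives, for each $U\Subset\Omega$ and positive $\chi\in C(\Omega)$, $\int_U\chi\,u(t,\cdot)(dd^c\psi)^n\le\int_U\chi\,h_0\,(dd^c\psi)^n+At$ for a.e.\ $t$; choosing $t_j\downarrow0$ with $u(t_j,\cdot)\to w$ in $L^1_{loc}$, $w\in PSH(\Omega)$, and combining with $w\ge h_0$ and \cite[Lemma 2.1, Corollary 2.2]{KN23} forces $w=h_0$ $\mu$-a.e.\ and $u(t_j,\cdot)\to h_0$ in $L^1(\Omega,\mu)$; as the limit is subsequence-independent, $\lim_{t\to0^+}u(t,\cdot)=h_0$ in $L^1(\Omega,\mu)$. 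The hard part is the monotonicity step: because \cite[Theorem 6.6]{GLZ21-2} needs a strictly positive $L^p$ density, one must descend into the approximating families built in the proof of Theorem 4.3 and make the regularizing choices uniformly in $k$ so that the approximating measures become nested, and then carry the inequalities $u_{k,\bullet}\ge u_{k+1,\bullet}$ through a somewhat delicate diagonal limit; after that, the remaining steps are the convergence lemmas of Sections 2--3 and the barriers already used for Theorem 4.3.
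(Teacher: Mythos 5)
Your proposal is correct and follows essentially the same route as the paper: truncate $\mu$ by cutoffs to compactly supported $\mu_k=\chi_k\,d\mu$, obtain monotonicity $u_{k+1}\le u_k$ by descending into the doubly-indexed approximation of Theorem 4.3 with strictly positive $L^p$ densities built from a common mollification of $\varphi$ on the supports (so the densities are nested and \cite[Theorem 6.6]{GLZ21-2} applies there), then take the decreasing limit and pass to the limit in the equation and boundary data via the convergence lemmas and the barriers of Theorem 4.3. The only (harmless) deviation is that for the right-hand side you invoke Lemma 3.3 with $\psi_k\to\psi$, whereas the paper cites \cite[Proposition 2.9]{GLZ21-2} using that all $\mu_k$ share the base measure $\mu$; both work given the uniform bounds on $t\,\partial_t u_k$ and $\partial_t^2 u_k$.
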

\begin{proof}
    By \cite[Theorem A]{Ko95}, there exists a function $\psi \in PSH(\Omega) \cap L^{\infty}(\Omega)$ satisfying
    \begin{align*}
        \begin{cases}
            &(dd^c\psi)^n = d\mu \text{ in } \Omega, \\
            &\lim_{z \rightarrow \partial \Omega}\psi(z) = 0.
        \end{cases}
    \end{align*}
    Let $\{\chi_k\}_{k = 1}^{\infty}$ be a sequence of smooth cut-off functions on $\Omega$ such that $\chi_k \uparrow 1$ as $k \rightarrow \infty$. Let $d\mu_k := \chi_kd\mu$. Since $(dd^c\varphi)^n \geq d\mu_k$ for all $k$, again by \cite[Theorem A]{Ko95}, there exists a sequence $\{\psi_k\}_{k = 1}^{\infty}$ of functions in $PSH(\Omega) \cap L^{\infty}(\Omega)$ such that
    \begin{align*}
        \begin{cases}
            &(dd^c\psi_k)^n = d\mu_k \text{ in } \Omega, \\
            &\lim_{z \rightarrow \partial \Omega}\psi_k(z) = 0.
        \end{cases}
    \end{align*}
    Since each $d\mu_k$ is compactly supported, we proceed as in the proof of Theorem $4.3$. \\
    \indent Let $\{U_k\}_{k = 1}^{\infty}$ be a sequence of open subsets of $\Omega$ such that
    \begin{align*}
        supp(\chi_k) \Subset U_k \Subset supp(\chi_{k+1}) \Subset U_{k+1} \Subset \cdots \Subset \Omega.
    \end{align*}
    Let $\Omega'$ be an open neighborhood of $\overline{\Omega}$. For each $k$, we choose $\rho_k$ a defining function of $\Omega$ which is smooth on $\Omega'$ and $\rho_k \leq \varphi$ on $\overline{U_k}$. Define
    \begin{align*}
        \hat{\varphi}_k(z) = \begin{cases}
            \max\{\varphi(z), \rho_k(z)\} &\text{ for } z \in \overline{\Omega}, \\
            \rho_k(z) &\text{ for } z \in \Omega' \setminus \Omega.
        \end{cases}
    \end{align*}
    \indent Let $\eta$ be the smooth radial function on $\mathbb{C}^n$ as defined in the proof of Theorem $4.3$. Fix $j_0 \in \mathbb{Z}^+$ satisfying $\Omega \Subset \Omega_{j_0} \Subset \Omega'$, where $\Omega_{j_0} = \{z \in \Omega' \mid d(z, \partial\Omega') > 1/j_0\}$. For each $j$ and $k$, let $w_{k, j}(z) := (\eta_j * \hat{\varphi}_k)(z)$ so that $w_{k, j} \downarrow \hat{\varphi}_k$ on $\Omega_{j_0}$ as $j \rightarrow \infty$. \\
    \indent Let $g_{k, j} := \det\left(\frac{\partial^2w_{k, j}}{\partial z_s \partial\overline{z}_{\ell}}\right)$ so that $(dd^cw_{k, j})^n = g_{k, j}dV$ in $\Omega_{j_0}$ for each $j$. By the Radon-Nikodym theorem, $d\mu = f(dd^c\varphi)^n$ for some $f \in L^1(\Omega, (dd^c\varphi)^n)$ with $ 0 \leq f \leq 1$. As in the proof of Theorem $4.3$, we assume that $f \in C_c(\Omega)$. Let us consider the following Cauchy-Dirichlet problem:
    \begin{align}\label{4-3-1}
        \begin{cases}
            &u_{k, j} \in \mathcal{P}(\Omega_T) \cap C((0, T) \times \overline{\Omega}), \\
            &dt \wedge (dd^cu_{k, j})^n = e^{\frac{\partial u_{k, j}}{\partial t} + F(t, z, u_{k, j})}(\chi_kfg_{k, j}+1/j)dt \wedge dV \text{ in } \Omega_T, \\
            &\lim_{(t, z) \rightarrow (\tau, \zeta)}u_{k, j}(t, z) = h(\tau, \zeta) \text{ for all } (\tau, \zeta) \in [0, T) \times \partial \Omega, \\
            &\lim_{t \rightarrow 0^+}u_{k, j}(t, z) = h_0(z) \text{ for all } z \in \Omega.
        \end{cases}
    \end{align}
    By \cite[Theorem 6.4]{GLZ21-2}, (\ref{4-3-1}) is solvable. For sufficiently large $j$, 
    \begin{align*}
        \chi_{k}fg_{k, j}dV &= \chi_{k}f\det\left(\frac{\partial^2 w_{k, j}}{\partial z_s \partial \overline{z}_{\ell}}\right)dV \\
        &= \chi_kf \det\left(\frac{\partial^2(\eta_j * \hat{\varphi}_k)}{\partial z_s \partial \overline{z}_{\ell}}\right)dV \\
        &= \chi_k f\det\left(\frac{\partial^2(\eta_j * \varphi)}{\partial z_s \partial \overline{z}_{\ell}}\right)dV
    \end{align*}
    since $\hat{\varphi}_k = \varphi$ on $U_k \supset supp(\mu_k)$. Therefore if $k_1 \leq k_2$, for sufficiently large $j$, 
    \begin{align*}
        \chi_{k_1}fg_{k_1, j}dV &= \chi_{k_1}f\det\left(\frac{\partial^2(\eta_j * \varphi)}{\partial z_s \partial \overline{z}_{\ell}}\right)dV \\
        &\leq \chi_{k_2}f\det\left(\frac{\partial^2(\eta_j * \varphi)}{\partial z_s \partial \overline{z}_{\ell}}\right) = \chi_{k_2}fg_{k_2, j}dV.
    \end{align*}
    Hence by \cite[Theorem 6.6]{GLZ21-2}, $u_{k_1, j} \geq u_{k_2, j}$. Fix $k_1$. By Theorem $4.3$, there exists a sequence $\{u_{k_1, j_i}\}_{i =1}^{\infty}$ such that
    \begin{align*}
        u_{k_1}(t, z) := \lim_{r \rightarrow 0}\sup_{\zeta \in B_r(z) \setminus \{z\}}(\limsup_{i \rightarrow \infty}u_{k_1, j_i}(t, \zeta))
    \end{align*}
    satisfies
    \begin{align*}
        \begin{cases}
            &u_{k_1} \in \mathcal{P}(\Omega_T) \cap L^{\infty}(\Omega_T), \\
            &dt \wedge (dd^cu_{k_1})^n = e^{\frac{\partial u_{k_1}}{\partial t}+F(t, z, u_{k_1})}dt \wedge d\mu_{k_1} \text{ in } \Omega_T, \\
            &\lim_{(t, z) \rightarrow (\tau, \zeta)}u_{k_1}(t, z) = h(\tau, \zeta) \text{ for all } (\tau, \zeta) \in [0, T) \times \partial \Omega,\\
            &\lim_{t \rightarrow 0^+}u_{k_1}(t, \cdot) = h_0(\cdot) \text{ in } L^1(\Omega, d\mu_{k_1}).
        \end{cases}
    \end{align*}
    Pick $k_2 \geq k_1$. Since $\{u_{k_2, j_i}\}_{i = 1}^{\infty}$ is uniformly bounded, by passing to a subsequence, $u_{k_2, j_i}$ converges to some function in $L^1(\Omega_T)$. Let us denote this subsequence by $\{u_{k_2, s_i}\}_{i = 1}^{\infty}$. Let $u_{k_2}(t,z) := \lim_{r \rightarrow 0}\sup_{\zeta \in B_r(z) \setminus \{z\}}(\limsup_{i \rightarrow \infty} u_{k_2, s_i}(t, \zeta))$. Again by Theorem $4.3$, $u_{k_2}$ satisfies
    \begin{align*}
        \begin{cases}
            &u_{k_2} \in \mathcal{P}(\Omega_T) \cap L^{\infty}(\Omega_T), \\
            &dt \wedge (dd^cu_{k_2})^n = e^{\frac{\partial u_{k_2}}{\partial t}+F(t, z, u_{k_2})}dt \wedge d\mu_{k_2} \text{ in } \Omega_T, \\
            &\lim_{(t, z) \rightarrow (\tau, \zeta)}u_{k_2}(t, z) = h(\tau, \zeta) \text{ for all } (\tau, \zeta) \in [0, T) \times \partial \Omega,\\
            &\lim_{t \rightarrow 0^+}u_{k_2}(t, \cdot) = h_0(\cdot) \text{ in } L^1(\Omega, d\mu_{k_2}).
        \end{cases}
    \end{align*}
    Since $u_{k_2, s_i} \leq u_{k_1, s_i}$ for each $i$, $u_{k_2} \leq u_{k_1}$. By repeating this argument, we get a sequence $\{u_{k_i}\}_{i = 1}^{\infty}$ such that $u_{k_i}$ satisfies
    \begin{align*}
        \begin{cases}
            &u_{k_i} \in \mathcal{P}(\Omega_T) \cap L^{\infty}(\Omega_T), \\
            &dt \wedge (dd^cu_{k_i})^n = e^{\frac{\partial u_{k_i}}{\partial t}+F(t, z, u_{k_i})}dt \wedge d\mu_{k_i} \text{ in } \Omega_T, \\
            &\lim_{(t, z) \rightarrow (\tau, \zeta)}u_{k_i}(t, z) = h(\tau, \zeta) \text{ for all } (\tau, \zeta) \in [0, T) \times \partial \Omega,\\
            &\lim_{t \rightarrow 0^+}u_{k_2}(t, \cdot) = h_0(\cdot) \text{ in } L^1(\Omega, d\mu_{k_i}),
        \end{cases}
    \end{align*}
    for each $i$. Moreover, $\{u_{k_i}\}_{i = 1}^{\infty}$ is a decreasing sequence. For simplicity, we denote this sequence by $\{u_k\}_{k = 1}^{\infty}$. By the uniform boundedness of $\{\psi_k\}_{k = 1}^{\infty}$, $\{u_k\}_{k = 1}^{\infty}$ is also uniformly bounded. Therefore $u_k \downarrow u$ for some $u \in \mathcal{P}(\Omega_T) \cap L^{\infty}(\Omega_T)$ and by Lemma $2.3$,
    \begin{align*}
        dt \wedge (dd^cu_k)^n \rightarrow dt \wedge (dd^cu)^n \text{ weakly as } k \rightarrow \infty.
    \end{align*}
    \indent By using the similar argument of the proof of Theorem $4.3$, one can show that $u$ satisfies all the boundary conditions. By \cite[Proposition 2.9]{GLZ21-2},  
    \begin{align*}
        e^{\frac{\partial u_k}{\partial t}+F(t, z, u_k)}dt \wedge d\mu_k \rightarrow e^{\frac{\partial u}{\partial t}+F(t, z ,u)}dt \wedge d\mu \text{ weakly as } k \rightarrow \infty.
    \end{align*}
    \indent Finally, let us consider the case when $f \in L^1(\Omega, (dd^c\varphi)^n)$. As we have done in the last part of the proof for Theorem 4.3, one can find a sequence $\{f_j\}_{j = 1}^{\infty}$ of functions in $C_c(\Omega)$ such that
    \begin{align*}
        f_j(dd^c\varphi)^n \rightarrow f(dd^c\varphi)^n \text{ weakly as } j \rightarrow \infty. 
    \end{align*}
    By arguing as in the final part of the proof of Theorem 4.3, we get the desired solution.
\end{proof}
\section{Cauchy Boundary Condition}
\subsection{Counterexample} We construct a counterexample which shows that the Cauchy-Dirichlet problem with the pointwise Cauchy boundary condition is not solvable in general.
\begin{lemma}
    The Cauchy-Dirichlet problem (\ref{CD}) with the boundary condition that 
    \begin{align*}
        \lim_{t \rightarrow 0^+}u(t, z) = h_0(z) \text{ for all } z \in \Omega
    \end{align*}
    is not solvable in general.
\end{lemma}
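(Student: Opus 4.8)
The plan is to exhibit an explicit instance of the data for which (\ref{CD}) is solvable (by Theorem $4.5$) while the same problem with the pointwise Cauchy condition is not. Take $\Omega = \mathbb{B}$ the unit ball in $\mathbb{C}^n$, the zero measure $\mu \equiv 0$, $F(t, z, r) = r$, the Dirichlet data $h \equiv 0$ on $[0, T) \times \partial \mathbb{B}$, and the Cauchy data $h_0(z) = \lvert z\rvert^2 - 1$. First I would check that all of the standing assumptions hold: $PSH(\mathbb{B}) \subset L^1_{loc}(\mathbb{B}, \mu)$ trivially since $\mu = 0$; $F$ is smooth, strictly increasing in $r$, and affine (hence Lipschitz and semi-convex) in $(t, r)$; $h$ is bounded and continuous on $\partial_0\Omega_T$, constant in $t$ along $[0, T) \times \partial \mathbb{B}$, and $h_0$ is bounded, plurisubharmonic, and continuous up to $\partial \mathbb{B}$ with $h_0 \equiv 0$ there. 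Since $\varphi \equiv 0$ satisfies $(dd^c\varphi)^n = 0 \geq d\mu$ and $\lim_{z \to \partial\mathbb{B}}\varphi(z) = 0$, Theorem $4.5$ applies and yields a solution of (\ref{CD}); in fact $u \equiv 0$ already solves (\ref{CD}), the Cauchy condition in $L^1(\Omega, d\mu)$ being vacuous for $\mu = 0$.

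Now suppose, for contradiction, that some $u \in \mathcal{P}(\Omega_T) \cap L^{\infty}(\Omega_T)$ satisfies (CMAF) with this data, the Dirichlet condition on $[0, T) \times \partial \mathbb{B}$, and $\lim_{t \to 0^+}u(t, z) = h_0(z)$ for every $z \in \mathbb{B}$. Since $d\mu = 0$, the right-hand side of (CMAF) is the zero measure, so $dt \wedge (dd^cu)^n = 0$ in $\Omega_T$. Testing this against products $\alpha(t)\beta(z)$ with $\alpha \in C_c((0, T))$ and $\beta \in C_c(\mathbb{B})$ both non-negative, and using that $t \mapsto \int_{\mathbb{B}}\beta\,(dd^cu(t, \cdot))^n$ is non-negative and continuous by Lemma \ref{CLN}, I would conclude that $(dd^cu(t, \cdot))^n = 0$ for every $t \in (0, T)$. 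On the other hand, the Dirichlet condition $\lim_{(t, z) \to (\tau, \zeta)}u(t, z) = 0$ gives, for each fixed $t_0 \in (0, T)$, that $\lim_{z \to \zeta}u(t_0, z) = 0$ for all $\zeta \in \partial \mathbb{B}$. Thus $u(t_0, \cdot) \in PSH(\mathbb{B}) \cap L^{\infty}(\mathbb{B})$ is maximal with boundary values $0$, and the comparison principle for the complex Monge-Amp\`ere operator (e.g. \cite[Corollary 3.30]{GZ17}), applied to the pair $u(t_0, \cdot)$ and $0$ in both directions, forces $u(t_0, \cdot) \equiv 0$; as $t_0 \in (0, T)$ was arbitrary, $u \equiv 0$ on $\Omega_T$.

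This gives $\lim_{t \to 0^+}u(t, z) = 0 \neq \lvert z\rvert^2 - 1 = h_0(z)$ for every $z \in \mathbb{B} \setminus \{0\}$, contradicting the pointwise Cauchy condition. Hence (\ref{CD}) with the pointwise Cauchy condition admits no solution for this data, which proves the lemma; the same example also shows that the stronger condition (\ref{l1}) in $L^1(\Omega, dV)$ is not attainable, since $u \equiv 0$ does not converge to $h_0$ in $L^1(\mathbb{B}, dV)$. I do not expect a genuine obstacle here: the only points requiring care are checking that the chosen $F$ and $h$ meet every structural hypothesis, and the passage from $dt \wedge (dd^cu)^n = 0$ to the slicewise vanishing $(dd^cu(t, \cdot))^n = 0$, for which the continuity statement in Lemma \ref{CLN} is used.
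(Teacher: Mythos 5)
Your argument is correct and is essentially the paper's own proof: the paper likewise takes $\mu = 0$, Dirichlet data $0$ on $[0,T)\times\partial\Omega$ and a strictly plurisubharmonic defining function as Cauchy data, slices $dt\wedge(dd^cu)^n=0$ via the continuity statement in Lemma \ref{CLN} to get $(dd^cu(t,\cdot))^n=0$ for every $t$, and concludes $u\equiv 0$ by the elliptic comparison principle, contradicting the pointwise Cauchy condition. Your version merely instantiates this with $\Omega=\mathbb{B}$, $h_0(z)=\lvert z\rvert^2-1$ and $F(t,z,r)=r$ (and adds the unneeded but harmless check that the relaxed problem is solved by $u\equiv 0$), so there is no substantive difference.
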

\begin{proof}
    Let us pick $\mu = 0$ and consider the homogeneous equation:
		\begin{align}\label{homogeneous}
			\begin{cases}
				&u \in \mathcal{P}(\Omega_T) \cap L^{\infty}(\Omega_T), \\
				&dt \wedge (dd^cu)^n = 0, \\
				&\lim_{(t, z) \rightarrow (\tau, \zeta)} = h(\tau, \zeta) \text{ for all } (\tau, \zeta) \in [0, T) \times \partial \Omega, \\
				&\lim_{t \rightarrow 0^+}u(t, z) = h_0(z) \text{ for all } z \in \Omega.
			\end{cases}
		\end{align}
	If we pick $h$ a function on $\partial_0\Omega_T$ defined by
	\begin{align*}
		h(t, z) = \begin{cases}
			0 &\text{ if } (t, z) \in [0, T) \times \partial\Omega, \\
			\phi(z) &\text{ if } (t, z)  \in \{0\} \times \Omega,
		\end{cases}
	\end{align*}
	where $\phi(z)$ is a smooth defining function of $\Omega$ which is strictly plurisubharmonic, then (\ref{homogeneous}) is not solvable. Indeed, assume that $u$ satisfies (\ref{homogeneous}). Fix a positive continuous test function $\chi$ on $\Omega_T$. Then 
	\begin{align*}
		\int_{\Omega_T}\chi dt \wedge (dd^cu)^n = \int_0^Tdt \int_{\Omega}\chi(t, \cdot)(dd^cu(t, \cdot))^n = 0.
	\end{align*}
	Let $S_{\chi}(t) := \int_{\Omega}\chi(t, \cdot)(dd^cu(t, \cdot))^n$. By Lemma $2.1$, $S_{\chi}(t)$ is continuous. Therefore
	\begin{align*}
		0 \leq \int_0^T S_{\chi}(t)dt = 0,
	\end{align*}
	which implies that $S_{\chi}(t) = 0$ for all $t \in (0, T)$. Since this holds for every positive continuous test function $\chi$, $(dd^cu(t, \cdot))^n = 0$ for all $t \in (0, T)$. This implies that for all $t \in (0, T)$, $u(t, \cdot) \in PSH(\Omega) \cap L^{\infty}(\Omega)$ satisfies
	\begin{align*}
		\begin{cases}
			&(dd^cu(t, \cdot))^n = 0 \text{ in } \Omega, \\
			&\lim_{z \rightarrow \partial \Omega}u(t, z) = 0,
		\end{cases}
	\end{align*}
	therefore $u(t, \cdot) = 0$. As it holds for every $t$, $u \equiv 0$. But it contradicts to the fact that
	\begin{align*}
		\lim_{t \rightarrow 0^+}u(t, z) = h_0(z) = \phi(z) \text{ for all } z \in \Omega. 
	\end{align*}
    This completes the proof. 
\end{proof}
\subsection{Admissible class} 
    While showing the Cauchy boundary condition on Theorems $4.3$ and $4.5$, we showed that $u$ satisfies
    \begin{align*}
        h_0(z) \leq \lim_{t \rightarrow 0^+}u(t, z) \text{ for all } z \in \Omega
    \end{align*}
    and
    \begin{align*}
        \lim_{t \rightarrow 0^+}u(t, z) \leq h_0(z) \text{ for $\mu$-a.e. }z \in \Omega.
        \end{align*} 
    In \cite{GLZ21-2}, the authors obtained the same results, but they could conclude that $\lim_{t \rightarrow 0^+}u(t, z) = h_0(z)$ for all $z \in \Omega$ because $d\mu = fdV$ for some strictly positive $f \in L^p(\Omega)$ for some $p > 1$. \\
    \indent We first remark that the Cauchy boundary condition on \cite{GLZ21-2} requires some relation between $\mu$ and $h_0$. Assume that the solution $u$ that we obtained from Theorem $4.5$ (or Theorem $4.3$) satisfies $\lim_{t \rightarrow 0^+}u(t, \cdot) = h_0$ in $L^1(\Omega, dV)$. While showing that $\lim_{t \rightarrow 0^+}u(t, \cdot) = h_0$ in $L^1(\Omega, d\mu)$, we proved that for any $\varepsilon > 0$, there exists a function $v_{\epsilon} \in \mathcal{P}(\Omega_T) \cap L^{\infty}(\Omega_T)$ such that $v_{\varepsilon} \leq u$ on $(0, t_0) \times \Omega$ for some sufficiently small $t_0$ and $v_{\varepsilon} \uparrow h_0-\varepsilon$ as $t \rightarrow 0^+$. Combining this with the main result in \cite{GT23}, 
    \begin{align*}
        u(t, \cdot) \rightarrow h_0 \text{ in capacity as } t \rightarrow 0^+.
    \end{align*} 
    This convergence implies that
    \begin{align*}
        (dd^cu(t, \cdot))^n = e^{\frac{\partial u}{\partial t}+F(t, z, u)}d\mu \rightarrow (dd^ch_0)^n \text{ weakly as } t \rightarrow 0^+.
    \end{align*}
    Thus for example, for any open subset $U \subset \Omega$, if either $\mu(U) = 0$ or $\lVert e^{\frac{\partial u}{\partial t}}\rVert_{L^1(U, d\mu)} \rightarrow 0$ as $t \rightarrow 0^+$, then $\int_U (dd^ch_0)^n = 0$. Conversely, for a compact subset $K \subset \Omega$ satisfying $\int_K (dd^ch_0)^n = 0$, we have either $\mu(K) = 0$ or $\lVert e^{\frac{\partial u}{\partial t}}\rVert_{L^1(K, d\mu)} \rightarrow 0$ as $t \rightarrow 0^+$. Therefore, we need an additional assumption on the pair $(\mu, h_0)$ to solve the Cauchy-Dirichlet problem with the pointwise Cauchy boundary condition. 
    \begin{definition}
        We say that the pair $(\mu, h_0)$ of a measure $\mu$ and a function $h_0$ is \textit{admissible} if for any $u \in PSH(\Omega) \in L^{\infty}(\Omega)$ with $u \leq h_0$ in $\partial \Omega$, $u \leq h_0$ $d\mu$-a.e. in $\Omega$ implies that $u \leq h_0$ in $\Omega$.
    \end{definition}
    If $(\mu, h_0)$ is admissible, the proof of Theorem $4.3$ implies that the solution $u$ that we obtained from Theorem $4.5$ (and Theorem $4.3$) satisfies $\lim_{t \rightarrow 0^+}u(t, z) = h_0(z)$ for all $z \in \Omega$, which is the original Cauchy boundary condition in \cite{GLZ21-2}. For example, if $d\mu = fdV$ for some $f \in L^1_{loc}(\Omega)$ satisfying $f > 0$ almost everywhere with respect to the Lebesgue measure on $\Omega$, then $(\mu, h_0)$ is admissible. Similarly, if $Cd\mu \geq (dd^ch_0)^n$ for some $C > 0$, then $(\mu, h_0)$ is admissible by \cite[Corollary 3.31]{GZ17}. We lastly remark that the admissible pairs in the sense of \cite[Definition 5.6]{EGZ15} are also admissible in the above sense.
\bibliographystyle{amsalpha}
\bibliography{ref.bib}
\end{document}